\documentclass{amsart}
\usepackage[utf8]{inputenc}
\usepackage{amsmath}
\usepackage{amsfonts}
\usepackage{amssymb}
\usepackage{float}
\usepackage{tikz}
\usepackage[colorlinks=true]{hyperref}
\hypersetup{urlcolor=blue, citecolor=red}
\usepackage{hyperref}

\numberwithin{equation}{section}

\newtheorem{theorem}{Theorem}[section]
\newtheorem{corollary}[theorem]{Corollary}

\newtheorem{lemma}[theorem]{Lemma}
\newtheorem{proposition}[theorem]{Proposition}

\newtheorem{assumption}[theorem]{Assumption}

\theoremstyle{definition}

\newcommand{\R}{\mathbb{R}}

\newcommand{\N}{\mathbb{N}}
\newcommand{\C}{\mathbb{C}}
\newcommand{\T}{\mathbb{T}}
\newcommand{\Z}{\mathbb{Z}}

\newcommand {\vanish}[1]{\relax}

\DeclareFontFamily{U}{mathx}{}
\DeclareFontShape{U}{mathx}{m}{n}{<-> mathx10}{}
\DeclareSymbolFont{mathx}{U}{mathx}{m}{n}
\DeclareMathAccent{\widehat}{0}{mathx}{"70}
\DeclareMathAccent{\widecheck}{0}{mathx}{"71}

\def\Xint#1{\mathchoice
{\XXint\displaystyle\textstyle{#1}}%
{\XXint\textstyle\scriptstyle{#1}}%
{\XXint\scriptstyle\scriptscriptstyle{#1}}%
{\XXint\scriptscriptstyle\scriptscriptstyle{#1}}%
\!\int}
\def\XXint#1#2#3{{\setbox0=\hbox{$#1{#2#3}{\int}$ }
\vcenter{\hbox{$#2#3$ }}\kern-.6\wd0}}

\def\dashint{\Xint-}
\begin{document}

\title[Weighted~decoupling~inequalities]{Weighted decoupling inequalities via fractal bilinear Kakeya estimates}

\author{Robert Schippa}
\address{Mathematical Institute of the University of Bonn, Endenicher Allee 60, D-53115 Bonn, Germany}
\email{schippa@math.uni-bonn.de}

\begin{abstract}
Decoupling inequalities for the parabola are obtained for weights in physical space. The decoupling iteration relies on a fractal bilinear Kakeya estimate. We record applications to exponential sums.
\end{abstract}

\maketitle

\section{Introduction}

\subsection{Decoupling and fractal local smoothing}
Originally conceived by Wolff \cite{Wolff2000} to make progress on Sogge's local smoothing conjecture for the wave equation \cite{Sogge1991}, decoupling inequalities have become an invaluable tool in Fourier restriction theory. Bourgain--Demeter \cite{BourgainDemeter2015} obtained sharp decoupling inequalities for the paraboloid $\mathbb{P}^d = \{ (\xi, |\xi|^2) \in \R^{d+1} : \xi \in \R^d, |\xi| \leq 1 \}$. These read at the Tomas-Stein exponent $p_d = \frac{2(d+2)}{d}$ for functions with Fourier support in an $R^{-1}$-neighborhood of the paraboloid $\text{supp}(\hat{F}) \subseteq \mathcal{N}_{R^{-1}}(\mathbb{P}^d)$:
\begin{equation}
\label{eq:DecouplingParaboloid}
\| F \|_{L^{p_d}(\R^{d+1})} \lesssim_\varepsilon R^{\varepsilon} \big( \sum_{\theta: \, R^{-\frac{1}{2}}-\text{ball}} \| P_{\theta} F \|^2_{L^{p_d}(\R^{d+1})} \big)^{\frac{1}{2}}.
\end{equation}
Above the sum is over finitely overlapping $R^{-\frac{1}{2}}$-cubes $\theta$ which cover the unit ball $B_d(0,1)$ and $P_{\theta}$ denotes the Fourier projection to $\theta \times \R$. The sharp decoupling inequalities are obtained for the cone as a corollary via the Pramanik--Seeger \cite{PramanikSeeger2007} argument (see also \cite{GarrigosSeeger2009}) and local smoothing estimates for the wave equation with sharp loss of derivatives follow in a corresponding integrability range.

Another consequence of \eqref{eq:DecouplingParaboloid} is the family of sharp periodic Strichartz estimates:
\begin{equation*}
\| e^{it \Delta} P_N f \|_{L^{p_d}([0,1] \times \T^d)} \lesssim_\varepsilon N^{\varepsilon} \| f \|_{L^2(\T^d)}
\end{equation*}
with $P_N$ denoting the projection to frequencies of size $N$.
Pioneering work on periodic Strichartz estimates is due to Zygmund \cite{Zygmund1974}. Bourgain systematically studied periodic Strichartz estimates \cite{Bourgain1993A,Bourgain1993B} to show low-regularity well-posedness for nonlinear dispersive equations on periodic domains. Our results yield periodic Strichartz estimates on fractal sets of times.

Here we consider decoupling estimates for $\text{supp}(\hat{F}) \subseteq \mathcal{N}_{2^{-r}}(\mathbb{P}^1)$, $R=2^r$:
\begin{equation*}
\| F \|_{L^6(w_{\mu})} \lesssim_\varepsilon R^{s(w_{\mu}) + \varepsilon} \big( \sum_{I \in \mathbb{I}_{r/2}} \| P_{I} F \|_{L^6(w_{\mu}^I)}^2 \big)^{\frac{1}{2}}
\end{equation*}
with $w_{\mu}$, $w_{\mu}^I \geq 0$ measurable functions, which resemble product weights $\R \times \bar{I}^{R}_{\mu}$ - on a ``fractal set of times" given by $\bar{I}^R_{\mu}$. For $r \geq 0$, $\mathbb{I}_{r}$ denotes a covering of $[-1,1]$ with finitely overlapping intervals of length $2^{-r}$ and for $I \subseteq \R$, $P_I$ the Fourier projection to $I \times \R$. We consider $L^p$-norms weighted with a measurable function $w \geq 0$. For $p \in [1,\infty)$ we let
\begin{equation*}
\| F \|^p_{L^p(w)} = \int |F|^p(x) w(x) dx, \quad F: \R^d \to \C \text{ measurable}.
\end{equation*}
The modification for $p=\infty$ is as usual.

There is a large body of literature on refined decoupling and Strichartz inequalities, which have in particular come up in the context of the proof of the Schr\"odinger maximal function estimate and the Falconer distance set problem \cite{DuGuthLiZhang2018, DuZhang2019, DuGuthOuWangWilsonZhang2021,DuOuRenZhang2023,DuOuRenZhang2023B}.  The refined Strichartz estimates do not rely on a genuine fractal Kakeya estimate, but on the multilinear Kakeya estimate itself, which becomes effective when considering a uniform distribution on a fractal output set. Notably, smoothing estimates with fractal measure are already mentioned in Wolff's pioneering work \cite[Section~6]{Wolff2000}; see \cite{BeltranRoosRutarSeeger2025,LeeRoncalZhangZhao2025,RoosSeeger2023} for recent contributions. The present work was motivated by the fractal local smoothing conjecture, which was brought up again in the recent survey of Roos--Seeger \cite{RoosSeeger2025}. 

In a recent preprint Demeter--Fu \cite{DemeterFu2026} obtain an improved decoupling estimate for functions with Fourier support in Ahlfors-regular subsets of the neighborhood of the parabola. In \cite{DemeterFu2026} the decoupling iteration is supplied with an improved fractal Kakeya estimate (formulated as a bilinear restriction estimate in \cite[Theorem~1.1]{DemeterFu2026}). However, this is a spectral assumption and not an estimate with weights in physical space.
In another direction, Kim--Ko \cite{KimKo2025} obtained weighted square function estimates - more precisely, weighted wave envelope estimates, in $L^4$. Again a bilinear restriction estimate is crucially used. We contrast our work with \cite{KimKo2025} further below. Another instance of quantified decoupling iterations is given in \cite{Schippa2025Strichartz}, which obtain multilinear Strichartz estimates on frequency-dependent time intervals on tori.

\subsection{Weighted decoupling via fractal Kakeya estimates}

In this paper, we leverage the fractal bilinear Kakeya estimate through the classical multi-scale analysis due to Bourgain--Demeter \cite{BourgainDemeter2015} to obtain a decoupling estimate in $L^6$. First, we comment on product weights which are the motivating example. Let $\bar{I}^{R}_{\mu} \subseteq \R$ be a measurable set. For an interval $J_M \subseteq \R$ of length $1 \leq M \leq R$ we let 
\begin{equation}
\label{eq:BorelCondition}
\mu(J_M) = \lambda^1(J_M \cap \bar{I}^R_{\mu})
\end{equation}
denote the one-dimensional Lebesgue measure of the intersection with $\bar{I}^R_{\mu}$. 
We require the doubling condition for mild dilations $J_{2M} \supseteq J_M$:
\begin{equation}
\label{eq:MildContinuity}
D^{-1} \mu(J_{2M}) \leq \mu(J_M) \leq \mu(J_{2M})
\end{equation}
for a constant $D > 0$.

Consider the weight $w(x_1,x_2) = \chi_{\bar{I}^R_{\mu}}(x_2)$ and transversely intersecting tubes of size $M^{\frac{1}{2}} \times M$ with long side making an angle with the $x_1$-axis: $\angle(\nu(T_i),e_1) \geq 1/8$. We let $w(A)= \int_A w(x) dx$ for $A \subseteq \R^2$ measurable.

In the fractal Kakeya estimate the quantity
\begin{equation}
\label{eq:kappa}
\kappa_M = \sup_{J_{M^{1/2}} \subseteq J_M: \text{ intervals} } \frac{M^{1/2} \mu(J_{M^{1/2}})}{\mu(J_M)}
\end{equation}
arises. Let $R \geq 1$. Already in the proof of weighted square function estimates parabolically rescaled quantities have to be considered. For $\nu \in [R^{-1/2},1] \cap 2^{\Z}$, let $\mu_{\nu^2}(J_M) = \mu(\nu^{-2} J_M)$. Correspondingly, we let for $1 \leq M \leq \nu R^{\frac{1}{2}}$ 
\begin{equation}
\label{eq:kappaLambda}
\kappa_M^{\nu} = \sup_{J_{M^{\frac{1}{2}}} \subseteq J_M: \text{ intervals} } \frac{M^{1/2} \mu_{\nu^2}(J_{M^{1/2}})}{\mu_{\nu^2}(J_M)}.
\end{equation}
For $A \geq 2$ let $[A]_d = \{ A^{\frac{1}{2^s}} : s \geq 0
 \} \cap [2,\infty) $ denote the double-dyadic intermediate scales between $2$ and $A$ and for $1 \leq A < 2$ let $[A]_d = \emptyset$. The double-dyadic scales appear naturally in the classical decoupling iteration.

Let $\bar{\kappa}^\nu = \prod_{M \in [\nu R^{1/2}]_d} \kappa_M^{\nu}$ for $[\nu R^{\frac{1}{2}}]_d \neq \emptyset$ and $\bar{\kappa}^{\nu} = 1$, else. We consider weights given as convex combinations
\begin{equation*}
\tilde{w}_{\mu}^{r,\nu,J} = \sum_{k \geq 0} a_k(\nu,J) w(\cdot + v_k(\nu,J)).
\end{equation*}
These weights can be made explicit in the square function iteration through parabolic rescaling and Galilean transformation. Since the applications use only the convex-combination structure, this is presently omitted.
For a measurable set $S$, $\chi_S$ denotes its indicator function. 

Our first result is a local $L^4$-square function estimate for product weights.
\begin{theorem}
\label{thm:FractalSquareFunction}
Let $R=2^r \geq 2$, $\bar{I}^{\mu}_R \subseteq \R$ be a measurable set such that $\mu$ defined as in \eqref{eq:BorelCondition} satisfies \eqref{eq:MildContinuity}. Set $w = \chi_{\R} \times \chi_{\bar{I}^R_{\mu}} \geq 0$.

 For $\text{supp}(\hat{F}) \subseteq \mathcal{N}_{R^{-1}}(\mathbb{P}^1)$ the following holds for $c \geq 1$ independent of $R$ and $\bar{I}^{R}_{\mu}$:
\begin{equation}
\| F \|^4_{L^4(w )} \lesssim \log(R)^c \sum_{R^{-1/2} \leq \nu \leq 1} \bar{\kappa}^{\nu} \int \big( \sum_{J \in \mathbb{I}_{r/2}} |P_J F|^2 (\tilde{w}_{\mu}^{r,\nu,J})^{\frac{1}{2}} \big)^{2}  dx .
\end{equation}
\end{theorem}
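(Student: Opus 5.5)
We argue by a multiscale (broad/narrow) square function iteration in the spirit of Córdoba--Fefferman combined with Bourgain--Guth, keeping track of the weight at every step. Fix an intermediate angular scale $K \sim \log(R)^{C}$ (or a large constant) and decompose $F = \sum_{\tau} P_\tau F$ into arcs $\tau$ of length $K^{-1}$. Pointwise, $|F|^4 \lesssim K^{O(1)} \max_{\tau_1,\tau_2 \text{ transversal}} |P_{\tau_1}F P_{\tau_2}F|^2 + C\max_\tau |P_\tau F|^4$. In the narrow term we apply parabolic rescaling to the arc $\tau$ of length $\nu$. This sends the neighborhood of the parabola at scale $R$ to scale $\nu^2 R$, and turns the weight $\chi_{\bar I^R_\mu}(x_2)$ into a Galilean-translated, time-dilated weight. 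Its measure is $\mu_{\nu^2}$, which is what appears in \eqref{eq:kappaLambda}. Averaging the translates over the support of the rescaling produces the convex combinations $\tilde{w}^{r,\nu,J}_\mu$. Iterating over dyadic $\nu$ from $1$ down to $R^{-1/2}$ costs $O(\log R)$ steps, and this accounts for the sum over $\nu$ and the factor $\log(R)^c$.

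The core of the argument is the weighted bilinear estimate for transversal $\tau_1,\tau_2$ at a given normalized scale. We want
\[
\int |P_{\tau_1}F P_{\tau_2} F|^2 w \lesssim \log(R)^{O(1)} \prod_{M\in [R^{1/2}]_d} \kappa_M \int \Big(\sum_J |P_JF|^2 w'^{1/2}\Big)^2 .
\]
We obtain it by a Bourgain--Demeter type double-dyadic iteration $M \to M^{1/2}$. At scale $M$ we localize to $M$-squares $Q$ and write $P_{\tau_i}F$ on $Q$ as a sum of wave packets on $M^{1/2}\times M$ tubes. The bilinear product is then controlled by a fractal bilinear Kakeya estimate:
\[
\int_Q \Big(\sum_{T_1} c_{T_1}\chi_{T_1}\Big)\Big(\sum_{T_2} c_{T_2}\chi_{T_2}\Big) w \lesssim \kappa_M\, M^{-1}\, \frac{1}{|Q|}\cdots,
\]
in its weighted form. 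We prove this Kakeya bound directly. Two transversal tubes, both making angle $\ge 1/8$ with $e_1$, meet in an $M^{1/2}$-square, and the $x_2$-projection of that square is an interval $J_{M^{1/2}}$. Its $w$-mass is $M^{1/2}\mu(J_{M^{1/2}})$. Each tube has $w$-mass comparable to $M^{1/2}\mu(J_M)$, which uses the doubling condition \eqref{eq:MildContinuity} to compare the slightly tilted projections. Hence
\[
w(T_1\cap T_2) \le \kappa_M\, w(T_1) w(T_2)/w(Q)
\]
up to constants, and summing over pairs gives the weighted bilinear Kakeya estimate with constant $\kappa_M$.

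After the Kakeya step, the squares $|P_{\tau}F|^2$ at scale $M$ are replaced by sums over the arcs of length $M^{-1/2}$, by local $L^2$ orthogonality on $M^{1/2}$-balls. The weight is replaced by its average over those balls. This average is a convex combination of translates of $w$, so the structure $\tilde w$ is preserved. We then recurse at scale $M^{1/2}$. The double-dyadic chain has length $O(\log\log R)$, so the constants multiply to $\prod_M \kappa_M$ times an admissible loss. In the last step we use Cauchy--Schwarz to split the weight symmetrically between the two factors, which produces $(\tilde w)^{1/2}$ inside the square function.

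The main obstacle is this bookkeeping. Each rescaling and localization must keep the weight exactly a convex combination of translates of $w$, with the correct $\nu$-dilated measure, and the local orthogonality must hold with $w$ frozen on the relevant balls. Weights are not locally constant, so we will first replace $w$ by $w * \phi_{M^{1/2}}$ on the output side. This is legitimate because the Fourier support of $|P_JF|^2$ is localized, and it costs only the doubling constant $D$ at each scale.
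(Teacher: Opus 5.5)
The gap is in your linear-to-bilinear reduction, and it rules out the polylogarithmic loss the theorem claims. Your broad/narrow bound carries a constant $C>1$ in front of the narrow term. That constant comes from the neighbouring arcs of the maximizing $\tau$ and cannot be removed pointwise. The narrow term must then be handled by induction after rescaling, and these constants multiply rather than add. Going from $\nu=1$ down to $\nu=R^{-1/2}$ in steps of $K^{-1}$ costs $C^{n}$ with $n\sim\log R/\log K$, i.e.\ $R^{O(1/\log K)}$:
\begin{itemize}
\item for constant $K$ this is a power of $R$;
\item for $K\sim\log(R)^{C}$ it is $\exp(c\log R/\log\log R)$;
\item balancing against the $K^{O(1)}$ from the broad term gives at best $\exp(c\sqrt{\log R})$.
\end{itemize}
So ``$O(\log R)$ steps'' yields an $R^{\varepsilon}$-type loss, never $\log(R)^c$. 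That is the sub-polynomial loss the paper attributes to Kim--Ko, and it is what the paper itself accepts in Lemma~\ref{lem:LinearBilinearReduction}, where the weights are not product weights.

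The missing idea is to use that $w=\chi_{\R}\times\chi_{\bar I^R_\mu}$ does not depend on $x_1$; your proposal never uses the product structure.
\begin{itemize}
\item The paper writes $F^2=\sum_{\nu}\sum_{\theta_1\sim_\nu\theta_2}P_{\theta_1}F\,P_{\theta_2}F$ over Whitney pairs (length $\nu$, distance $\sim\nu$). This is an exact identity, and Cauchy--Schwarz over the $O(\log R)$ dyadic values of $\nu$ costs $\log R$ only once.
\item For fixed $\nu$ and fixed $x_2$, the $x_1$-Fourier transform of $P_{\theta_1}F\,P_{\theta_2}F$ is supported in $\theta_1+\theta_2$, and these sets are finitely overlapping over Whitney pairs.
\item Plancherel in $x_1$ followed by integration over $x_2\in\bar I^R_\mu$ then gives $\|\sum_{\theta_1\sim_\nu\theta_2}P_{\theta_1}FP_{\theta_2}F\|^2_{L^2(w)}\lesssim\sum_{\theta_1\sim_\nu\theta_2}\|P_{\theta_1}FP_{\theta_2}F\|^2_{L^2(w)}$, uniformly in $\nu$.
\item Each pair is rescaled once to unit separation, which is where $\mu_{\nu^2}$, $\bar\kappa^\nu$ and $\tilde w^{r,\nu,J}_\mu$ come from, and then the bilinear estimate is applied. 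There is no recursion in $\nu$.
\end{itemize}

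Your bilinear part is in line with Proposition~\ref{prop:BilinearSquareFunction} and Corollary~\ref{cor:IteratedSquareFunctionIteration}: $O(\log\log R)$ double-dyadic steps, hence only a polylogarithmic loss. However, you describe the order backwards. One first inflates from $M^{1/2}$-squares to $M$-squares via the Kakeya estimate on $M$-squares with $M^{1/2}\times M$ tubes. Then local $L^2$ orthogonality on the same $M$-squares refines arcs of length $M^{-1/2}$ to arcs of length $M^{-1}$.
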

Further comparison with \cite{KimKo2025} is in order. Notably, the estimate in \cite{KimKo2025} is in terms of the usual (unweighted) square function:
\begin{equation*}
\| \big( \sum_{J \in \mathbb{I}_{r/2}} |P_J F|^2 \big)^{\frac{1}{2}} \|_{L^4},
\end{equation*}
whereas presently the weight is essentially retained. This facilitates iterating the expression in a fractal decoupling iteration. The present induction-on-scales clarifies that the quantity \eqref{eq:kappa} for a bilinear square function estimate has only to be considered on local scales $M \leq R^{\frac{1}{2}}$ (and rescalings) and not on the global scale $M=R$. Moreover, considering product weights we can lower the sub-polynomial loss in $R$ in \cite{KimKo2025} to a polylogarithmic factor and the dependence on $\bar{\kappa}$ at multiple scales becomes transparent through a Whitney decomposition. Let $\alpha \in (0,1]$. The expressions simplify for weights satisfying the following uniformity condition for $\nu \in [2^{-\frac{r}{2}},1]$ and $M \leq \nu 2^{\frac{r}{2}}$:
\begin{equation}
\label{eq:LocalAhlfors}
\sup_{J_{M^{\frac{1}{2}}} \subseteq J_M : \text{intervals}} \frac{M^{\frac{1}{2}} \mu_{\nu^2}(J_{M^{\frac{1}{2}}})}{\mu_{\nu^2}(J_{M})} \lesssim M^{\frac{1-\alpha}{2}}.
\end{equation}
This is reminiscent of Ahlfors $\alpha$-regularity, though strictly speaking, this would require a discretization to pass from a set of measure zero to a set with non-vanishing measure. And notably, the condition \eqref{eq:LocalAhlfors} is more localized.
Indeed, on the largest scale, which is the original spatial scale corresponding to $\nu =1$, we only have to require the $\alpha$-regularity condition on intervals up to length $R^{\frac{1}{2}}$. For smaller $\nu$, on the level of the original scale $R$, we have to consider intervals possibly larger than $R^{\frac{1}{2}}$, but the estimate can still be regarded as local because $M \ll R^{\frac{1}{2}}$ in this case.

As a consequence of \eqref{eq:LocalAhlfors}, $\bar{\kappa}^{\nu} \leq \log(R)^c (\nu^2 R)^{\frac{1-\alpha}{2}}$ from summing a geometric series, and we find under the assumptions of Theorem \ref{thm:FractalSquareFunction}:
\begin{equation}
\label{eq:SimplificationLocalAhlfors}
\| F \|_{L^4(w)} \lesssim \log(R)^{c} R^{\frac{1-\alpha}{8}} \sup_{\nu \in [2^{-r/2},1]} \bigg( \int \big( \sum_{J \in \mathbb{I}_{r/2}} |P_J F|^2 (\tilde{w}_{\mu}^{r,\nu,J})^{\frac{1}{2}} \big)^2  dx \bigg)^{\frac{1}{4}}.
\end{equation}

We obtain $L^4$-Strichartz estimates on the torus as a corollary, invoking Bourgain's approximation argument \cite{Bourgain2013} (see \cite[Section~2.2]{Schippa2025Strichartz} for a detailed exposition). For $\mathcal{I} \subseteq [0,1]$ let $\bar{I}^{N^2}_{\mu} = \bigcup_{k \in \Z} (N^2 \mathcal{I} + k N^2) $ be the periodization of the $N^2$-dilation.
\begin{theorem}
\label{thm:FractalL4Strichartz}
Let $N \in 2^{\N}$, $\mathcal{I} \subseteq [0,1]$ measurable with $\bar{I}_{\mu}^{N^2}$ satisfying \eqref{eq:MildContinuity} and \eqref{eq:LocalAhlfors} with $R=N^2$. Suppose that $|\mathcal{I}| \lesssim N^{2(\alpha' - 1)}$ for $\alpha' \leq \alpha$. Then the following estimate holds:
\begin{equation*}
\| e^{it \Delta} P_N f \|_{L^4_{t,x}(\mathcal{I} \times \T)} \lesssim \log(N)^c N^{\frac{2 \alpha'-\alpha-1}{4}  } \| f \|_{L^2(\T)}.
\end{equation*}
\end{theorem}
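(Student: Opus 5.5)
We derive Theorem \ref{thm:FractalL4Strichartz} from the weighted estimate \eqref{eq:SimplificationLocalAhlfors} by Bourgain's approximation argument. Write $u(t,x) = e^{it\Delta}P_N f = \sum_{|k|\sim N} \hat f(k) e^{i(kx + k^2 t)}$. Rescale $x \to N x$, $t \to N^2 t$, so that frequencies lie in $[-1,1]$ on the parabola. Then $\tilde u(t,x) = u(t/N^2, x/N)$ is $N$-periodic in $x$, $N^2$-periodic in $t$, and its time-frequency support lies on $\mathbb{P}^1$ at scale $R = N^2$.

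To make the Fourier support a genuine $R^{-1}$-neighborhood, we multiply $\tilde u$ by a Schwartz cutoff $\phi(x/R)$ adapted to an $R\times R$ region in $(x,t)$, i.e.\ $x$-range $N^2$. This covers $N$ spatial periods and one time period $[0,N^2]$. The product $F = \tilde u\,\phi$ then has $\text{supp}(\hat F)\subseteq \mathcal{N}_{R^{-1}}(\mathbb{P}^1)$. Meanwhile $|F| \gtrsim |\tilde u|$ on the region, with rapidly decaying tails. By periodicity in $x$ and change of variables,
\begin{equation*}
\| u \|^4_{L^4(\mathcal{I}\times\T)} \sim N^{-3}\cdot N^{-1} \int |F|^4\, w \,dx\,dt, \qquad w = \chi_\R(x)\chi_{\bar I^{N^2}_\mu}(t).
\end{equation*}
The $N^{-3}$ comes from the Jacobian and the $N^{-1}$ from the number of periods. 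The restriction to one time period of $\bar I^{N^2}_\mu$ is handled by the cutoff, whose tails are absorbed by periodicity. We apply \eqref{eq:SimplificationLocalAhlfors} (its hypotheses are \eqref{eq:MildContinuity} and \eqref{eq:LocalAhlfors}, assumed) to obtain the factor $\log(N)^c N^{(1-\alpha)/4}$ times the weighted square function.

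Next we estimate the square function term. Each $P_J F$ with $|J| = N^{-1}$ contains essentially one frequency $k$ (after rescaling, spacing $N^{-1}$). So $|P_J F| \lesssim |\hat f(k)|\,|\phi|$, up to Schwartz tails, which are handled by using a slightly wider cutoff. Hence
\begin{equation*}
\int \big(\sum_J |P_JF|^2 (\tilde w^{r,\nu,J}_\mu)^{1/2}\big)^2 \lesssim \sum_{J,J'} |\hat f(k_J)|^2|\hat f(k_{J'})|^2 \int |\phi|^4 (\tilde w^{J})^{1/2}(\tilde w^{J'})^{1/2}.
\end{equation*}
By Cauchy--Schwarz, $\int |\phi|^4 (\tilde w^J \tilde w^{J'})^{1/2} \le \sup_J \int |\phi|^4 \tilde w^J$. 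Since $\tilde w^J$ is a convex combination of translates of $w$, this is at most $\sup_v \int |\phi|^4 w(\cdot+v)$. That quantity is bounded by $N^2 \cdot \sup_{\text{time window of length }N^2}\lambda^1(\bar I^{N^2}_\mu\cap\text{window}) \lesssim N^2 \cdot N^2|\mathcal{I}|$, by periodicity of $\bar I^{N^2}_\mu$. This uses only the convex-combination structure, as the paper anticipates. Thus the square-function term is $\lesssim \|f\|_2^4\, N^{4}|\mathcal{I}| \lesssim \|f\|_2^4 N^{2+2\alpha'}$.

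Combining the pieces:
\begin{equation*}
\|u\|^4_{L^4(\mathcal{I}\times\T)} \lesssim \log(N)^{4c} N^{-4}\, N^{1-\alpha}\, N^{2+2\alpha'} \|f\|_2^4 = \log(N)^{4c} N^{2\alpha'-\alpha-1}\|f\|_2^4.
\end{equation*}
Taking fourth roots gives the claimed exponent $\frac{2\alpha'-\alpha-1}{4}$.

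The main obstacle is the bookkeeping in the approximation step. Two things must be checked. First, the cutoff $\phi$ must not destroy the lower bound on $\mathcal I$: it must be chosen $\gtrsim 1$ on one full period. Second, the Schwartz tails of $P_J F$ must be controlled against translated weights $w(\cdot+v_k)$ uniformly in $v_k$. For this I would use the periodicity of $\bar I^{N^2}_\mu$ and decompose the tails dyadically into regions of size $2^jN^2$, each paying measure $\lesssim 2^{2j}N^4|\mathcal I|$ against decay $2^{-10j}$.
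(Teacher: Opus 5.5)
Your proposal is correct and follows essentially the same route as the paper. You rescale, localize with a cutoff at scale $N^2$ in both variables, apply \eqref{eq:SimplificationLocalAhlfors}, and bound each $P_J F$ by a single modulated bump integrated against convex combinations of translates of the periodic weight, giving $N^2\cdot N^2|\mathcal{I}|\lesssim N^{2+2\alpha'}$. Your expansion of the square followed by Cauchy--Schwarz is an equivalent form of the paper's Minkowski step. Two small points: the cutoff must have Fourier support in a $cR^{-1}$-ball (a general Schwartz cutoff would not give $\text{supp}(\hat F)\subseteq\mathcal{N}_{R^{-1}}(\mathbb{P}^1)$), and only the upper bound in your first ``$\sim$'' is needed.
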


We turn to the main result: \emph{weighted decoupling inequalities} via fractal Kakeya estimates. 
For decoupling we consider the following slightly more general weights:
\begin{assumption}
\label{assump:weights}
There is $D_1 \geq 1$ such that the following holds. Let $M \geq 1$, and let $\mu$ be a Borel measure on $\R$, which satisfies \eqref{eq:MildContinuity}. We consider weights $w \geq 0$ which are measurable functions, with
\begin{equation}
\label{eq:AlmostProductWeights}
D_1^{-1} w(Q_M) \leq M \cdot \mu(J_M) \leq D_1 w(Q_M).
\end{equation}
\end{assumption}

As a consequence, we obtain for tubes $T_i$ with the dimensions and inclination specified above
\begin{equation*}
w(T_i) = \int_{\R^2} \chi_{T_i}(x) w(x) dx \leq \sum_{Q_{M^{\frac{1}{2}}} \in \mathcal{Q}_{M^{\frac{1}{2}}}(T_i)} \int_{Q_{M^{\frac{1}{2}}}} w(x) dx \sim M^{\frac{1}{2}} \mu(J_M)
\end{equation*}
for $J_M \subseteq \R$ an interval with $\pi_2(T_i) \subseteq J_M$ and note that $\lambda^1(\pi_2(T_i)) \sim M$. Above $\mathcal{Q}_{M^{\frac{1}{2}}}(T_i)$ denotes a finitely overlapping cover of $T_i$ with cubes of size $M^{\frac{1}{2}}$.
We introduce more notation: For $F \geq 0$ a measurable function, $A \subseteq \R^2$ a measurable set, we let
\begin{equation*}
\int_{A_{\mu}} F(x) dx = \int_A F(x) w(x) dx, \quad \dashint_{A_{\mu}} F(x) dx = \frac{1}{w(A)} \int_{A_{\mu}} F(x) dx.
\end{equation*}
In principle, when proving weighted decoupling, the constants $\kappa_M^{\lambda}$ seem tractable like in Theorem \ref{thm:FractalSquareFunction}. To simplify matters, we assume \eqref{eq:LocalAhlfors}.

The following is the main result:
\begin{theorem}
\label{thm:FractalDecouplingTheorem}
Let $\text{supp}(\hat{F}) \subseteq \mathcal{N}_{R^{-1}}(\mathbb{P}^1)$ and $w$ a weight satisfying Assumption \ref{assump:weights} and \eqref{eq:LocalAhlfors}. Then the following holds for any $\varepsilon > 0$:
\begin{equation}
\| F \|_{L^6(w)} \lesssim_{\varepsilon} R^{ \frac{1-\alpha}{6} + \varepsilon} \big( \sum_{I \in \mathbb{I}_{r/2}} \| P_I F \|^2_{L^6(w^{I,F})} \big)^{\frac{1}{2}}
\end{equation}
for $w^{I,F} = \sum_{k \in \N} a_k w(\cdot + v_k)$, $v_k \in \R^2$  convex combinations of translates of $w$.
\end{theorem}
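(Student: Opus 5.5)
The plan is to run the Bourgain--Demeter bilinear decoupling iteration in $L^6$, adapted to the weighted measure $w\,dx$. The fractal bilinear Kakeya estimate replaces the usual bilinear Kakeya input at each step. Let $D_p(R)$ denote the best constant in the weighted decoupling inequality, uniform over all weights satisfying Assumption \ref{assump:weights} and \eqref{eq:LocalAhlfors} with fixed constants. The goal is $D_6(R)\lesssim_\varepsilon R^{\frac{1-\alpha}{6}+\varepsilon}$.

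\textbf{Step 1: bilinear reduction.} A Bourgain--Guth type broad/narrow decomposition reduces the estimate to bilinear terms $\| |P_{I_1}F P_{I_2}F|^{1/2}\|_{L^6(w)}$ with $I_1,I_2$ separated at scale $K^{-1}$. Narrow terms are handled by parabolic rescaling. Rescaling an interval of length $K^{-1}$ maps $w$ to a weight $\mu_{\nu^2}$ of the type appearing in \eqref{eq:kappaLambda}. Condition \eqref{eq:LocalAhlfors} is assumed for all such $\nu$, so the rescaled weights stay in the admissible class, and Galilean shifts produce the translates $w(\cdot+v_k)$. This yields $D_6(R)\le C D_6(R/K^2)+C_K \mathrm{BilD}(R)$.

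\textbf{Step 2: the multiscale iteration.} On a cube $Q_{M}$, use local $L^2$-orthogonality, which holds for weighted averages $\dashint_{Q_\mu}$ because by \eqref{eq:AlmostProductWeights} $w$ is comparable to a product-like measure at every scale $\geq1$. Combine it with the fractal bilinear Kakeya estimate. For two transverse families of $M^{1/2}\times M$ tubes $T_1,T_2$, I would prove
\begin{equation*}
\int_{Q_{M,\mu}} \Big(\sum_{T_1} c_{T_1}\chi_{T_1}\Big)\Big(\sum_{T_2} c_{T_2}\chi_{T_2}\Big) \lesssim \kappa_M \frac{1}{w(Q_M)}\Big(\sum_{T_1} c_{T_1} w(T_1)\Big)\Big(\sum_{T_2} c_{T_2} w(T_2)\Big).
\end{equation*}
This follows since $w(T_1\cap T_2)\lesssim M^{1/2}\mu(J_{M^{1/2}})$, with $w(T_i)\sim M^{1/2}\mu(J_M)$ and $w(Q_M)\sim M\mu(J_M)$. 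The ratio is exactly $\kappa_M\lesssim M^{\frac{1-\alpha}{2}}$. Interpolating between $L^2$ and $L^6$ with this bilinear input, exactly as in Bourgain--Demeter, gives at each double-dyadic step $M\to M^{2}$ a loss of $\kappa_{M}^{1/6}$-type factors times $D_6$ at the intermediate scale, since H\"older in $L^6$ takes the $1/6$ power of the $L^3$ bilinear Kakeya constant. The translated convex-combination weights $w^{I,F}$ appear because each wave packet carries the weight adapted to its own tube. Averaging over tube positions replaces $w$ by an average of translates, which still satisfies Assumption \ref{assump:weights}.

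\textbf{Step 3: closing the induction.} Take the product of the Kakeya losses over the double-dyadic scales $[\nu R^{1/2}]_d$. Using \eqref{eq:LocalAhlfors}, this product sums as a geometric series in the exponent, giving $\prod_M \kappa_M^{1/6}\lesssim (R^{1/2})^{\frac{1-\alpha}{6}\cdot 2}=R^{\frac{1-\alpha}{6}}$ up to $R^\varepsilon$. The iteration then yields $D_6(R)\le C_\varepsilon R^{\frac{1-\alpha}{6}+\varepsilon}+R^{-\delta}D_6(R)$-type self-improvement, exactly as in the standard bootstrap. I expect the main obstacle to be Step 2. The difficulty is to make the weighted iteration consistent: the weight must not degrade under repeated rescaling and translation, and the averages $\dashint_{Q_\mu}$ at successive scales must be compatible. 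This requires using \eqref{eq:MildContinuity} to control $w$ on slightly enlarged cubes, where Schwartz tails of wave packets are absorbed. A second concern is that the exponents must be tracked carefully so that the fractal loss enters only through the local scales $M\le \nu R^{1/2}$ and not at the global scale.
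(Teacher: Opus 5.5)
Your Steps 1 and 2 follow the paper's architecture. That is, the broad--narrow reduction of Lemma \ref{lem:LinearBilinearReduction}, a Bourgain--Demeter partial decoupling with the fractal bilinear Kakeya estimate in place of bilinear Kakeya, local $L^2$-orthogonality transferred to fractal averages, and convex combinations of translates of $w$ as output weights. The gap is in Step 3, which is exactly where the exponent $\frac{1-\alpha}{6}$ has to be produced.

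First, the arithmetic is off. By \eqref{eq:LocalAhlfors}, $\kappa_M^{1/6}\lesssim M^{\frac{1-\alpha}{12}}$, and $\prod_{M\in[R^{1/2}]_d}M\le R$. Hence $\prod_M\kappa_M^{1/6}\lesssim R^{\frac{1-\alpha}{12}}$, not $R^{\frac{1-\alpha}{6}}$.

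Second, and more seriously, the Kakeya losses do not multiply at full strength in this iteration. The partial decoupling has the form $A_6(m)\lesssim 2^{r\delta}\kappa_{2^{2m}}^{1/6}A_6(2m)^{1/2}\tilde D_6(m)^{1/2}$. After $s$ steps starting from $m_0=(r/2)2^{-s}$, the loss at level $j$ is therefore raised to the power $2^{-j}$. The accumulated Kakeya loss is only $\prod_{j<s}\kappa_{2^{2^{j+1}m_0}}^{2^{-j}/6}\lesssim 2^{\frac{1-\alpha}{12}\cdot\frac{sr}{2^s}}$. Taken at face value with correct arithmetic, your mechanism would predict $R^{\frac{1-\alpha}{12}+\varepsilon}$, which the iteration does not deliver. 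You reach the right exponent only because these two errors happen to compensate.

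The exponent $\frac{1-\alpha}{6}$ is instead the fixed point of a multiplicative bootstrap, not an additive self-improvement $D_6(R)\le C_\varepsilon R^{\frac{1-\alpha}{6}+\varepsilon}+R^{-\delta}D_6(R)$. Insert the hypothesis $\text{Dec}(6,\rho,\alpha)\le C2^{\eta\rho}$ into the bound of the globalized $\tilde D_6(a)$ by $\text{Dec}(6,r-2a,\alpha)$, which comes from parabolic rescaling. Also include the jump-start loss $2^{r/2^s}$ for passing from the bilinear $L^6$ norm to $A_6(m_0)$. Here $s$ must be fixed independently of $r$, so the iteration starts at scale $R^{2^{-s-1}}$, not the unit scale. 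This yields
\[
\text{BilDec}(6,r,\alpha)\lesssim 2^{O(r\delta)}\,2^{\eta r(1-2^{-s})}\,2^{\frac{r}{2^s}\left(1+\frac{(1-\alpha)s}{12}-\frac{\eta s}{2}\right)}.
\]
For small $\delta$, this improves on $2^{\eta r}$ precisely when $\eta>\frac{1+(1-\alpha)s/12}{1+s/2}$. That threshold tends to $2\cdot\frac{1-\alpha}{12}=\frac{1-\alpha}{6}$ as $s\to\infty$.

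For $\eta=\frac{1-\alpha}{6}+\varepsilon$ the bracket equals $1-\frac{s\varepsilon}{2}$, which is negative once $s>2/\varepsilon$. In words, the $\frac{1-\alpha}{6}$-part of the gain from the smaller-scale constants $\text{Dec}(6,r-2a,\alpha)$ exactly cancels the Kakeya loss, and the $\varepsilon$-part pays for the jump-start. One obtains $\varepsilon\mapsto(1-2^{-s})\varepsilon$. This fixed-point computation is what actually closes the induction, and your Step 3 needs to be replaced by it.
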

We comment more on the weights in Section \ref{section:FractalDecoupling}. We allow the coefficients and translations to depend on $I$ and $F$. This yields the following periodic Strichartz estimate on a fractal set of times:
\begin{theorem}
\label{thm:FractalL6Strichartz}
Let $N \in 2^{\N}$, $\mathcal{I} \subseteq [0,1]$ with $\bar{I}^{N^2}_{\mu}$ satisfying \eqref{eq:MildContinuity} and \eqref{eq:LocalAhlfors}. Suppose that $|\mathcal{I}| \lesssim N^{2(\alpha' - 1)}$ for $\alpha' < \alpha$. Then the following estimate holds:
\begin{equation*}
\| e^{it \Delta} P_N f \|_{L^6_{t,x}(\mathcal{I} \times \T)} \lesssim_\varepsilon N^{(\alpha' - \alpha)/3 + \varepsilon} \| f \|_{L^2(\T)}.
\end{equation*}
\end{theorem}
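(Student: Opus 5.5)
We derive Theorem \ref{thm:FractalL6Strichartz} from Theorem \ref{thm:FractalDecouplingTheorem}, the same way Theorem \ref{thm:FractalL4Strichartz} follows from Theorem \ref{thm:FractalSquareFunction}, via Bourgain's approximation argument \cite{Bourgain2013} (cf. \cite[Section~2.2]{Schippa2025Strichartz}).

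\emph{Rescaling.} Write $e^{it\Delta}P_N f(x) = \sum_{|n|\sim N} \hat f(n) e^{i(nx - n^2 t)}$ and rescale $x = x'/N$, $t = t'/N^2$. Then $\xi = n/N \in [-1,1]$ and the space-time frequencies $(\xi,\xi^2)$ lie on $\mathbb{P}^1$. The time set $\mathcal{I}$ becomes $N^2\mathcal{I}$, the spatial period becomes $2\pi N$, and $R = N^2$. Periodizing in $x'$ over $N$ periods and in $t'$ with period $N^2$ (the time period $2\pi$ of the torus) turns the set of times into $\bar I^{N^2}_\mu$. We then localize to a box $[0,N^2]^2$ and smooth the Fourier support to an $R^{-1}$-neighbourhood by multiplying by a Schwartz bump adapted to a ball of radius $N^2$. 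The resulting function $F$ has $\text{supp}(\hat F)\subseteq \mathcal{N}_{R^{-1}}(\mathbb{P}^1)$ and
\[
\|e^{it\Delta}P_Nf\|^6_{L^6(\mathcal{I}\times\T)} \sim N^{-3}\cdot N^{-1}\,\|F\|^6_{L^6(w)},
\]
up to a fixed normalization. The weight is $w=\chi_{[0,N^2]}(x_1)\chi_{\bar I^{N^2}_\mu}(x_2)$ or a smooth variant. This $w$ satisfies Assumption \ref{assump:weights} because $\mu$ satisfies \eqref{eq:MildContinuity}, and it satisfies \eqref{eq:LocalAhlfors} by hypothesis.

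\emph{Applying decoupling.} Apply Theorem \ref{thm:FractalDecouplingTheorem}. The projections $P_IF$ for $I\in\mathbb{I}_{r/2}$, whose length is $N^{-1}$, contain $O(1)$ frequencies each, so $|P_IF|$ is essentially $|\hat f(n)|$ times a bump on the box. Each $w^{I,F}$ is a convex combination of translates of $w$. The crucial point is that every translate of $\chi_{\bar I^{N^2}_\mu}$ meets a window of length $N^2$ in measure at most $N^2|\mathcal{I}|$, by periodicity; a shift in $x_1$ is harmless. Therefore
\[
\|P_IF\|_{L^6(w^{I,F})}^2 \lesssim |\hat f(n_I)|^2 \big(N^2\cdot N^2|\mathcal I|\big)^{1/3}.
\]
Summing over $I$ gives
\[
\|F\|_{L^6(w)} \lesssim_\varepsilon N^{\frac{2(1-\alpha)}{6}+\varepsilon}\,(N^4|\mathcal{I}|)^{1/6}\,\|f\|_{\ell^2}.
\]
Undoing the normalization divides by $N^{4/6}$. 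Inserting $|\mathcal{I}|\lesssim N^{2(\alpha'-1)}$, the exponent becomes
\[
\frac{1-\alpha}{3}+\frac{2\alpha'-2}{6} = \frac{\alpha'-\alpha}{3},
\]
which is the claimed bound.

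\emph{Main obstacle.} The delicate step is making the passage rigorous: the Schwartz tails must be absorbed, and one must check that the localized weight still satisfies Assumption \ref{assump:weights} and \eqref{eq:LocalAhlfors} with constants uniform in $N$. I would handle this with a smooth cutoff adapted to $[0,N^2]^2$. Decay outside the box is then controlled by summing translates of the box, where each translate satisfies the same estimates by periodicity. I would also verify the normalization constants carefully to confirm the exponent bookkeeping above.
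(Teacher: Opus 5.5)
Your route is the paper's. The paper omits this proof, saying it follows the $L^4$ argument of Theorem \ref{thm:FractalL4Strichartz}, and your plan is that argument with decoupling in place of the square function. You rescale and periodize, insert a Schwartz bump so that $\mathrm{supp}(\hat F)\subseteq\mathcal{N}_{N^{-2}}(\mathbb{P}^1)$, and apply Theorem \ref{thm:FractalDecouplingTheorem} with $R=N^2$. You then bound each $\|P_IF\|_{L^6(w^{I,F})}$ using two facts: $w^{I,F}$ is a convex combination of translates of $w$, and every window of length $N^2$ meets $\bar I^{N^2}_\mu$ in measure $\lesssim N^2|\mathcal I|$. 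Your exponent count $\frac{1-\alpha}{3}+\frac{\alpha'-1}{3}=\frac{\alpha'-\alpha}{3}$ is correct. The first display should read $\lesssim$ rather than $\sim$, since the time domain is enlarged; only the upper bound is needed.

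One step fails as written: the choice of weight. Neither $w=\chi_{[0,N^2]}(x_1)\chi_{\bar I^{N^2}_\mu}(x_2)$ nor any smooth version of the $x_1$-cutoff satisfies Assumption \ref{assump:weights}. Condition \eqref{eq:AlmostProductWeights} must hold for every cube $Q_M$. For cubes far from $[0,N^2]$ in $x_1$, $w(Q_M)$ is zero or tiny while $M\mu(J_M)>0$, so no uniform $D_1$ exists, and Theorem \ref{thm:FractalDecouplingTheorem} cannot be invoked.

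The localization belongs in $F$, where you already placed the bump. Take $w(x_1,x_2)=\chi_{\bar I^{N^2}_\mu}(x_2)$, as the paper does. Then $w(Q_M)=M\mu(J_M)$ exactly, so $D_1=1$, and \eqref{eq:MildContinuity} and \eqref{eq:LocalAhlfors} are precisely the hypotheses. With this choice, the ``main obstacle'' you describe (uniform constants for a localized weight) disappears. The only remaining care is the Schwartz tails of $P_IF$ in the single-term bound, which you handle correctly by summing over translated $N^2$-boxes.
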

The estimate $\|e^{it \Delta} P_N f \|_{L^6_{t,x}(\mathcal{I} \times \T)} \lesssim_\varepsilon N^{\varepsilon} \| f \|_{L^2(\T)}$ holds regardless of sparsity by the usual Strichartz estimates. To observe a gain for fractal sets of times, the condition $\alpha'<\alpha$ is vital, that is to say, a two-scale condition which separates a sparser global scale ($\alpha'$) from a denser local scale (given by $\alpha$). 


\subsection{Remarks}
It seems natural to extend the analysis in several ways in the future:
\begin{enumerate}
\item The decoupling estimates and applications are recorded under a local Ahlfors regularity assumption \eqref{eq:LocalAhlfors}. However, the analysis seems explicit enough to detect multiple scales, as illustrated in Theorem \ref{thm:FractalSquareFunction}. It remains to examine to what extent the Assouad spectrum is captured and quantified by the Legendre--Assouad function, which was used to formulate the fractal local smoothing conjecture for the wave equation \cite{BeltranRoosRutarSeeger2025,RoosSeeger2023,RoosSeeger2025}.
\item It is plausible that the strategy extends to higher dimensions as well. The cornerstone would be a fractal multilinear Kakeya estimate \cite{BennettCarberyTao2006,Guth2010,Guth2015}.
\end{enumerate}

\emph{Outline of the paper.} After collecting preliminaries and proving the fractal Kakeya estimate in Section \ref{section:Preliminaries}, we turn to the proof of the fractal square function estimate in Section \ref{section:SquareFunction}. The proof is an explicit induction-on-scales with ball inflation resting on the fractal Kakeya estimate. Then, in Section \ref{section:FractalDecoupling} we prove the fractal decoupling estimate for the parabola via a partial decoupling, which again rests on the fractal bilinear Kakeya, and induction-on-scales akin to the classical decoupling iteration. 
In Section \ref{section:ExpSum} we sketch the argument to prove the corresponding exponential sum estimates.

\section{Preliminaries}
\label{section:Preliminaries}
Before we turn to the proofs of the decoupling results, we record the key fractal Kakeya estimate, wave packet decompositions adapted to our setting, and the locally constant property.

\subsection{Fractal Kakeya estimate}

Let $1 \leq N \leq R$. Below we let $T_i \in \T_i$ denote tubes of dimensions $N^{1/2} \times N$, $\nu(T)$ its long side, and $\nu^{\perp}(T)$ its short side and $\angle(\nu(T_i),e_1) \geq 1/8$. We quantify their ``fractal" overlap in a fractal cube of size $N$.
Let $\bar{I}_{\mu}^N$ be like above satisfying \eqref{eq:MildContinuity} and $I_x^N$, $I^N_t$ intervals of length $N \geq 1$. Let $w$ be a weight satisfying Assumption \ref{assump:weights}.
\begin{proposition}
\label{prop:FractalKakeya}
Let $Q^N = I^N_x \times I^N_t$. Let 
\begin{equation*}
g_1 = \sum_{T_1 \in \T_1} a_{T_1} 1_{T_1}, \quad g_2 = \sum_{T_2 \in \mathbb{T}_2} b_{T_2} 1_{T_2}
\end{equation*}
with $T_i \cap Q^N \neq \emptyset$ for any $T_i \in \T_i$, $|\nu(T_1) \wedge \nu(T_2)| \geq \frac{1}{8}$ for any $T_i \in \T_i$ and $a_{T_1}, b_{T_2} > 0$. Then,
\begin{equation}
\label{eq:FractalKakeya}
\dashint_{Q_{\mu}^N} g_1 g_2 dx \lesssim \big( \sup_{J_{N^{\frac{1}{2}}} \subseteq I_t^N} \frac{N^{\frac{1}{2}} \mu(J_{N^{1/2}})}{\mu(I_t^N)} \big) \dashint_{3 Q_{\mu}^N} g_1 dx \dashint_{3 Q_{\mu}^N} g_2 dx
\end{equation}
with $3Q^N = 3 I^N_x \times 3 I^N_t $ denoting the mildly enlarged cube.
\end{proposition}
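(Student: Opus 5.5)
By multilinearity and positivity of the coefficients, it suffices to prove the single-pair estimate
\begin{equation*}
\int_{Q^N} 1_{T_1} 1_{T_2} \, w \, dx \lesssim \Big( \sup_{J_{N^{1/2}} \subseteq I_t^N} \frac{N^{1/2}\mu(J_{N^{1/2}})}{\mu(I^N_t)} \Big) \frac{w(T_1 \cap 3Q^N)\, w(T_2\cap 3Q^N)}{w(3Q^N)},
\end{equation*}
and then to sum over $T_1 \in \T_1$, $T_2 \in \T_2$ with weights $a_{T_1} b_{T_2}$. Dividing by $w(Q^N)$, and using $w(Q^N) \sim w(3Q^N) \sim N\mu(I_t^N)$ (Assumption \ref{assump:weights} together with the doubling \eqref{eq:MildContinuity}), turns the right-hand side into the product of the averages $\dashint_{3Q^N_\mu} g_i$ times the supremum factor.

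For the pair estimate, the left-hand side is supported on $T_1 \cap T_2 \cap Q^N$. Since $|\nu(T_1)\wedge\nu(T_2)| \geq 1/8$, this intersection is contained in a ball of radius $\lesssim N^{1/2}$, hence in a union of $O(1)$ cubes $Q_{N^{1/2}}$. By Assumption \ref{assump:weights} with $M = N^{1/2}$,
\begin{equation*}
\text{LHS} \lesssim \sup_{J_{N^{1/2}}\subseteq 3I_t^N} N^{1/2}\mu(J_{N^{1/2}}).
\end{equation*}
For the right-hand side we need the lower bound $w(T_i \cap 3Q^N) \gtrsim N^{1/2}\mu(I_t^N)$. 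Because $T_i$ meets $Q^N$, has length $N$, and satisfies $\angle(\nu(T_i),e_1) \geq 1/8$, its portion inside $3Q^N$ has $t$-projection of length $\gtrsim N$ containing a fixed fraction of $I^N_t$, or at least an interval $J$ of length $\sim N$ inside $3I^N_t$. Covering $T_i\cap 3Q^N$ by $\sim N^{1/2}$ essentially disjoint cubes of size $N^{1/2}$, stacked along the tube with distinct $t$-projections tiling $J$, gives
\begin{equation*}
w(T_i\cap 3Q^N) \gtrsim \sum N^{1/2}\mu(J_{N^{1/2}}) \sim N^{1/2}\mu(J) \gtrsim N^{1/2}\mu(I^N_t),
\end{equation*}
using doubling to compare $\mu(J)$ with $\mu(I_t^N)$. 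Consequently
\begin{equation*}
\text{RHS}/\text{factor} \gtrsim N\mu(I_t^N)^2/(N\mu(I_t^N)) = \mu(I_t^N),
\end{equation*}
and the pair bound follows directly from the definition of the supremum.

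\textbf{Main obstacle.} The delicate point is the lower bound $w(T_i\cap 3Q^N)\gtrsim N^{1/2}\mu(I^N_t)$. Assumption \ref{assump:weights} only controls $w$ on full cubes $Q_M$, while the tube cross-section has width $N^{1/2}$. One therefore has to use cubes of side exactly $N^{1/2}$ contained in the tube, which requires a slightly fattened tube or a constant-factor shrinking, absorbed into implicit constants. The comparison $\mu(J)\sim\mu(I^N_t)$ for a subinterval $J$ of comparable length also needs \eqref{eq:MildContinuity} applied a bounded number of times; if $J$ is not nested with $I_t^N$, pass through a common enlargement $3I^N_t$.

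A second, minor issue is that the supremum in \eqref{eq:FractalKakeya} is over $J\subseteq I_t^N$, whereas the intersection may sit near the boundary, over $3I_t^N$. This is handled by splitting a boundary $J_{N^{1/2}}$ into pieces and using doubling, or by noting the statement is only needed up to such mild enlargements.
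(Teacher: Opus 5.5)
Your proposal is correct and takes essentially the paper's route. You expand $g_1 g_2$ into tube pairs, bound $w(T_1\cap T_2\cap Q^N)$ by $N^{1/2}\mu(J_{CN^{1/2}})$ because transversality confines the $t$-projection of the intersection to an interval of length $\lesssim N^{1/2}$, and show $\dashint_{3Q^N_\mu} 1_{T_i} \gtrsim N^{-1/2}$ via Assumption~\ref{assump:weights} and \eqref{eq:MildContinuity}. Your lower bound, with cubes of side $cN^{1/2}$ placed inside the tube, is more careful than the paper's brief $\sim$-claim, and the boundary issue you flag does not arise: the left-hand side is integrated over $Q^N$, so the $t$-projection of $T_1\cap T_2\cap Q^N$ already lies in $I^N_t$.
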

\begin{proof}
We find for any tubes $T_i \in \mathbb{T}_i$ by the transversality condition that $\pi_2(T_1 \cap T_2 \cap Q_N) \subseteq J_{C N^{\frac{1}{2}}}^{T_1 \cap T_2}$ for an interval $J_{C N^{\frac{1}{2}}}^{T_1 \cap T_2} \subseteq I_t^N$ of length $C N^{\frac{1}{2}}$. $C$ is a fixed constant depending on the quantitative transversality. Consequently,
\begin{equation}
\label{eq:IntersectionEstimate}
\dashint_{Q_{\mu}^N} \chi_{T_1} \chi_{T_2} \lesssim \frac{1}{\mu(I_t^N) N} N^{\frac{1}{2}} \mu(J_{C N^{\frac{1}{2}}}^{T_1 \cap T_2}) \lesssim \frac{1}{N} \sup_{J_{N^{\frac{1}{2}}} \subseteq I_t^N} \frac{N^{\frac{1}{2}} \mu(J_{N^{\frac{1}{2}}})}{\mu(I_t^N)}.
\end{equation}
To proceed, we note that for a tube $T_i \cap Q^N \neq \emptyset$ it holds $T_i \subseteq 3 Q^N$. So, for any $T_i \in \mathbb{T}_i$, we find for an interval $J_N^{T_i} \subseteq 3 I_t^N$:
\begin{equation*}
\int_{3 Q^N_{\mu}} T_i \sim N^{\frac{1}{2}} \mu(J_N^{T_i}).
\end{equation*}
By \eqref{eq:MildContinuity} we find $\mu(J_N^{T_i}) \sim \mu(I_t^N)$. Consequently,
\begin{equation}
\label{eq:IntersectionEstII}
\dashint_{3 Q^N_{\mu}} T_i \sim \frac{N^{\frac{1}{2}} \mu(J_N^{T_i})}{N \mu (3 J_N)} \sim N^{-\frac{1}{2}}.
\end{equation}
Taking \eqref{eq:IntersectionEstimate} and \eqref{eq:IntersectionEstII} together, \eqref{eq:FractalKakeya} follows from expanding $g_1 g_2$.
\end{proof}

\subsection{Fourier transform and wave packet decompositions}

The Fourier transform of $f \in \mathcal{S}(\R^d)$ is given by
\begin{equation*}
\hat{f}(\xi) = \int_{\R^d} e^{-i x \xi} f(x) dx
\end{equation*}
and the inverse Fourier transform of $g \in \mathcal{S}(\R^d)$ given by
\begin{equation*}
\check{g}(x) = \frac{1}{(2 \pi)^d} \int_{\R^d} e^{i x \xi} g(\xi) d\xi.
\end{equation*}
The extensions to $\mathcal{S}'$ are as usual.

We turn to the wave packet decomposition, which will allow us to decompose $P_{J_i} F$, and as a consequence $|P_{J_i} F|^2$ into tubes from above. A reference is \cite[Theorem~2.2]{Demeter2020}. For $F \in L^2$, $\text{supp}(\hat{F}) \subseteq \theta$, $\theta$ a rectangle of size $(2 \pi) \cdot ( 2^{-m} \times 2^{-2m})$ into directions $e_{\theta}$, $n_{\theta}$ with $\xi_{\theta}$ at its center choose $\tilde{\chi}_{\theta} \in C^{\infty}_c(\Theta)$, $\tilde{\chi}_{\theta} \equiv 1$ on $\theta$ with $\Theta$ of size $(2 \pi C) \cdot ( 2^{-m} \times 2^{-2m})$ a mild dilation of $\theta$. The normalized exponentials
\begin{equation*}
E_k(\xi)= |\Theta|^{-1/2} e^{ i x_k \cdot (\xi - \xi_{\theta})}, \quad k = (k_1,k_2) \in \Z^2,
\end{equation*}
form an orthonormal basis of $L^2(\Theta)$, where
\begin{equation*}
x_k = (2^m k_1 e_{\theta} + 2^{2m} k_2 n_{\theta})/C.
\end{equation*}
The Fourier transform of an (essentially) normalized packet is given by
\begin{equation*}
\hat{\phi}_k(\xi) = \tilde{\chi}_{\theta}(\xi) E_k(\xi)
\end{equation*}
and the coefficients are correspondingly given by
\begin{equation*}
\alpha_k = \langle F, \phi_k \rangle = \langle \hat{F}, \tilde{\chi}_{\theta} E_k \rangle.
\end{equation*}

In summary,
\begin{proposition}
Under the above assumptions, it holds
\begin{equation}
\label{eq:ExpansionF}
F = \sum_{k \in \Z^2} \langle F, \phi_k \rangle \phi_k = \sum_{k \in \Z^2} \alpha_k \phi_k.
\end{equation}
Moreover,
\begin{equation}
\label{eq:L2NormExpansion}
\| F \|_2^2  = \sum_{k \in \Z^2} |\alpha_k|^2
\end{equation}
and we have the pointwise bound for any $N \geq 1$:
\begin{equation}
\label{eq:PointwiseBound}
|\phi_k(x)| \leq C_N 2^{-\frac{3m}{2}} (1 + d_k(x))^{-N}, \; d_k(x) = 2^{-m} |(x-x_k) \cdot e_{\theta}| + 2^{-2m} |(x-x_k) \cdot n_{\theta}|.
\end{equation}
\end{proposition}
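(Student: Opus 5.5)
The plan is to follow the standard wave packet construction in \cite[Theorem~2.2]{Demeter2020}, working on the Fourier side throughout. First I reduce to a model case. After an affine change of variables (rotation aligning $e_\theta, n_\theta$ with the coordinate axes, translation by $\xi_\theta$, and anisotropic dilation by $2^{m}$ and $2^{2m}$), $\Theta$ becomes the rectangle $[-\pi C,\pi C]^2$. On this rectangle the functions $|\Theta|^{-1/2}e^{i x_k\cdot(\xi-\xi_\theta)}$ are the usual Fourier basis, since $x_k$ becomes the lattice $\Z^2/C$ under the change of variables. The factor $2\pi$ built into the dimensions of $\theta$ is chosen precisely so that the periods match. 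So orthonormality and completeness of $\{E_k\}$ in $L^2(\Theta)$ are just the completeness of Fourier series on a box.

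For the expansion \eqref{eq:ExpansionF} I note that $\hat F$ is supported in $\theta$, where $\tilde\chi_\theta\equiv 1$, so $\hat F=\tilde\chi_\theta\hat F$. Expanding $\hat F\in L^2(\Theta)$ in the basis $E_k$ gives
\begin{equation*}
\hat F=\sum_k\langle\hat F,E_k\rangle E_k .
\end{equation*}
Multiplying by $\tilde\chi_\theta$ then yields $\hat F=\sum_k \langle \hat F,E_k\rangle\,\tilde\chi_\theta E_k$. Next I check that the coefficients agree: $\langle\hat F,E_k\rangle=\langle \hat F,\tilde\chi_\theta E_k\rangle$, because $\tilde\chi_\theta=1$ on $\operatorname{supp}\hat F$. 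This equals $\alpha_k$ up to the Plancherel normalization $(2\pi)^{d}$, which I absorb into the definition of $\phi_k$. Inverting the Fourier transform, with the series converging in $L^2$, gives \eqref{eq:ExpansionF}. The identity \eqref{eq:L2NormExpansion} is Parseval in $L^2(\Theta)$ applied to $\hat F$, again combined with Plancherel.

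The pointwise bound \eqref{eq:PointwiseBound} comes from a direct computation:
\begin{equation*}
\phi_k(x)=(2\pi)^{-2}|\Theta|^{-1/2}\int \tilde\chi_\theta(\xi)e^{i\xi\cdot x}e^{ix_k\cdot(\xi-\xi_\theta)}\,d\xi .
\end{equation*}
This is a modulated translate of $|\Theta|^{-1/2}\check{\tilde\chi}_\theta(x+x_k)$. The sign of the translation depends on the convention, and $x_k$ is relabelled by $k\mapsto -k$ if needed.

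To estimate this kernel, I choose $\tilde\chi_\theta$ as the image of one fixed bump under the affine map above. Then its derivatives satisfy
\begin{equation*}
|\partial_{e_\theta}^a\partial_{n_\theta}^b\tilde\chi_\theta|\lesssim 2^{ma}2^{2mb}.
\end{equation*}
Integrating by parts $N$ times in the directions $e_\theta$ and $n_\theta$ gives
\begin{equation*}
|\check{\tilde\chi}_\theta(y)|\lesssim_N |\Theta|\,(1+2^{-m}|y\cdot e_\theta|+2^{-2m}|y\cdot n_\theta|)^{-N}.
\end{equation*}
Since $|\Theta|\sim 2^{-3m}$, multiplying by $|\Theta|^{-1/2}$ leaves the prefactor $2^{-3m/2}$, which is the bound claimed.

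The main care point is not analytic but bookkeeping: matching the dual lattice $x_k$ with the side lengths of $\Theta$ (including $C$ and $2\pi$) so that $\{E_k\}$ is genuinely orthonormal rather than a frame. I also need to track the sign and Plancherel constants, so that the coefficient $\alpha_k$ in the expansion is exactly $\langle F,\phi_k\rangle$. The scaling in the decay bound also deserves attention: written literally, $d_k$ uses $2^{-m}$ along $e_\theta$, whereas the uncertainty principle gives spatial extent $2^{m}$ along $e_\theta$ and $2^{2m}$ along $n_\theta$. So $d_k$ should be read with the reciprocal side lengths of $\Theta$, and I will follow the same normalization as in the definition of $x_k$.
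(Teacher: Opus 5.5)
Your proposal is correct and follows essentially the same route as the paper: completeness and Parseval for the orthonormal exponentials $E_k$ on $\Theta$, combined with $\tilde{\chi}_\theta \equiv 1$ on $\text{supp}(\hat F)$, and a rescaled integration by parts (non-stationary phase) for the decay of $\phi_k$. The bookkeeping you flag for the Plancherel factor $(2\pi)^2$ and the sign $x_k \mapsto -x_k$ is legitimate and is suppressed in the paper, but your worry about $d_k$ is unfounded: the factor $2^{-m}$ multiplies the spatial displacement, so $d_k$ as written already encodes extent $2^m \times 2^{2m}$ and is exactly the bound your computation of $\check{\tilde{\chi}}_\theta$ produces.
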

\begin{proof}
\eqref{eq:ExpansionF} is immediate in Fourier space by the completeness of $E_k$. More precisely,
\begin{equation*}
\sum_{k \in \Z^2} \alpha_k \hat{\phi}_k = \tilde{\chi}_{\theta} \sum_{k \in \Z^2} \langle \hat{F}, \tilde{\chi}_{\theta} E_k \rangle E_k = \tilde{\chi}_{\theta}^2 \hat{F} = \hat{F} \Rightarrow F = \sum_{k \in \Z^2} \alpha_k \phi_k.
\end{equation*}

Since $\tilde{\chi}_{\theta} \equiv 1$ on $\text{supp}(\hat{F}) \subseteq \theta$, Parseval's theorem yields by \eqref{eq:ExpansionF}
\begin{equation*}
\sum_{k \in \Z^2} |\alpha_k|^2 = \| \tilde{\chi}_{\theta} \hat{F} \|_{L^2}^2 = \| \hat{F} \|_2^2 = \| F \|_2^2.
\end{equation*}
By non-stationary phase estimates, we find that $\phi_k$ is essentially supported on a tube of dimensions $2^m \times 2^{2m}$ into directions $e_{\theta}$, $n_{\theta}$, which is centered at $x_k$, and $\phi_k$ is $L^2$-normalized, which is \eqref{eq:PointwiseBound}.
\end{proof}

We shall moreover label $\phi_k$ as $\phi_T$ for tubes $T \in \mathbb{T}_{\theta}$ of size $2^m \times 2^{2m}$, which tile $\R^2$ and are aligned dual to $\theta$.
 $T$ indicates the essential support of $\phi_T$ centered at $x_k$, which allows us to write:
\begin{equation*}
F = \sum_{T \in \T_{\theta}} \alpha_T \phi_T.
\end{equation*}
Localization properties of wave packet decompositions are moreover discussed in \cite[Section~2]{Demeter2020}.
Below is a precise statement used in this paper, which in turn has several analogs in the literature e.g. \cite[Lemma~3.4]{Guth2018}. 
\begin{lemma}
\label{lem:WavePacketLocalization}
Let $B_R \subseteq \R^2$ be a ball of radius $2^{2m} = R \geq 1$, $\text{supp}(\hat{F}) \subseteq \theta$, $F = \sum_{T \in \T_{\theta}} \alpha_T \phi_T$ like above. Then it holds for any $M \geq 1$
\begin{equation*}
\sum_{\substack{T \in \T_\theta: \\ T \cap B_R \neq \emptyset}} |\alpha_T|^2 \lesssim_M \| F \|_{L^2(w^M_{B_R})}^2
\end{equation*}
for weights $w^M_{B_R} = (1 + R^{-1} \text{dist}(x,B_R))^{-M}$ with high polynomial decay off $B_R$.
\end{lemma}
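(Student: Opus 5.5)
The approach is to recover the coefficients $\alpha_T$ from $F$ by testing against the packets, and to use the Schwartz decay of the packets to localize to $B_R$. Here $T$ has dimensions $2^m \times 2^{2m} = R^{1/2} \times R$.

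\textbf{Step 1: duality.} Write $\alpha_T = \langle F, \phi_T\rangle$. Introduce the weight $W_T(x) = (1 + d_T(x))^{-N}$ from \eqref{eq:PointwiseBound}, with $N$ large. By Cauchy--Schwarz,
\begin{equation*}
|\alpha_T|^2 \le \Big(\int |F|\,|\phi_T|\Big)^2 \lesssim_N 2^{-3m} \Big(\int |F|\, W_T^{1/2}\, W_T^{1/2}\Big)^2 \le 2^{-3m} \|W_T\|_{L^1} \int |F|^2 W_T .
\end{equation*}
Since $\|W_T\|_{L^1} \sim 2^{3m}$ for $N > 2$, this gives $|\alpha_T|^2 \lesssim \int |F|^2 W_T$.

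\textbf{Step 2: summing over tubes.} Summing over $T \in \T_\theta$ with $T \cap B_R \neq \emptyset$, it suffices to show
\begin{equation*}
\sum_{T \cap B_R \neq \emptyset} W_T(x) \lesssim_M w^M_{B_R}(x)
\end{equation*}
pointwise. The tubes meeting $B_R$ are $O(R^{1/2})$ translates in the short direction and $O(1)$ translates in the long direction.

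\textbf{Step 3: the pointwise bound.} Fix $x$ and put $D = \operatorname{dist}(x, B_R)$.
\begin{itemize}
\item If $D \lesssim R$, the tubes form a lattice, so $\sum_T W_T(x) \le \sum_{k \in \Z^2} (1 + |k_1| + |k_2|)^{-N} \lesssim 1$.
\item If $D \gg R$, every tube $T$ meeting $B_R$ lies within distance $O(R)$ of $B_R$. Since $d_T(x) \ge R^{-1}|x - x_T|$ up to constants, we get $d_T(x) \gtrsim R^{-1} D$. Then
\begin{equation*}
\sum_T W_T(x) \le \sum_T (1+d_T(x))^{-N/2} \sup_T (1+d_T(x))^{-N/2} \lesssim (1 + D/R)^{-N/2},
\end{equation*}
where the first factor is $\lesssim 1$ by the lattice sum. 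Choosing $N \ge 2M$ gives the bound $\lesssim w^M_{B_R}(x)$.
\end{itemize}

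Integrating the pointwise bound against $|F|^2$ yields the claim. The main obstacle is Step 3: one must split the decay of $W_T$ between summability over the lattice of tubes and the decay off $B_R$. This works because the metric $d_T$ is at least the isotropic $R^{-1}$-scaled distance.
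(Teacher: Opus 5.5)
Your proof is correct, but it follows a different route from the paper.

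The paper conjugates by the weight. It sets $G=(w^M_{B_R})^{1/2}F$ and $\psi_T=(w^M_{B_R})^{-1/2}\phi_T$ for the tubes meeting $B_R$, and the hypothesis $T\cap B_R\neq\emptyset$ ensures that $\psi_T$ still decays like $|T|^{-1/2}(1+d_T)^{-N+M}$. A Schur test on the Gram matrix $\langle\psi_T,\psi_{T'}\rangle$ then bounds the synthesis operator $c\mapsto\sum_T c_T\psi_T$ on $\ell^2$. Duality gives the Bessel-type inequality $\sum_T|\langle G,\psi_T\rangle|^2\lesssim\|G\|_2^2=\|F\|^2_{L^2(w^M_{B_R})}$.

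You never use almost orthogonality between packets. Cauchy--Schwarz against the $L^1$-normalized envelope gives the stronger tube-by-tube estimate $|\alpha_T|^2\lesssim\int|F|^2W_T$. Everything after that rests only on the bounded overlap of the envelopes $W_T$ for a fixed direction. You split the decay between lattice summability and the lower bound $d_T(x)\geq R^{-1}|x-x_T|\gtrsim R^{-1}\text{dist}(x,B_R)$; this is where $T\cap B_R\neq\emptyset$ enters in your version.

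Your argument is shorter and makes the required decay explicit (e.g. $N\geq\max(2M,5)$, so that both $\|W_T\|_{L^1}\sim 2^{3m}$ and the lattice sum of $(1+d_T)^{-N/2}$ converge). It also never uses the Fourier support of $F$. The paper's frame-type argument works with the Gram matrix rather than pointwise overlap of envelopes, which is the more flexible template in general. For a single direction $\theta$, however, it buys nothing over yours.
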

\begin{proof}
To prove the localized almost orthogonality, we carry out the conjugation
\begin{equation*}
G = (w_{B_R}^M)^{1/2} F, \quad \psi_T =(w_{B_R}^M)^{-1/2} \phi_T \text{ for } T \cap B_R \neq \emptyset.
\end{equation*}
Clearly,
\begin{equation*}
\alpha_T = \langle (w_{B_R}^M)^{1/2} F, (w_{B_R}^M)^{-1/2} \phi_T \rangle = \langle G, \psi_T \rangle.
\end{equation*}
Consider $\mathcal{T}:\ell^2 \to L^2$, $\mathcal{T} c= \sum_{\substack{T \in \T_{\theta}: \\ T \cap B_R \neq \emptyset}} c_T \psi_T$. We record
\begin{equation*}
|\psi_T(x)| \lesssim_{N,M} |T|^{-1/2} (1+d_T(x))^{-N+M}, \quad \sup_T \sum_{T'} |\langle \psi_T, \psi_{T'} \rangle| \lesssim 1.
\end{equation*}
This follows from the tubes intersecting $B_R$ and decaying much more rapidly away from $B_R$ than the weight. By Young's inequality we find the boundedness of $\mathcal{T}$. Since the norm of the adjoint coincides with the norm of the operator, we find
\begin{equation*}
\langle \mathcal{T} c, G \rangle = \sum_T c_T \langle \psi_T , G \rangle, \quad (\mathcal{T}^* G)_m = \langle \psi_T, G \rangle.
\end{equation*}
We find as a consequence of the boundedness of $\mathcal{T}^*$:
\begin{equation*}
\sum_T |\langle \psi_T, G \rangle|^2 \lesssim_M \| G \|_2^2 = \int |F|^2 w^M_{B_R}. \qedhere
\end{equation*}

\end{proof}

\subsection{The locally constant property}

It is a widely accepted heuristic that functions with Fourier support in a convex set $\theta$ are ``locally constant" on the dual set (cf. \cite{TaoVargas2000,BourgainGuth2011}). We make this rigorous by a \emph{scattered modulation sum}, which is obtained from the Fourier series expansion. Here we introduce notation similar to \cite{Lee2018} (which is slightly changed for our setup).

Let $d \in \N$, $\sigma > 0$, and large constants $M \geq 200 d$, $C_M \geq 1$. For $\xi_0 \in \R^d$ let $\mathfrak{q}(\xi_0,\sigma)$ be the cube with side length $\sigma$, centered at $\xi_0$. For $l \in  \Z^d$ let
\begin{equation*}
A_l = C_M (1+|l|)^{-M}, \quad \tau^{\sigma}_l f(x) = f(x-\sigma^{-1} l).
\end{equation*}
For $\sigma >0$, we define the scattered modulation sums $[F]_{\sigma}$, $[[F]]_{\sigma}$ by
\begin{equation*}
[F]_{\sigma}(x) = \sum_{l \in \Z^d} A_l |\tau^{\sigma}_l F(x)|, \quad [[F]]_{\sigma}(x) = \sum_{l_1,l_2 \in \Z^d} A_{l_1} A_{l_2} |\tau^{\sigma}_{l_1+l_2} F(x)|.
\end{equation*}
S. Lee recorded the following by Fourier series expansion:
\begin{lemma}[{\cite[Lemma~2.13]{Lee2018}}]
\label{lem:ScatteredModulationSum}
Let $\xi_0,x_0 \in \R^d$. Suppose that $F$ is a function with $\text{supp}(\hat{F}) \subseteq \mathfrak{q}(\xi_0,\sigma)$. Then, if $x \in \mathfrak{q}(x_0,1/\sigma)$, we find
\begin{equation*}
|F(x)| \leq [F]_{\sigma}(x_0) \leq [[F]]_{\sigma}(x).
\end{equation*}
\end{lemma}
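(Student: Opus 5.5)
The plan is to prove the first inequality by expanding the phase factor $e^{i y\cdot \eta}$ in a Fourier series adapted to the frequency cube, and then to obtain the second inequality by applying the first one to translates of $F$. First normalize: write $x = x_0 + y$ with $y \in \mathfrak{q}(0,1/\sigma)$, so $|y_j| \leq (2\sigma)^{-1}$. By Fourier inversion,
\begin{equation*}
F(x_0+y) = \frac{e^{i y\cdot \xi_0}}{(2\pi)^d} \int e^{i x_0 \cdot \xi} e^{i y\cdot(\xi-\xi_0)} \hat{F}(\xi)\, d\xi .
\end{equation*}
Fix once and for all $\chi \in C^\infty_c(\mathfrak{q}(0,4))$ with $\chi \equiv 1$ on $\mathfrak{q}(0,1)$, and set $\chi_\sigma(\eta) = \chi(\eta/\sigma)$. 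Then $\chi_\sigma \equiv 1$ on $\mathfrak{q}(0,\sigma)$, which contains $\operatorname{supp} \hat F(\cdot+\xi_0)$, and $\operatorname{supp}\chi_\sigma$ lies strictly inside the period cell $\mathfrak{q}(0,2\pi\sigma)$ because $4 < 2\pi$. Hence we may insert $\chi_\sigma(\xi-\xi_0)$ into the integral.

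The key step is the expansion of $\eta \mapsto e^{i y\cdot\eta}\chi_\sigma(\eta)$, extended $2\pi\sigma$-periodically, in the Fourier series
\begin{equation*}
e^{i y\cdot\eta}\chi_\sigma(\eta) = \sum_{l\in\Z^d} c_l(y)\, e^{-i\sigma^{-1} l\cdot \eta}, \qquad \eta \in \mathfrak{q}(0,2\pi\sigma).
\end{equation*}
Since $\operatorname{supp}\chi_\sigma$ is compactly inside the cell, the periodized function is smooth and this holds on $\operatorname{supp}\chi_\sigma$. After the substitution $\eta = \sigma\eta'$ and $y = y'/\sigma$ with $|y'_j|\leq 1/2$, we get
\begin{equation*}
c_l(y) = (2\pi)^{-d}\int e^{i y'\cdot\eta'}\chi(\eta')\, e^{i l\cdot\eta'}\, d\eta',
\end{equation*}
which is independent of $\sigma$. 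Integrating by parts $M$ times gives $|c_l(y)| \leq C_M(1+|l|)^{-M}$ uniformly in $|y'|\lesssim 1$, with $C_M$ depending only on $\chi$, $d$ and $M$. This is the constant to take in the definition of $A_l$. Plugging the expansion in and using that the factor $e^{-i\sigma^{-1}l\cdot(\xi-\xi_0)}$ corresponds to the translation $\tau^\sigma_l$ (up to a unimodular constant), we get
\begin{equation*}
F(x_0+y) = e^{i y\cdot\xi_0}\sum_l c_l(y)\, e^{i\sigma^{-1}l\cdot\xi_0}\, F(x_0-\sigma^{-1}l),
\end{equation*}
and hence $|F(x)|\leq \sum_l A_l |\tau^\sigma_l F(x_0)| = [F]_\sigma(x_0)$.

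For the second inequality, note that $\tau^\sigma_{l_1}F$ has the same Fourier support as $F$. Also, $x\in\mathfrak{q}(x_0,1/\sigma)$ is equivalent to $x_0\in\mathfrak{q}(x,1/\sigma)$. Applying the first inequality to $\tau^\sigma_{l_1}F$ with the roles of $x$ and $x_0$ swapped gives
\begin{equation*}
|\tau^\sigma_{l_1}F(x_0)| \leq \sum_{l_2} A_{l_2}\,|\tau^\sigma_{l_1+l_2}F(x)| .
\end{equation*}
Multiplying by $A_{l_1}$ and summing over $l_1$ yields $[F]_\sigma(x_0)\leq [[F]]_\sigma(x)$.

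The only delicate point is the coefficient decay: the period cell must strictly contain the support of the cutoff, and the bound on $c_l(y)$ must be uniform both in $\sigma$ and in $y\in\mathfrak{q}(0,1/\sigma)$. Both follow from the scaling reduction to $\sigma=1$, so I expect no real obstacle. Convergence of the sums is immediate since $M\geq 200d$.
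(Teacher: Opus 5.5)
Your proof is correct and follows the same Fourier-series route the paper relies on when it cites Lee's Lemma~2.13 (the paper gives no further details). Expanding $e^{iy\cdot\eta}\chi_\sigma(\eta)$ in a $2\pi\sigma$-periodic Fourier series with $\sigma$-independent, rapidly decaying coefficients gives the first inequality, and applying it to $\tau^\sigma_{l_1}F$ with $x$ and $x_0$ interchanged gives the second. The only step left implicit is interchanging the $l$-sum with the Fourier integral; this is harmless because $\sum_l |c_l(y)|<\infty$ and $\hat F\in L^1$ for the compactly Fourier-supported $L^2$ functions to which the lemma is applied.
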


\subsection{Local $L^2$-orthogonality}

We recall local $L^2$-orthogonality, cf. \cite[Prop.~10.20]{Demeter2020} and note that the degree of the power weight is not decreased when invoking it.
\begin{lemma}
\label{lem:LocalL2Orthogonality}
Let $M=M(d)\geq 10d$ and $w_R(x) = \big( 1 + \frac{|x-x_0|}{R} \big)^{-M}$. Let $(F_{\theta})_{\theta \in \mathcal{F}} \subseteq L^2(\R^d)$, $\text{supp}(\hat{F}_{\theta}) \subseteq \theta \subseteq \R^d$ with $(\theta + B_d(0,c R^{-1}))_{\theta \in \mathcal{F}}$ finitely overlapping. Then
\begin{equation*}
\| \sum_{\theta \in \mathcal{F}} F_{\theta} \|^2_{L^2(w_R)} \lesssim_{M,d} \sum_{\theta \in \mathcal{F}} \| F_{\theta} \|^2_{L^2(w_R)}.
\end{equation*}
\end{lemma}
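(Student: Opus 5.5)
The plan is to prove Lemma~\ref{lem:LocalL2Orthogonality} by moving the weight onto the Fourier side. We replace $w_R$ by a comparable weight whose Fourier transform is compactly supported, and then use the finite overlap of the enlarged supports.

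\textbf{Step 1: an auxiliary weight.} We construct $\tilde w_R \ge 0$ with $\tilde w_R \sim_{M,d} w_R$ pointwise and $\text{supp}(\widehat{\tilde w_R}) \subseteq B_d(0,cR^{-1})$. By translation and scaling ($x \mapsto x_0 + Rx$) it suffices to treat $R=1$ and $x_0=0$.

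To build it, take a Schwartz function $\psi$ with $\hat\psi$ supported in $B(0,c/2)$ and $|\psi|\ge 1$ on some ball $B(0,\rho)$; this is possible by choosing $\hat\psi$ real and even with $\hat\psi \geq 0$. Then set
\begin{equation*}
\tilde w = \sum_{k \in \Z^d} (1+|k|)^{-M} |\psi(\cdot - \rho k/\sqrt{d})|^2 .
\end{equation*}
Each summand $|\psi(\cdot-a)|^2 = \psi(\cdot-a)\overline{\psi(\cdot-a)}$ has Fourier support in $B(0,c)$. The sum is bounded above by $w_1$ because $\psi$ decays rapidly, and the decay exponent needed is $M$ plus a margin, which the condition $M \ge 10d$ permits. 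It is bounded below by $w_1$ since $|\psi|\geq 1$ near each lattice point. This is the step I expect to be the main obstacle: getting a two-sided comparability with the same power $M$, so that the degree of the weight is not lost, as the paper emphasizes.

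\textbf{Step 2: orthogonality.} Write
\begin{equation*}
\| \textstyle\sum_\theta F_\theta\|^2_{L^2(\tilde w_R)} = \sum_{\theta,\theta'} \int F_\theta \overline{F_{\theta'}}\, \tilde w_R .
\end{equation*}
By Plancherel, each term equals a pairing of $\hat F_\theta * \widehat{\tilde w_R}$ with $\hat F_{\theta'}$, up to constants. Hence it vanishes unless $(\theta + B(0,cR^{-1})) \cap \theta' \neq \emptyset$.

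\textbf{Step 3: Cauchy--Schwarz.} For the surviving pairs, bound
\begin{equation*}
\Big|\int F_\theta \overline{F_{\theta'}}\tilde w_R\Big| \le \tfrac12\big(\|F_\theta\|^2_{L^2(\tilde w_R)} + \|F_{\theta'}\|^2_{L^2(\tilde w_R)}\big).
\end{equation*}
By the finite overlap hypothesis, each $\theta$ pairs with only $O(1)$ sets $\theta'$. This gives
\begin{equation*}
\| \textstyle\sum_\theta F_\theta\|^2_{L^2(\tilde w_R)} \lesssim \sum_\theta \|F_\theta\|^2_{L^2(\tilde w_R)}.
\end{equation*}
Here we use that the overlap hypothesis, stated for the enlarged sets, controls the intersections $(\theta + B)\cap\theta'$; if the neighborhood produced in Step 2 is larger than the one in the hypothesis, shrink $c$ in Step 1 accordingly.

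Finally, replace $\tilde w_R$ by $w_R$ on both sides using the comparability from Step 1. The implied constants depend only on $M$ and $d$.
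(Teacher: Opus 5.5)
There is a gap in Step 3. The hypothesis only says that the enlarged sets $\theta+B_d(0,cR^{-1})$ have bounded pointwise multiplicity: each point lies in $O(1)$ of them. It does not say that each $\theta$ interacts with only $O(1)$ other sets $\theta'$, and that implication is false in general. Take $d=1$, $R=1$, $c<\tfrac14$, $\theta_0=[0,L]$ and $\theta_j=[j-1,j-\tfrac12]$ for $1\le j\le L$. Every point lies in at most two enlarged sets, yet $\theta_0$ interacts with all $L$ sets $\theta_j$. Your pairwise bound $\tfrac12\big(\|F_\theta\|^2_{L^2(\tilde w_R)}+\|F_{\theta'}\|^2_{L^2(\tilde w_R)}\big)$ then puts a factor $L$ in front of $\|F_{\theta_0}\|^2_{L^2(\tilde w_R)}$, so the constant is not uniform. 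Expanding $|\sum_\theta F_\theta|^2$ and counting interacting pairs therefore needs an extra bounded-degree assumption. That assumption holds in the paper's applications, where all $\theta$ have comparable size, by a volume count, but it is not implied by the lemma as stated.

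The repair is to use the sum-of-squares structure of the weight you built instead of expanding the square. Set $\psi_{k,R}(x)=\psi\big(\frac{x-x_0}{R}-\frac{\rho k}{\sqrt d}\big)$, so that $\tilde w_R=\sum_k(1+|k|)^{-M}|\psi_{k,R}|^2$. Tonelli gives
\[
\Big\|\sum_\theta F_\theta\Big\|^2_{L^2(\tilde w_R)}=\sum_{k\in\Z^d}(1+|k|)^{-M}\Big\|\sum_\theta F_\theta\,\psi_{k,R}\Big\|^2_{L^2}.
\]
Each $\widehat{F_\theta\psi_{k,R}}$ is supported in $\theta+B_d(0,cR^{-1})$. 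By Plancherel and the pointwise inequality $|\sum_\theta g_\theta(\xi)|^2\le C_{\mathrm{ov}}\sum_\theta|g_\theta(\xi)|^2$, which uses exactly the multiplicity bound at each frequency $\xi$, the inner norm is at most $C_{\mathrm{ov}}\sum_\theta\|F_\theta\psi_{k,R}\|_2^2$. Resumming in $k$ and applying your Step 1 comparability finishes the proof.

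This is the standard argument behind Demeter's Proposition 10.20, which the paper cites instead of proving. There, an $L^2$ estimate with a Fourier-compactly supported bump on a single ball is summed over translated balls with coefficients $(1+|k|)^{-M}$. Your $\tilde w$ is exactly that sum, which is why the degree $M$ is not lost.

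Step 1 is fine and less delicate than you suggest. The upper bound only needs $|\psi(y)|\lesssim_N(1+|y|)^{-N}$ with $N\ge M+d+1$, which holds because $\psi$ is Schwartz. The hypothesis $M\ge 10d$ is not where that margin comes from.
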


\section{The fractal square function estimate}
\label{section:SquareFunction}
This section is devoted to the proof of Theorem \ref{thm:FractalSquareFunction}. The key point is to obtain a bilinear square function estimate through iterated application of the fractal bilinear Kakeya estimate:
\begin{equation*}
\big( \int_{Q_{\mu}^r} |F_1 F_2|^2 \big) \lesssim \log(R)^c \prod_{M \in [R^{\frac{1}{2}}]_d} \kappa_M  \int \prod_{i=1}^2 \big( \sum_{J_i \in \mathbb{I}_{r/2}(I_i)} |P_{J_i} F_i|^2 \big) \tilde{w}_{Q_{\mu}^r}
\end{equation*}
The assumptions are $\text{supp}(\hat{F}_i) \subseteq (I_i \times \R) \cap \mathcal{N}_{R^{-1}}(\mathbb{P}^1)$ and $|I_1| \sim |I_2| \sim \text{dist}(I_1, I_2) \sim 1$, and we let $R = 2^r \geq 2$. $c \geq 1$ is a constant uniform in the remaining parameters. Moreover,
\begin{equation*}
\tilde{w}_{Q^r_{\mu}} = \sum_{Q^{r/2} \in \mathcal{Q}^{r/2,r}} w_{Q^r_{\mu}}
\end{equation*}
is a weight based on $w$ in $Q^r$, which is rapidly decaying off $Q^r$, with more notation introduced below. To reduce the proof of Theorem \ref{thm:FractalSquareFunction} to the above, a standard linear-to-bilinear reduction via Whitney decomposition and almost orthogonality is carried out.

\subsection{The bilinear fractal square function iteration}

Here the fractal bilinear Kakeya estimate crucially comes into play.
Let $N \in 2^{\N}$, $r \geq 1$, $R=2^r$, and $w \geq 0$ a product weight like the assumptions. 

 Let $F: \R^2 \to \C$, $\text{supp}(\hat{F}) \subseteq \mathcal{N}_{\delta}(\mathbb{P}^1)$. Let $I_1$, $I_2 \subseteq [-2,2]$ be two intervals with $|I_1| \sim |I_2| \sim \text{dist}(I_1,I_2) \sim 1$. Given $m,r \geq 1$, $t \geq 2$, and $F: \R^2 \to \C$, we let
\begin{equation*}
D_t(m,Q^r_{\mu},F) = \big[ \big( \sum_{I \in \mathbb{I}_m(I_1)} \| P_I F \|^2_{L^t_{\sharp}(w_{Q^r_{\mu}})} \big) \big( \sum_{I \in \mathbb{I}_m(I_2)} \| P_I F \|^2_{L^t_{\sharp}(w_{Q^r_{\mu}})} \big) \big]^{\frac{1}{4}}.
\end{equation*}
$w_{Q^r_{\mu}}$ denotes a weight which is rapidly decaying off $Q^r_{\mu}$. The weight is defined as
\begin{equation*}
w_{Q^r_{\mu}} = \sum_{k \in \Z^2} (1+|k|)^{-100} (w \chi_{Q^r})(x + 2^{r} k) .
\end{equation*}
This weight is still local in the sense that it only depends on the properties on $w$ on $Q^r$, but it takes into account contributions off $Q^r$ as well. More technically speaking, $w_{Q^r_{\mu}}$ interacts well with the scattered modulation sum.

Let $\mathcal{Q}^r = \{ Q^r \} $ consist of the single cube $Q^r$.
For scales $m \in \{ \frac{r}{2}, \, \frac{r}{2^2}, \, \ldots \frac{r}{2^s} \}$, $s \in \N_0$ with $s$ chosen maximally such that $\frac{r}{2^s} \geq 2$, we consider families $\mathcal{Q}^{m}(Q^{2m})$ of finitely overlapping cubes $Q^m \subseteq Q^{2m}$, which are covering $Q^{2m} \in \mathcal{Q}^{2m}$. Next, we let for $m < r$, $\mathcal{Q}^{m,r} = \bigcup_{Q^{2m} \in \mathcal{Q}^{2m}} \mathcal{Q}^m(Q^{2m})$, that are the cubes of size $2^m$ which cover $Q^r$. The number of overlapping cubes is now bounded by $C\log(r/m)$, as every intermediate scale yields finite overlap. Second, we let for $p \in [4,6]$
\begin{equation*}
A_p^p(m,Q^r_{\mu},F) = \frac{1}{w(Q^r)} \sum_{Q^m_{\mu} \in \mathcal{Q}^r_{\mu}} w(Q^m) D^p_2(m,Q^m_{\mu},F).
\end{equation*}

In the following we suppose that $\delta \leq 2^{-r}$ and the expression is regarded as completely decoupled when $2^{-m} \sim \delta^{1/2}$. As we fix $F$ in the following, this will be suppressed in notation for $A_p$ and $D_p$. Leaning on terminology from classical decoupling theory, the following is referred to as fractal Kakeya leap:
\begin{proposition}[Fractal~Kakeya~leap]
\label{prop:BilinearSquareFunction}
With notation like above, $M=2^m$, and $r \geq 2m$, we have
\begin{equation*}
A_4^4(m,Q^r_{\mu},F) \lesssim \kappa_{M^2} A_4^4(2m,Q^r_{\mu},F).
\end{equation*}
\end{proposition}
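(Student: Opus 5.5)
The plan is to prove the leap cube by cube: for a single cube $Q^{2m} \in \mathcal{Q}^{2m,r}$ with $M=2^m$ I would show
\begin{equation*}
\sum_{Q^m \in \mathcal{Q}^m(Q^{2m})} w(Q^m) D_2^4(m,Q^m_\mu) \lesssim \kappa_{M^2}\, w(Q^{2m}) D_2^4(2m,Q^{2m}_\mu).
\end{equation*}
Summing over $Q^{2m}$ and dividing by $w(Q^r)$ then gives the claim. The finite overlap of $\mathcal{Q}^m(Q^{2m})$ inside each $Q^{2m}$ costs only a constant; the $\log$ overlaps are absorbed, since both sides are indexed over the same nested families.

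\textbf{Step 1: expand the left side.} On $Q^m$, write $D_2^4(m,Q^m_\mu) = \prod_{i=1}^2 \sum_{I \in \mathbb{I}_m(I_i)} \| P_I F\|^2_{L^2_\sharp(w_{Q^m_\mu})}$. For each $I \in \mathbb{I}_m(I_i)$, decompose $P_I F = \sum_{J \in \mathbb{I}_{2m}(I)} P_J F$. By local $L^2$-orthogonality (Lemma \ref{lem:LocalL2Orthogonality}), adapted to the weight $w_{Q^m_\mu}$, we get $\|P_I F\|^2_{L^2_\sharp(w_{Q^m_\mu})} \lesssim \sum_J \|P_J F\|^2_{L^2_\sharp(w_{Q^m_\mu})}$. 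This adaptation works because $w_{Q^m_\mu}$ is a sum of $w\chi_{Q^m}$ translated at scale $2^m$ with polynomial decay, and on the relevant scale it is dominated by a power weight $w_{M}$ times the local density. Each $P_J F$ has Fourier support in a $M^{-2}\times M^{-4}$ cap. Applying the wave packet decomposition at scale $2^{2m}$ and the locally constant property (Lemma \ref{lem:ScatteredModulationSum}), $|P_J F|^2$ is dominated on $Q^{2m}$ by $g_J=\sum_{T} c_T \chi_T$, with tubes of size $M \times M^2$ in direction dual to $J$. Thus the left side is bounded by $\sum_{Q^m} w(Q^m)\, \dashint_{Q^m_\mu} G_1 \cdot \dashint_{Q^m_\mu} G_2$, where $G_i = \sum_{J \in \mathbb{I}_{2m}(I_i)} [[ |P_JF|^2 ]]$. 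Here the averages should be read with the mildly enlarged cubes.

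\textbf{Step 2: Kakeya.} The tubes for $i=1,2$ are transversal because $\text{dist}(I_1,I_2)\sim 1$. Since $G_i$ is essentially constant on $Q^m$ (the tubes have width $M$), the sum $\sum_{Q^m} w(Q^m)\dashint G_1 \dashint G_2$ is comparable to $\int_{Q^{2m}_\mu} G_1 G_2$. Proposition \ref{prop:FractalKakeya} with $N = M^2$ then gives
\begin{equation*}
\int_{Q^{2m}_\mu} G_1 G_2 \lesssim \kappa_{M^2}\, w(Q^{2m}) \dashint_{3Q^{2m}_\mu} G_1 \dashint_{3Q^{2m}_\mu} G_2,
\end{equation*}
because the supremum there is exactly the quantity in \eqref{eq:kappa} at scale $M^2$. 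Finally, $\dashint_{3Q^{2m}_\mu} G_i \lesssim \sum_{J} \|P_J F\|^2_{L^2_\sharp(w_{Q^{2m}_\mu})}$, using \eqref{eq:MildContinuity} to compare the measures of $3Q^{2m}$ and $Q^{2m}$ and recombining the scattered translates into the decaying weight $w_{Q^{2m}_\mu}$. The resulting product is $D_2^4(2m,Q^{2m}_\mu)$.

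\textbf{Main obstacle.} The hardest part is the weight bookkeeping in Steps 1–2. The Schwartz tails of the wave packets and the scattered modulation sums have to be controlled by the specific weights $w_{Q^m_\mu}$ and $w_{Q^{2m}_\mu}$ rather than by Lebesgue power weights, while keeping the $\mu$-normalizations compatible. I would handle this by expanding every translate $\tau_l$ as a separate cube $Q^{2m}+2^{2m}k$, applying Proposition \ref{prop:FractalKakeya} there, and summing with the factors $(1+|k|)^{-100}$ and $A_l$. The doubling condition \eqref{eq:MildContinuity} then bounds the ratios $w(Q^{2m}+2^{2m}k)/w(Q^{2m})$ polynomially in $|k|$, and this polynomial growth is absorbed by the decay.
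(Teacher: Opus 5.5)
Your Step 1 is where the argument breaks. You invoke local $L^2$-orthogonality on the \emph{small} cube $Q^m$ to split $P_IF$, $I\in\mathbb{I}_m$, into the pieces $P_JF$, $J\in\mathbb{I}_{2m}(I)$. At localization scale $2^m$, Lemma \ref{lem:LocalL2Orthogonality} requires the $c2^{-m}$-neighborhoods of the frequency pieces to be finitely overlapping. But the $\sim M$ intervals $J\subseteq I$ all lie in a single interval of length $M^{-1}$, so these neighborhoods overlap $\sim M$ times. The inequality $\|P_IF\|^2_{L^2_\sharp(w_{Q^m_\mu})}\lesssim\sum_J\|P_JF\|^2_{L^2_\sharp(w_{Q^m_\mu})}$ is then false in general.

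For instance, let the $P_JF$ be modulated bumps at scale $M^2$ with frequencies $(\xi_J,\xi_J^2)$, with unimodular coefficients aligning their phases at the center of $Q^m$. Since the $\xi_J$ differ by at most $M^{-1}$, the relative phases stay $O(c)$ on a $cM$-neighborhood of the center. That neighborhood carries a fixed fraction of the weight by \eqref{eq:MildContinuity}, so the left side is $\gtrsim M^2$ while the right side is $\lesssim M$. Thus Step 1 is no better than Cauchy--Schwarz and loses a factor $M$ in each of the two factors of $D_2^4$.

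The same misordering shows in your tube claim. For $J\in\mathbb{I}_{2m}$ the Fourier support of $P_JF$ has diameter $\lesssim M^{-2}$ (as $r\ge 2m$), so $|P_JF|$ is essentially constant on all of $Q^{2m}$. The $M\times M^2$ tubes you feed into Proposition \ref{prop:FractalKakeya} are the wave packets of the $\mathbb{I}_m$-pieces, not of the $\mathbb{I}_{2m}$-pieces. Granting Step 1, transversality and the Kakeya estimate would play no role at all; the whole difficulty has been pushed into the invalid orthogonality step.

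The missing idea is the order of operations, which is precisely what the Kakeya (ball inflation) step is for, and it is how the paper proceeds.
\begin{enumerate}
\item Keep the coarse pieces $P_IF$, $I\in\mathbb{I}_m$, on $Q^m$. They live on $M^{-1}\times M^{-2}$ caps. So the wave packet decomposition at scale $m$, Jensen's inequality and the scattered modulation sum (Lemma \ref{lem:ScatteredModulationSum}) give $\sum_{I\in\mathbb{I}_m(I_i)}\|P_IF\|^2_{L^2_\sharp(w_{Q^m_\mu})}\lesssim G_i(x)$ for every $x\in Q^m$. Here $G_i=\sum_Ta_T\tilde{\phi}_T$ is essentially a superposition of indicators of $M\times M^2$ tubes, transversal for $i=1,2$.
\item Summing over $Q^m\subseteq Q^{2m}$ and applying Proposition \ref{prop:FractalKakeya} with $N=M^2$ yields $\kappa_{M^2}\prod_i\dashint_{3Q^{2m}_\mu}G_i$.
\item By \eqref{eq:IntersectionEstII} the fractal average of a tube is comparable to its Lebesgue average, so $\dashint_{3Q^{2m}_\mu}G_i\lesssim 2^{-4m}\sum_Ta_T$. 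Lemma \ref{lem:WavePacketLocalization} bounds this by $\sum_{I\in\mathbb{I}_m(I_i)}\|P_IF\|^2_{L^2_\sharp(w^K_{Q^{2m}})}$, with a Lebesgue weight at scale $M^2$.
\item Only now, on the large cube, is local $L^2$-orthogonality legitimate for refining $\mathbb{I}_m$ to $\mathbb{I}_{2m}$.
\item Since each $P_JF$, $J\in\mathbb{I}_{2m}$, is locally constant at scale $M^2$, a scattered modulation sum at scale $2^{-2m}$ converts $w^K_{Q^{2m}}$ back into $w_{Q^{2m}_\mu}$.
\end{enumerate}
Your cube-by-cube reduction and your plan for the tails are compatible with this; it is Step 1 that has to be replaced.
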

We clarify that in Section \ref{section:FractalDecoupling} a more general argument proves Proposition \ref{prop:PartialDecoupling}, which recovers the above. The argument below is more direct and included for comparison reasons.

\subsubsection{An outline of the argument}

We first provide a sketch of the argument, which we will make rigorous subsequently. It turns out that by elementary arguments, it suffices to consider $r=2m$.

\emph{Wave packet decomposition.} As a heuristic, we suppose that the weight $w_{Q^{m}_{\mu}}$ coincides with $w \chi_{Q^m}$. Let $x \in Q^r$. Consider the wave packet decomposition 
\begin{equation*}
P_{J_i} F = \sum_{T_i \in \T_{\theta_{J_i}}} \alpha_{T_i} \phi_{T_i}(x)
\end{equation*}
with $\theta_{J_i}$ the $2^{-m} \times 2^{-2m}$-rectangle containing the Fourier support of $\text{supp}(\widehat{P_{J_i} F})$. For brevity let $\T_{J_i} = \T_{\theta_{J_i}}$. As another heuristic, we discard the tubes not intersecting with $Q^r$. We let
\begin{equation*}
P_{J_i} F \approx \sum_{T_i \in \T'_{J_i}} \alpha_{T_i} \phi_{T_i}(x),
\end{equation*}
in which expansion we consider only $T_i \in \T'_{J_i}$ for which $T_i \cap Q^r \neq \emptyset$. We collect the effectively contributing tubes in $\T' = \bigcup_{J_i} \T_{J_i}'$. For the contributing wave packets, we apply Jensen's inequality to find
\begin{equation}
\label{eq:JensenHeuristic}
\begin{split}
|P_{J_i} F|^2(x) &\leq \big| \sum_{T_i \in \T_{J_i}'} |\alpha_{T_i} \phi_{T_i}(x)| \big|^2 = \big| \sum_{T_i \in \T_{J_i}'} |\alpha_{T_i} |\phi_{T_i}(x)| 2^{\frac{3m}{2}} \big|^2 2^{-3m} \\
&\lesssim \sum_{T_i \in \T_{J_i}'} |\alpha_{T_i}|^2 |\phi_{T_i}(x)| 2^{\frac{-3m}{2}} = \sum_{T_i \in \T_{J_i}'} a_{T_i} \tilde{\phi}_{T_i}(x).
\end{split}
\end{equation}
Above we let $a_{T_i} = |\alpha_{T_i}|^2$ and $\tilde{\phi}_{T}(x) = 2^{-\frac{3m}{2}} |\phi_{T}(x)|$ for brevity. Clearly, $\tilde{\phi}_T$ has the same local constant properties and is $L^1$-normalized. By Lemma \ref{lem:WavePacketLocalization} we have
\begin{equation*}
\sum_{T_i \in \T_{J_i}'} a_{T_i} \lesssim \sum_{T_i : T_i \cap Q^r \neq \emptyset} |\alpha_{T_i}|^2 \lesssim_K \| P_{J_i} F \|^2_{L^2(w^K_{Q^r})}.
\end{equation*}

\emph{The locally constant property.} By the above we find
\begin{equation*}
\begin{split}
&\quad \big( \sum_{J_1 \in \mathbb{I}_m(I_1)} \| P_{J_1} F \|^2_{L^2_{\sharp}(Q^m_{\mu})} \big) \big( \sum_{J_2 \in \mathbb{I}_m(I_2)} \| P_{J_2} F \|^2_{L^2_{\sharp}(Q^m_{\mu})} \big) \\
&\lesssim \dashint_{Q^m_{\mu}} \sum_{T_1} a_{T_1} \tilde{\phi}_{T_1} \dashint_{Q^m_{\mu}} \sum_{T_2} b_{T_2} \tilde{\phi}_{T_2}.
\end{split}
\end{equation*}

By the Fourier support properties of $\phi_T$, we have that heuristically $\tilde{\phi}_T$ is essentially constant on $Q^m$. For this reason
\begin{equation*}
\dashint_{Q^m_{\mu}} \sum_{T_1 \in \T_1'} a_{T_1} \tilde{\phi}_{T_1} \dashint_{Q^m_{\mu}} \sum_{T_2 \in \T_2'} b_{T_2} \tilde{\phi}_{T_2} \sim \dashint_{Q^m_{\mu}} \sum_{T_i \in \T_i'} a_{T_1} b_{T_2} \tilde{\phi}_{T_1} \tilde{\phi}_{T_2},
\end{equation*}
which suggests
\begin{equation}
\label{eq:HeuristicA4}
A_4^4(m,Q^{2m}_{\mu},F) \approx \dashint_{Q^r_{\mu}} \big( \sum_{T_1 \in \T_1'} a_{T_1} \tilde{\phi}_{T_1} \big) \big( \sum_{T_2 \in \T_2'} b_{T_2} \tilde{\phi}_{T_2} \big).
\end{equation}

\emph{The fractal bilinear Kakeya estimate.} To bring the fractal bilinear Kakeya estimate to bear, we use the locally constant property of $\tilde{\phi}_{T_1}$. Tile $\R^2$ with copies of $T_i$ and for $\tilde{T}_i$ a $(2^m k_1,2^{2m} k_2)$-translate (into short/long side) of $T_i$, and we let $\text{dist}_{par}(\tilde{T}_i,T_i) = |k_1|+|k_2|$. More precisely, we estimate
\begin{equation}
\label{eq:PhiEstimate}
\tilde{\phi}_{T_i} \lesssim_K 2^{-3m} \sum_{\tilde{T}_i \parallel T_i} 1_{\tilde{T}_i}(x) (1+\text{dist}_{par}(\tilde{T}_i,T_i))^{-K}.
\end{equation}

Consequently, by changing the summation, we find for $x \in Q^r$
\begin{equation*}
\begin{split}
\sum_{T_1} a_{T_1} \tilde{\phi}_{T_1}(x) &\lesssim_K \sum_{T_1} 2^{-3m} a_{T_1} \sum_{\tilde{T}_1 \parallel T_1} 1_{\tilde{T}_1}(x) (1+\text{dist}_{par}(\tilde{T}_1,T_1))^{-K} \\
&\lesssim \sum_{\tilde{T}_1} 2^{-3m} 1_{\tilde{T}_1}(x) \sum_{T_1 \parallel \tilde{T}_1} a_{T_1} (1+\text{dist}_{par}(\tilde{T}_1,T_1))^{-K} \\
&\lesssim \sum_{\tilde{T}_1} 2^{-3m} 1_{\tilde{T}_1}(x) \tilde{a}_{\tilde{T}_1}, \quad \tilde{a}_{\tilde{T}_1} := \sum_{T_1 \parallel \tilde{T}_1} a_{T_1} (1+\text{dist}_{par}(\tilde{T}_1,T_1))^{-K}.
\end{split}
\end{equation*}
The resulting expression is amenable to the fractal bilinear Kakeya inequality because the tubes are transversely interacting by the Fourier support condition. Let
\begin{equation*}
g_1(x) = 2^{-3m} \sum_{T_i: T_i \cap Q^r \neq \emptyset} \tilde{a}_{T_i} 1_{T_i}, \quad g_2(x) = 2^{-3m} \sum_{T_i : T_i \cap Q^r \neq \emptyset} \tilde{b}_{T_i} 1_{T_i}, \quad x \in Q^r.
\end{equation*}
We apply Proposition \ref{prop:FractalKakeya} to find
\begin{equation}
\label{eq:BilinearKakeya}
\dashint_{Q^r_{\mu}} g_1 g_2 dx \lesssim \kappa_{M^2} \dashint_{3 Q^r_{\mu}} g_1 \dashint_{3 Q^r_{\mu}} g_2.
\end{equation}

We integrate and use the convolution structure of the coefficients to find
\begin{equation*}
\begin{split}
\dashint_{3 Q^r_{\mu}} \sum_{T_1} \tilde{a}_{T_1} 1_{T_1} &\lesssim 2^{-4 m} \sum_{T_1} \tilde{a}_{T_1} \\
 &\lesssim 2^{-4m} \sum_{T_1: T_1 \cap Q^r \neq \emptyset} a_{T_1} \lesssim_K \frac{1}{|Q^r|} \| P_{I_1} F \|_{L^2(w^K_{Q^r})}^2,
\end{split}
\end{equation*}
where in the end we used Lemma \ref{lem:WavePacketLocalization}.

The remainder of the argument is close to the usual decoupling. We shall be brief.
Note that above a weight is permissible with arbitrarily high polynomial decay off $Q^r$. 
We use local $L^2$-orthogonality, recalled in Lemma \ref{lem:LocalL2Orthogonality}, to find
\begin{equation*}
\| P_{I_1} F \|^2_{L^2(w^K_{Q^r})} \lesssim \sum_{J_1 \in \mathbb{I}_{2m}(I_1)} \| P_{J_1} F \|^2_{L^2(w^K_{Q^r})}.
\end{equation*}
Finally, we can use the locally constant property of $P_{J_1} F$ on $Q^r$ to argue that
\begin{equation*}
\frac{1}{|Q^r|} \| P_{J_1} F \|^2_{L^2(w^K_{Q^r})} \lesssim \frac{1}{w(Q^r)} \|P_{J_1} F \|^2_{L^2(w_{Q^r_{\mu}})}.
\end{equation*}
Here we use that the initial weight on $Q^r$ with decay $K$ is more rapidly decaying than the fixed polynomial weight $w_{Q^r_{\mu}}$ - as we may.

\subsubsection{The proof of the fractal bilinear square function estimate}

We begin the proof in earnest.
\begin{proof}[Proof~of~Proposition~\ref{prop:BilinearSquareFunction}]
For $J_1 \in \mathbb{I}_m(I_1)$ let $\theta_{J_1}$ be a rectangle of size $2^{-m} \times 2^{-2m}$ such that $\text{supp}(\widehat{P_{J_1} F}) \subseteq \theta_{J_1}$. We plug in the wave packet decomposition
\begin{equation*}
P_{J_1} F(x) = \sum_{T_1 \in \T_{J_1} } \alpha_{T_1} \phi_{T_1}(x).
\end{equation*}
Above $\T_{J_1}$ denotes the tiling of $\R^2$, $T_1 \parallel \theta_{J_1}^*$. Next, we use Jensen's inequality like in \eqref{eq:JensenHeuristic} to find the pointwise inequality with $a_{T_1} = |\alpha_{T_1}|^2$:
\begin{equation*}
\sum_{J_1 \in \mathbb{I}_m(I_1)} |P_{J_1} F|^2(x) \lesssim 2^{-\frac{3m}{2}} \sum_{T_1 \in \T_1} a_{T_1} |\phi_{T_1}(x)|.
\end{equation*}
Here we take into consideration \emph{all} wave packets and not just the ones intersecting $Q^r$. By keeping track of the rapid decay of wave packets off $Q^r$, we can as well estimate the contributions of wave packets, which are separated from $Q^r$.

Next, we write out the fractal weight:
\begin{equation*}
\begin{split}
\frac{1}{w(Q^m)} \int \sum_{J_1 \in \mathbb{I}_m(I_1)} |P_{J_1} F|^2 w_{Q^m_{\mu}}(x) dx &\lesssim \sum_{T_1 \in \mathbb{T}_1} a_{T_1} 2^{-\frac{3m}{2}} \\
&\quad \quad  \sum_{k \in \Z^2} (1+|k|)^{-100} \dashint_{Q^m_{\mu} } |\phi_{T_1}(y+2^m k)| dy,
\end{split}
\end{equation*}
and similarly, for the sum over $J_2 \in \mathbb{I}_m(I_2)$, denoting coefficients with $b_{T_2}$.

Let $\sigma = 2^{-m}$. Then, as a consequence of the scattered modulation sum estimate in Lemma \ref{lem:ScatteredModulationSum}, we find for $x \in Q^m$:
\begin{equation*}
\begin{split}
&\quad \sum_{T_1 \in \mathbb{T}_1} a_{T_1} 2^{-\frac{3m}{2}} \sum_{k \in \Z^2} (1+|k|)^{-100} \dashint_{Q^m_\mu } |\phi_{T_1}(y+2^m k)| dy \\
&\lesssim \sum_{T_1 \in \mathbb{T}_1} a_{T_1} 2^{-\frac{3m}{2}} \sum_{k \in \Z^2} (1+|k|)^{-100} [\phi_{T_1}]_{\sigma}(x+2^m k).
\end{split}
\end{equation*}
Define functions $G_i$, $i=1,2$ via
\begin{equation*}
\begin{split}
G_1(x) = \sum_{T_1 \in \T_1} a_{T_1} 2^{-\frac{3m}{2}} \sum_{k \in \Z^2} (1+|k|)^{-100} [\phi_{T_1}]_{\sigma}(x+2^m k), \\
G_2(x) = \sum_{T_2 \in \T_2} b_{T_2} 2^{-\frac{3m}{2}} \sum_{k \in \Z^2} (1+|k|)^{-100} [\phi_{T_2}]_{\sigma}(x+2^m k).
\end{split}
\end{equation*}
Clearly, for $x \in Q^m$ we find
\begin{equation*}
\sum_{J_i \in \mathbb{I}_m(I_i)} \| P_{J_i} F \|^2_{L_{\sharp}^2(w_{Q^m_{\mu}})} \lesssim G_i(x).
\end{equation*}
For this reason we obtain by taking a weighted averaged:
\begin{equation*}
\dashint_{Q^m_{\mu}} \sum_{J_1 \in \mathbb{I}_m(I_1)} \|P_{J_1} F \|^2_{L^2_{\sharp}(w_{Q^m_{\mu}})} \sum_{J_2 \in \mathbb{I}_m(I_2)} \|P_{J_2} F \|^2_{L^2_{\sharp}(w_{Q^m_{\mu}})} \lesssim \dashint_{Q^m_{\mu}} G_1(x) G_2(x) dx.
\end{equation*}

Before we proceed, we remark on the properties of
\begin{equation*}
2^{-\frac{3m}{2}} \sum_{k \in \Z^2} (1+|k|)^{-100} [\phi_{T_1}]_{\sigma}(x+2^m k) = \tilde{\phi}_{T_1}(x),
\end{equation*}
which involves two averages on the scale $2^m$. The first via the scattered modulation sum can be taken to be of arbitrarily high power, so effectively it does not change the decay and locally constant properties of $\phi_{T_1}$. For $[\phi_{T_1}]_{\sigma}$ we use the same estimate \eqref{eq:PhiEstimate}. The second average is taken with respect to $w_{Q^m}$ decaying with high (but not arbitrarily high) polynomial degree - again on scale $2^m$.
We shall see that this averaging is permissible for our purpose because for a tube and $Q^r$ to be separated on a scale $A 2^r$, $A \geq 16$, this induces a decay factor of $A^{-100} 2^{-97m}$. 

In the following we let $r=2m$. The general case will follow from elementary means, which is like in the classical decoupling argument and detailed at the end of the proof of Proposition \ref{prop:PartialDecoupling}.
So, summing over $Q^m \in \mathcal{Q}^{m,r}$ we find by finite overlap
\begin{equation*}
\frac{1}{w(Q^r)} \sum_{Q^m \in \mathcal{Q}^{m,r}} w(Q^m) \dashint_{Q^m_{\mu}} G_1(x) G_2(x) dx \lesssim \dashint_{Q^r_{\mu}} G_1(x) G_2(x) dx.
\end{equation*}
In the heuristic from above, we restricted to $T_1 \in \mathbb{T}_1: T_1 \cap Q^r \neq \emptyset$. To make the localization rigorous, we carry out a dyadic decomposition.  We consider the tubes $T_i$ with $\text{dist}(T_i,Q^r) \lesssim 2^r$ and $\text{dist}(T_i,Q^r) \sim A 2^r$ for $A \geq 16$, $A \in 2^{\N}$.
Precisely, we break up
\begin{equation*}
\begin{split}
G_i(x) &=  \sum_{T_i} a_{T_i} \tilde{\phi}_{T_i}(x) \\
&=  \sum_{T_i \cap 16 Q^r \neq \emptyset } a_{T_i} \tilde{\phi}_{T_i}(x) + \sum_{\substack{A \geq 16, \\ A \in 2^{\N}}} \sum_{\text{dist}(T_i,Q^r) \sim A 2^r } a_{T_i} \tilde{\phi}_{T_i} \\
&=: G_{i0}(x) + \sum_{A \geq 16} G_{iA}(x).
\end{split}
\end{equation*}

The contribution of $G_{i0}$ comprised of the tubes $\mathbb{T}_{i0} = \{ T_i \in \mathbb{T}_i : \text{dist}(T_i,Q^r) \leq 16 \cdot 2^r \}$ can be estimated like in the heuristic.
We find for $T_1$ similar to the above for $x \in Q^r$:
\begin{equation*}
\tilde{\phi}_{T_1}(x) \lesssim 2^{-3m} \sum_{\tilde{T}_1 \parallel T_1} 1_{\tilde{T}_1}(x) (1+\text{dist}_{par}(\tilde{T}_1,T_1))^{-98}.
\end{equation*}
The average into the direction of the short side with fixed high polynomial degree decreases the arbitrarily high polynomial decay, but it is still sufficient for the purpose of the argument. Indeed, we can proceed like above via
\begin{equation*}
\begin{split}
\sum_{T_1 \in \mathbb{T}_{10}} a_{T_1} \tilde{\phi}_{T_1}(x) &\lesssim \sum_{\tilde{T}_1} 1_{\tilde{T}_1}(x) 2^{-3m} \tilde{a}_{\tilde{T}_1} =: g_1(x), \\ \tilde{a}_{\tilde{T}_1} &= \sum_{\substack{T_1 \parallel \tilde{T}_1, \\ T_1 \in \mathbb{T}_{10}}} a_{T_1} (1+\text{dist}_{par}(\tilde{T}_1,T_1))^{-50}
\end{split}
\end{equation*}
and corresponding definition of $g_2$. Then, we can use the fractal bilinear Kakeya estimate, the equivalence between the $\ell^1$ norm of $a_T$ and $\tilde{a}_T$ to find
\begin{equation}
\label{eq:RigidFractalKakeya}
\begin{split}
&\quad \dashint_{Q^r_{\mu}} g_1(x) g_2(x) dx \\ &\lesssim \kappa_{M^2} 2^{-4m} \sum_{T_1: T_1 \cap 16 Q^r \neq \emptyset} a_{T_1} 2^{-4m} \sum_{T_2: T_2 \cap 16 Q^r \neq \emptyset} b_{T_2} \\
&\lesssim \kappa_{M^2} 2^{-4m} \sum_{T_1: T_1 \cap 16 Q^r \neq \emptyset} |\alpha_{T_1}|^2 \cdot 2^{-4m} \sum_{T_2 : T_2 \cap 16 Q^r \neq \emptyset} |\beta_{T_2}|^2.
\end{split}
\end{equation}
Next, we can apply Lemma \ref{lem:WavePacketLocalization} to continue
\begin{equation*}
\eqref{eq:RigidFractalKakeya} \lesssim \kappa_{M^2} \prod_{i=1}^2 \sum_{J_i \in \mathbb{I}_m(I_i)} \| P_{J_i} F \|^2_{L_{\sharp}^2(w^K_{Q^r})}.
\end{equation*}
Here we can choose $K=300$, which leads to a weight which is decaying much faster off $Q^r$ than the corresponding $w_{Q^r_{\mu}}$. Then, we can apply $L^2$-local orthogonality Lemma \ref{lem:LocalL2Orthogonality} to find
\begin{equation*}
\sum_{J_i \in \mathbb{I}_m(I_i)} \| P_{J_i} F \|^2_{L_{\sharp}^2(w^K_{Q^r})} \lesssim \sum_{J_i \in \mathbb{I}_{2m}(I_i)} \| P_{J_i} F \|^2_{L^2_{\sharp}(w^K_{Q^r})}.
\end{equation*}
By the locally constant property, it can be expected that we can replace $w_{Q^r}^K$ with its fractal counterpart. We employ again a scattered modulation sum. Let $\tilde{\sigma} = 2^{-r}$. We have by Lemma \ref{lem:ScatteredModulationSum}
\begin{equation*}
\dashint_{Q^r + k 2^r} |P_{J_i} F|^2(x) dx \leq [P_{J_i} F]_{\tilde{\sigma}}^2(x_k) \leq \dashint_{Q^r_\mu + k 2^r} [[P_{J_i} F]]^2_{\tilde{\sigma}}(x) dx
\end{equation*}
with $x_k = c(Q^r) + k 2^r$, $c(Q^r)$ denoting the center of the cube, and by Jensen's inequality
\begin{equation*}
[[P_{J_i} F]]^2_{\tilde{\sigma}}(x) \lesssim \sum_{l_1,l_2 \in \Z^2} A_{l_1} A_{l_2} |\tau^{\tilde{\sigma}}_{l_1+l_2} P_{J_i} F|^2(x).
\end{equation*}
Hence, choosing the weight $A_l$ sufficiently large, e.g. of size $K=300$, we find
\begin{equation*}
\sum_{k \in \Z^2} (1+|k|)^{-300} \sum_{l_1,l_2 \in \Z^2} A_{l_1} A_{l_2} \dashint_{Q^r_{\mu}+ 2^r(k+l_1+l_2)} |P_{J_i} F|^2 \lesssim \| P_{J_i} F \|^2_{L^2_{\sharp}(w_{Q^r_{\mu}})}
\end{equation*}
because we can absorb the higher weights into the weight $w_{Q^r_{\mu}}$ .

\medskip

Next, we estimate the contribution of the remainder terms involving $G_{iC}$ for $C \geq 16$.
The isotropic averaging of $\phi_{T_1}$ for $\text{dist}(T_1,Q^r) \sim A 2^r$ yields for $x \in Q^r$ by splitting the isotropic average into $|k| \ll A 2^m$ and $|k| \gtrsim A 2^m$:
\begin{equation*}
\tilde{\phi}_{T_1}(x) \lesssim 2^{-3m} \sum_{\tilde{T}_1 \parallel T_1} 1_{\tilde{T}_1}(x) \big[ (1+\text{dist}_{par}(\tilde{T}_1,T_1))^{-K} + A^{-100} 2^{-97m } \big].
\end{equation*}
We split 
\begin{equation*}
2^{3m} \tilde{\phi}_{T_1}(x) \lesssim A^{-K+3} \sum_{\tilde{T}_1 \parallel T_1} 1_{\tilde{T}_1}(x) (1+\text{dist}_{par}(\tilde{T}_1,T_1))^{-3} + A^{-100} 2^{-97m} \sum_{\tilde{T}_1 \parallel T_1} 1_{\tilde{T}_1}(x)
\end{equation*}
and correspondingly, $G_{1A} \lesssim G_{1A}^1 + G_{1A}^2$. Let
\begin{equation*}
\mathcal{E}_{A,r} = \{ x \in \R^d: \text{dist}(x,Q^r) \sim A 2^r \} \text{ and } w_{\mathcal{E}_{A,r}}^K = (1+2^{-r} \text{dist}(x,\mathcal{E}_{A,r}))^{-K}.
\end{equation*}

The first term is clearly permissible as this gives a weighted estimate for the wave packet coefficients and by Lemma \ref{lem:WavePacketLocalization} we find
\begin{equation*}
\begin{split}
&\quad \dashint_{Q^r_{\mu}} G_{1A}^1 G_{2B}^1 dx \\
 &\lesssim \kappa_{M^2} (2^{-4m})^2 A^{-K+3} \sum_{T_1: \text{dist}(T_1,Q^r) \sim A 2^r} |\alpha_{T_1}|^2 \cdot B^{-K+3} \sum_{T_2: \text{dist}(T_2,Q^r) \sim B 2^r} |\beta_{T_2}|^2 \\
&\lesssim \kappa_{M^2} 2^{-4m} A^{-K+3} \sum_{J_1 \in \mathbb{I}_m(I_1)} \| P_{J_1} F \|^2_{L^2(w^{K}_{\mathcal{E}_{A,r}})} 2^{-4m} B^{-K+3} \sum_{J_2 \in \mathbb{I}_m(I_2)} \| P_{J_2} F \|^2_{L^2(w^{K}_{\mathcal{E}_{B,r}})}.
\end{split}
\end{equation*}
Consequently,
\begin{equation*}
\sum_{A,B \geq 16} \dashint_{Q^r_{\mu}} G_{1A}^1 G_{2B}^1 dx \lesssim \kappa_{M^2} \prod_{i=1}^2 \sum_{J_i \in \mathbb{I}_m(I_i)} \| P_{J_i} F \|^2_{L^2_{\sharp}(w^{K-3}_{Q^r})},
\end{equation*}
which suffices.
The estimate for the second term can be carried out more directly and crossover terms containing products of $G_{i0}$ and $G_{iA}$ are estimated similarly. At this point, we can follow along the above lines, invoking local $L^2$-orthogonality by Lemma \ref{lem:LocalL2Orthogonality}, and then, recasting the standard weight as fractal weight using a scattered modulation sum.
\end{proof}

We record consequences of the above. The first is a straight-forward iteration:
\begin{corollary}
\label{cor:IteratedSquareFunctionIteration}
The following holds for $m,k,r \geq 0$, $2^k m \leq r/2$, $\text{supp}(\hat{F}) \subseteq \mathcal{N}_{2^{-r}}(\mathbb{P}^1)$:
\begin{equation*}
A_4(m,Q^r_{\mu},F)^4 \leq C^k \kappa_{2^{2m}} \kappa_{2^{4m}} \ldots \kappa_{2^{2^k m}} A_4(2^k m, Q^r_{\mu}, F)^4.
\end{equation*}
\end{corollary}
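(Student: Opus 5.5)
The plan is a $k$-fold application of Proposition \ref{prop:BilinearSquareFunction}, passing through the double-dyadic chain of scales $m, 2m, 4m, \ldots, 2^k m$. For $j = 0, 1, \ldots, k-1$ put $m_j = 2^j m$, so that $m_{j+1} = 2m_j$. The hypothesis $2^k m \leq r/2$ gives $2 m_j \leq 2^k m \leq r/2 \leq r$ for every $j \leq k-1$. Hence the requirement $r \geq 2m_j$ of the fractal Kakeya leap holds at every step. The Fourier support assumption $\text{supp}(\hat F) \subseteq \mathcal{N}_{2^{-r}}(\mathbb{P}^1)$ is the standing assumption $\delta \leq 2^{-r}$ under which $A_4$ was defined. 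Applying Proposition \ref{prop:BilinearSquareFunction} with $M = 2^{m_j}$ therefore yields
\begin{equation*}
A_4^4(m_j,Q^r_{\mu},F) \leq C \kappa_{2^{2 m_j}} A_4^4(m_{j+1},Q^r_{\mu},F),
\end{equation*}
where $C$ is the implicit constant of the proposition.

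Chaining these $k$ inequalities from $j=0$ up to $j=k-1$ gives
\begin{equation*}
A_4^4(m,Q^r_{\mu},F) \leq C^k \prod_{j=0}^{k-1} \kappa_{2^{2^{j+1} m}} \, A_4^4(2^k m,Q^r_{\mu},F).
\end{equation*}
The factors are $\kappa_{2^{2m}}, \kappa_{2^{4m}}, \ldots, \kappa_{2^{2^k m}}$, which is exactly the product in the statement. Here $\kappa$ is understood, as in the proposition, relative to the fixed cube $Q^r$ and the unrescaled measure $\mu$.

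The only step needing care is uniformity of the constant $C$. One must check that $C$ in Proposition \ref{prop:BilinearSquareFunction} depends only on the doubling constant $D$ and on the fixed decay exponents ($100$, $300$) in the weights $w_{Q^r_\mu}$ and in the scattered modulation sums, and not on $m$ or on the level $j$. Otherwise the chaining would not produce a clean $C^k$. This is where I expect the main obstacle, because the weights must reproduce themselves from one step to the next.

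In the proof of the proposition, the wave packet localization (Lemma \ref{lem:WavePacketLocalization}) and local orthogonality (Lemma \ref{lem:LocalL2Orthogonality}) are used with a high decay power $K$. The result is then recast via a scattered modulation sum into the same fixed-degree fractal weight $w_{Q^{2m}_\mu}$ that appears in the definition of $A_4$ at the next scale. Because of this, the output of one leap is precisely the input of the next, with no loss of decay. The finite-overlap constants of the families $\mathcal{Q}^{m_j,r}$ also enter only once per leap. I would verify this compatibility explicitly for the reduction from general $r \geq 2m_j$ to $r = 2m_j$, so that each step costs a single absolute factor $C$.
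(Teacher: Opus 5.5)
Your proposal is correct and is essentially the paper's argument: the corollary is a straightforward $k$-fold iteration of the fractal Kakeya leap (Proposition \ref{prop:BilinearSquareFunction}) along the scales $m, 2m, \ldots, 2^{k-1}m$, where $2^k m \leq r/2$ guarantees $r \geq 2^{j+1}m$ at every step and the factors come out as $\kappa_{2^{2m}}, \ldots, \kappa_{2^{2^k m}}$. The uniformity issue you flag needs no further check. The right-hand side $A_4(2m,Q^r_{\mu},F)$ of the proposition is, by definition, the same quantity (with the same fixed weight $w_{Q^{2m}_{\mu}}$) that enters the next leap, and the implicit constant in the proposition is absolute, so chaining gives exactly $C^k$.
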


This yields the following bilinear square function estimate:
\begin{proposition}
\label{prop:BilinearSquareFunctionEstimate}
Let $I_1,I_2 \subseteq [-1,1]$ be two intervals with $|I_i| \geq 1/8$, which are separated $\text{dist}(I_1,I_2) \geq 1/8$ and $\text{supp}(\hat{F}) \subseteq \mathcal{N}_{2^{-r}}(\mathbb{P}^1)$ for $r \geq 10$. It holds
\begin{equation*}
\int_{Q^r_{\mu}} |P_{I_1} F|^2 |P_{I_2} F|^2 dx \leq r^c \big( \prod_{M \in [2^{r/2}]_{d}} \kappa_M \big) w(Q^r) A_4^4(r/2,Q^r_{\mu},F)
\end{equation*}
with $c$ uniform in $Q^r_{\mu}$, $F$, $I_j$, and $r$.
\end{proposition}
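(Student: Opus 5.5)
The plan is to run Corollary \ref{cor:IteratedSquareFunctionIteration} from the smallest double-dyadic scale up to $r/2$, and to supply separately a starting estimate at the bottom scale. Let $s$ be maximal with $m_0 := r/2^{s+1} \geq 1$, so that the scales $m_0, 2m_0, \ldots, 2^s m_0 = r/2$ are exactly the double-dyadic scales. The proof then has three steps.

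\emph{Step 1: initial estimate at scale $m_0 \sim 1$.} We bound the left-hand side by the $A_4$ quantity at scale $m_0$. First cover $Q^r$ by the family $\mathcal{Q}^{m_0,r}$; its overlap is at most $C\log(r/m_0)\lesssim \log r$. This gives
\begin{equation*}
\int_{Q^r_\mu}|P_{I_1}F|^2|P_{I_2}F|^2 \leq \sum_{Q^{m_0}\in\mathcal{Q}^{m_0,r}} \int_{Q^{m_0}_\mu}|P_{I_1}F|^2|P_{I_2}F|^2 .
\end{equation*}
On each $Q^{m_0}$, write $P_{I_i}F=\sum_{J_i\in\mathbb{I}_{m_0}(I_i)}P_{J_i}F$. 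Since $2^{m_0}\lesssim 1$, there are $O(1)$ intervals $J_i$, so Cauchy--Schwarz costs only a constant. Each $P_{J_i}F$ is locally constant on cubes of size $2^{m_0}$ in the sense of Lemma \ref{lem:ScatteredModulationSum}, with $\sigma=2^{-m_0}$. Hence, exactly as at the end of the proof of Proposition \ref{prop:BilinearSquareFunction}, we get
\begin{equation*}
\dashint_{Q^{m_0}_\mu}|P_{J_1}F|^2|P_{J_2}F|^2\lesssim \|P_{J_1}F\|^2_{L^2_\sharp(w_{Q^{m_0}_\mu})}\|P_{J_2}F\|^2_{L^2_\sharp(w_{Q^{m_0}_\mu})}.
\end{equation*}
The tails of the scattered modulation sums are absorbed into the polynomial weight $w_{Q^{m_0}_\mu}$. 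Multiplying by $w(Q^{m_0})$ and summing, the right-hand side becomes $w(Q^r)\,A_4^4(m_0,Q^r_\mu,F)$ up to a factor $(\log r)^{c}$.

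\emph{Step 2: iteration.} Apply Corollary \ref{cor:IteratedSquareFunctionIteration} with $m=m_0$ and $k=s$. This gives
\begin{equation*}
A_4^4(m_0,Q^r_\mu,F)\leq C^{s}\,\kappa_{2^{2m_0}}\kappa_{2^{4m_0}}\cdots\kappa_{2^{2^s m_0}}\,A_4^4(r/2,Q^r_\mu,F).
\end{equation*}
The scales $2^{2^j m_0}$, $j=1,\ldots,s$, are $2^{r/2^{s-j+1}}$, which run through $[2^{r/2}]_d$ up to at most one endpoint scale of size $O(1)$. An omitted scale contributes a factor bounded by $\kappa_M\leq M^{1/2}$ with $M\lesssim 1$; this holds by monotonicity of $\mu$ under inclusion, using \eqref{eq:MildContinuity}. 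Conversely, any extra factor $\kappa_M$ is at least $\gtrsim 1$, again by \eqref{eq:MildContinuity} after splitting $J_M$ into $M^{1/2}$ subintervals. So the product agrees with $\prod_{M\in[2^{r/2}]_d}\kappa_M$ up to constants. Since $s\sim\log r$, the factor $C^s$ is $r^{c}$, which gives the claim.

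\emph{Main obstacle.} The delicate point is Step 1 together with the bookkeeping for the corollary's hypotheses. The corollary requires $2^k m\leq r/2$ and applies Proposition \ref{prop:BilinearSquareFunction} only for $r\geq 2m$. One must also check that the overlap constants of the nested cube families $\mathcal{Q}^{m,r}$ do not accumulate into more than polylogarithmic losses. I would first verify that the weights $w_{Q^m_\mu}$ at consecutive scales are compatible under the summation $\sum_{Q^m} w(Q^m)(\cdot)$, using \eqref{eq:AlmostProductWeights} and the doubling condition. Only then would I collect all losses into $r^c$.
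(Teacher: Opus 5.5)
Your proposal is correct and follows essentially the paper's route: at the bottom scale, where there are only $O(1)$ intervals, you use Cauchy--Schwarz and the scattered modulation sum to bound $\int_{Q^r_\mu}|P_{I_1}F|^2|P_{I_2}F|^2$ by $w(Q^r)A_4^4$, then climb to $r/2$ with Corollary \ref{cor:IteratedSquareFunctionIteration} and absorb $C^s$ into $r^c$. The differences are cosmetic: the paper starts at $r/2^s\in[2,4)$, the smallest scale at which its nested families $\mathcal{Q}^{m,r}$ are defined, rather than at your $m_0\in[1,2)$ (which needs one extra subdivision), and it leaves implicit your check that the omitted $O(1)$ scales are harmless because $1\lesssim\kappa_M\le M^{1/2}$, where the lower bound needs only subadditivity of $\mu$, not \eqref{eq:MildContinuity}.
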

\begin{proof}
We divide the left-hand side by $w(Q^r)$ upon which we shall see it becomes comparable to $A_4(r/2^s,Q^r_{\mu},F)^4$ with $s$ chosen maximally such that $r/2^s \in [2,4)$. Indeed, we can break $I_i$ into finitely overlapping $I \in \mathbb{I}_{r/2^s}$ to find
\begin{equation*}
\begin{split}
&\quad \dashint_{Q^r_{\mu}} \sum_{J_1 \in \mathbb{I}_{r/2^s}(I_1)} |P_{J_1} F|^2 \sum_{J_2 \in \mathbb{I}_{r/2^s}(I_2)} |P_{J_2} F|^2 \\
&\leq \frac{1}{w(Q^r)} \sum_{Q^m \in \mathcal{Q}^{m,r}} w(Q^m) \dashint_{Q^m_{\mu}} \sum_{J_1 \in \mathbb{I}_{r/2^s}(I_1)} |P_{J_1} F|^2 \sum_{J_2 \in \mathbb{I}_{r/2^s}(I_2)} |P_{J_2} F|^2.
\end{split}
\end{equation*}

Next we use the locally constant property from Lemma \ref{lem:ScatteredModulationSum} to find for $\sigma =2^{-\frac{r}{2^s}}$
\begin{equation*}
\begin{split}
&\quad \dashint_{Q^m_{\mu}} \sum_{J_1 \in \mathbb{I}_{r/2^s}(I_1)} |P_{J_1} F|^2 \sum_{J_2 \in \mathbb{I}_{r/2^s}(I_2)} |P_{J_2} F|^2 \\
&\lesssim \dashint_{Q^m_{\mu}} \sum_{J_1 \in \mathbb{I}_{r/2^s}(I_1)} [[P_{J_1} F]]^2_{\sigma}(x) dx \dashint_{Q^m_{\mu}} \sum_{J_2 \in \mathbb{I}_{r/2^s}(I_2)} [[P_{J_2} F]]^2_{\sigma}(x) dx \\
&\lesssim  \sum_{J_1 \in \mathbb{I}_{r/2^s}(I_1)} \| P_{J_1} F \|^2_{L^2_{\sharp}(w_{Q^m_{\mu}})} \sum_{J_2 \in \mathbb{I}_{r/2^s}(I_2)} \| P_{J_2} F \|^2_{L^2_{\sharp}(w_{Q^m_{\mu}})}.
\end{split}
\end{equation*}
The argument to go from the iterated modulation sum to the weight $w_{Q^m_{\mu}}$ is like above and not detailed. Hence, by Corollary \ref{cor:IteratedSquareFunctionIteration} we find
\begin{equation}
\label{eq:IterationConsequence}
\dashint_{Q^r_{\mu}} |P_{I_1} F|^2 |P_{I_2} F|^2 dx \lesssim r^c \big( \prod_{M \in [2^{r/2}]_{d}} \kappa_M \big) A_4(r/2,Q^r_{\mu},F)^4.
\qedhere
\end{equation}
\end{proof}

\subsection{The linear-to-bilinear reduction}

We are ready for the proof of Theorem \ref{thm:FractalSquareFunction}, which will follow from the above bilinear estimate and a standard linear to bilinear reduction.

\begin{proof}[Proof~of~Theorem~\ref{thm:FractalSquareFunction}]
We reduce the linear estimate to the bilinear estimate by Whitney decomposition and almost orthogonality, cf. \cite{TaoVargasVega1998}. We write
\begin{equation*}
F^2 = \sum_{R^{-\frac{1}{2}} \leq \nu \leq 1} \sum_{\theta_1 \sim_{\nu} \theta_2} P_{\theta_1} F P_{\theta_2} F,
\end{equation*}
decomposing the Fourier support into intervals $\theta_1 \sim_{\nu} \theta_2$ of length $\nu$ at distance $\nu \in 2^{\Z}$; the multiplier of $P_{\theta}$ given by $\chi_{\theta} \times \chi_{\R}$. For $\nu \in [R^{-\frac{1}{2}}, 2 R^{-\frac{1}{2}})$ the intervals are no longer assumed to be separated, but contained in a joint interval of $4 \nu$.

For fixed $\nu$ we can use fixed-time almost orthogonality as follows. Then it is straight-forward that the Fourier transforms of $P_{\theta_1} F(t) P_{\theta_2} F(t)$ for $\theta_1 \sim_{\nu} \theta_2$ are supported in intervals of length $4 \nu$, essentially in a $\nu$-neighborhood of $\theta_1 + \theta_2$. Now, for two pairs $\theta_1 \sim_{\nu} \theta_2$ and $\theta_3 \sim_{\nu} \theta_4$ with $\text{dist}(\theta_1,\theta_3) \geq 10 \nu$, clearly $\text{dist}(\theta_1 + \theta_2, \theta_3 + \theta_4) \geq 5 \nu$. For this reason, by integrating over $\bar{I}^R_{\mu}$,
\begin{equation*}
\big\| \sum_{\theta_1 \sim_{\nu} \theta_2} P_{\theta_1} F P_{\theta_2} F \|^2_{L^2_{t,x}(\bar{I}^R_{\mu} \times \R)}  \lesssim \sum_{\theta_1 \sim_{\nu} \theta_2} \| P_{\theta_1} F P_{\theta_2} F \|^2_{L^2_{t,x}(\bar{I}^R_{\mu} \times \R)}.
\end{equation*}
For $\nu \in [1/32,1]$ we decompose $\R^2$ into cubes $Q^r$ of size $2^r$ we can apply Proposition \ref{prop:BilinearSquareFunctionEstimate} to find for $\theta_1 \sim_{\nu} \theta_2$
\begin{equation*}
\| P_{\theta_1} F P_{\theta_2} F \|^2_{L^2(Q^r_{\mu})} \lesssim r^c \big( \prod_{M \in [2^{r/2}]_d} \kappa_M \big) \int \prod_{i=1}^2 \big( \sum_{J_i \in \mathbb{I}_{r/2}(\theta_i)} |P_{J_i} F|^2 \big) \tilde{w}_{Q_{\mu}^r}.
\end{equation*}
Consequently, summing over $Q^r$ we find
\begin{equation*}
\| P_{\theta_1} F P_{\theta_2} F \|^2_{L^2(\bar{I}^R_{\mu} \times \R)} \lesssim r^c \big( \prod_{M \in [2^{r/2}]_d} \kappa_M \big) \int \prod_{i=1}^2 \big( \sum_{J_i \in \mathbb{I}_{r/2}(\theta_i)} |P_{J_i} F|^2 \big) \tilde{w}_{\mu}.
\end{equation*}

For $\nu \ll 1$ we apply parabolic rescaling $t \to \nu^2 t$, $x \to \nu (x + 2 \xi_0 \nu t)$:
\begin{equation*}
\big\| P_{\theta_1} F P_{\theta_2} F \|^2_{L^2(\bar{I}^R_{\mu})} = \big\| P_{\tilde{\theta}_1} \tilde{F} P_{\tilde{\theta}_2} \tilde{F} \big\|^2_{L^2_{t,x}(\bar{I}^{R \nu^2}_{\nu^2 \mu} \times \R )}.
\end{equation*}
After rescaling, the time domain is given by $\nu^2 \bar{I}^R_{\mu} = \bar{I}^{\nu^2 R}_{\nu^2 \mu}$. The rescaled intervals $\tilde{\theta}_i$ are approximately of unit size and separated on the unit scale. The Jacobian was absorbed into the rescaled function $\tilde{F}$, as we shall soon reverse the scaling.

Next, we partition the domain into cubes of size $\nu^2 2^r$:
\begin{equation*}
\big\| P_{\tilde{\theta}_1} \tilde{F} P_{\tilde{\theta}_2} \tilde{F} \big\|^2_{L^2_{t,x}(\bar{I}^{R \nu^2}_{\nu^2 \mu} \times \R )} = \sum_{Q^{r+ \log(\nu^2)}_{\nu^2 \mu}} \big\| P_{\tilde{\theta}_1} \tilde{F} P_{\tilde{\theta}_2} \tilde{F} \big\|^2_{L^2_{t,x}(Q^{r+ \log(\nu^2)}_{\nu^2 \mu})}.
\end{equation*}
For the further decomposed expression we can apply Proposition \ref{prop:BilinearSquareFunctionEstimate}:
\begin{equation*}
\begin{split}
\| P_{\tilde{\theta}_1} \tilde{F} P_{\tilde{\theta}_2} \tilde{F} \|^2_{L^2(Q^{r+ \log(\nu^2)}_{\mu})} &\lesssim (r \log(\nu^2))^c \big( \prod_{M \in [2^{r/2} \nu]_d} \kappa^{\nu}_M \big) \\
&\quad \times \int \prod_{i=1}^2 \big( \sum_{J_i \in \mathbb{I}_{r+\log_2(\nu^2))/2}(\tilde{\theta}_i)} |P_{J_i} \tilde{F} |^2 \big) \tilde{w}_{Q^{r + \log(\nu^2)}_{\nu^2 \mu}}.
\end{split}
\end{equation*}
Summing over the fractal cubes of size $2^r \nu^2$, we find
\begin{equation*}
\begin{split}
&\quad \sum_{Q^{r + \log(\nu^2)}_{\mu}} \| P_{\tilde{\theta}_1} \tilde{F} P_{\tilde{\theta}_2} \tilde{F} \|^2_{L^2(Q^{r + \log(\nu^2)}_{\mu})} \\
&\lesssim \log(R)^c \big( \prod_{M \in [2^{r/2} \nu]_d} \kappa^{\nu}_M \big) \int \prod_{i=1}^2 \big( \sum_{J_i \in \mathbb{I}_{(r+\log_2(\nu^2))/2}(\tilde{\theta}_i)} |P_{J_i} \tilde{F} |^2 \big) \tilde{w}_{\nu^2 \mu}^{r,\nu}.
\end{split}
\end{equation*}
$\tilde{w}_{\nu^2 \mu}^{r,\nu}$ is a convex combination of $\nu^2 \bar{I}^{R}_{\mu} \times \R$, consequently, upon reversing the scaling, we find a convex combination of $\bar{I}^R_{\mu} \times \R$ denoted by $\tilde{w}^{r,\nu}_{\mu}$.
Second, the intervals $I \in \mathbb{I}_{(r+\log(\nu^2))/2}$ are mapped to intervals of length $2^{-r/2}$, which are contained in an interval $\tilde{\theta} \supseteq \theta_1 \cup \theta_2$ of length $4 \nu$. We find
\begin{equation*}
\| P_{\theta_1} F P_{\theta_2} F \|^2_{L^2(w)} \lesssim \log(R)^c \bar{\kappa}^{\nu} \int \prod_{i=1}^2 \big( \sum_{J_i \in \mathbb{I}_{r/2}(\theta_i)} |P_{J_i} F|^2 \big) \tilde{w}^{r,\nu}_{\mu} 
\end{equation*}
with
\begin{equation*}
\bar{\kappa}^{\nu} = \prod_{M \in [2^{\frac{r}{2}} \nu ]_d} \kappa_M^{\nu}.
\end{equation*}
Consequently,
\begin{equation*}
\sum_{\theta_1 \sim_{\nu} \theta_2} \| P_{\theta_1} F P_{\theta_2} F \|^2_{L^2(w)} \lesssim \log(R)^c \bar{\kappa}^{\nu} \int \big( \sum_{J \in \mathbb{I}_{r/2}} |P_J F|^2 (\tilde{w}^{r,\nu,J}_{\mu})^{\frac{1}{2}} \big)^2
\end{equation*}
It remains to carry out the dyadic sum over $\nu$ via H\"older's inequality to find
\begin{equation*}
\| F \|^4_{L^4(w )} \lesssim \log(R)^c \sum_{R^{-1/2} \leq \nu \leq 1} \bar{\kappa}^{\nu} \int \big( \sum_{J \in \mathbb{I}_{r/2}} |P_J F|^2 (\tilde{w}_{\mu}^{r,\nu,J})^{\frac{1}{2}} \big)^{2}  dx . \qedhere
\end{equation*}
\end{proof}

\section{The fractal decoupling iteration}
\label{section:FractalDecoupling}
The outset is similar to the above: Let $r \geq 1$, $R := 2^r$. We consider a measurable set $\bar{I}^{r}_{\mu} \subseteq \R$ which satisfies \eqref{eq:MildContinuity} and \eqref{eq:LocalAhlfors}. Notation $Q^r$, $Q^r_{\mu}$, $D_t$ and $w_{Q^r_{\mu}}$ is like above. We aim to run the decoupling iteration from \cite{BourgainDemeter2015} whose bedrock is a partial decoupling for $A_6$. We have to pay attention to the fractal weights, which behave a little different from the standard rapidly decaying weight, and indeed, have to carry out the induction allowing for slightly more general weights. Iterating and bootstrapping the decoupling constant follows essentially the Bourgain--Demeter \cite{BourgainDemeter2015} argument. The presentation of the argument below follows Demeter's monograph \cite{Demeter2020}. We carry out the following steps, which are described in the next subsection:
\begin{enumerate}
\item Linear-to-bilinear reduction,
\item Jump-starting the decoupling iteration for the bilinear expression,
\item The partial decoupling via the fractal Kakeya estimate,
\item Iterating the partial decoupling,
\item Bootstrapping the linear decoupling constant through the decoupling iteration.
\end{enumerate}

\subsection{An overview of the argument}

It would be desirable to obtain a global decoupling estimate for $w$ like in Assumption \ref{assump:weights} and $\text{supp}(\hat{F}) \subseteq \mathcal{N}_{2^{-r}}(\mathbb{P}^1)$:
\begin{equation*}
\| F \|_{L^p(w)} \leq \text{Dec}(p,r,\alpha) \big( \sum_{I \in \mathbb{I}_{r/2}} \| P_I F \|^2_{L^p(w)} \big)^{\frac{1}{2}}.
\end{equation*}

The issue is that the decoupling argument requires to consider weighted translations of $w$. For this reason, we consider more general decoupling inequalities
\begin{equation*}
\| F \|_{L^p(w)} \leq C \big( \sum_{I \in \mathbb{I}_{r/2}} \| P_I F \|^2_{L^p(w^{I,F})} \big)^{\frac{1}{2}}
\end{equation*}
for admissible weights $w^{I,F} : \R^d \to \R_{\geq 0}$; see below. 
\begin{assumption}
Let $\alpha \in (0,1]$, $r \geq 1$. In the following we consider $\alpha$-admissible weights $w^r$ at scale $R=2^r$ satisfying Assumption \ref{assump:weights} with Borel measure $\mu$. We require that for $\nu \in [2^{-\frac{r}{2}},1]$ and $1 \leq M \leq \nu 2^{\frac{r}{2}}$, and intervals $J_{\nu^{-2} M^{\frac{1}{2}}} \subseteq J_{\nu^{-2} M}$ of length $\nu^{-2} M^{\frac{1}{2}}$, $\nu^{-2} M$, respectively, it holds
\begin{equation}
\label{eq:QuantifiedLocalAhlfors}
\sup_{J_{\nu^{-2} M^{\frac{1}{2}}} \subseteq J_{\nu^{-2} M}} \frac{M^{\frac{1}{2}} \mu(J_{\nu^{-2} M^{\frac{1}{2}}} )}{\mu(J_{\nu^{-2} M} )} \leq D_2 M^{\frac{1-\alpha}{2}}.
\end{equation}
\end{assumption}

Again we consider weights $w'$ which are convex combinations of translates of $w^r$, that is
\begin{equation*}
w' = \sum_{k \in \N} a_k w^r(\cdot + v_k)
\end{equation*}
with $a_k > 0$ for any $k \in \N$, $\sum_{k \geq 1} a_k =1 $, and $(v_k) \subseteq \R^2$.
The constants $D,D_1,D_2$ from \eqref{eq:MildContinuity}, \eqref{eq:AlmostProductWeights}, \eqref{eq:QuantifiedLocalAhlfors} are fixed. The induction is only carried out over weights with doubling and regularity constants bounded by $CD$ \eqref{eq:MildContinuity}, $CD_1$ \eqref{eq:AlmostProductWeights}, $D_2$ \eqref{eq:QuantifiedLocalAhlfors}. By choosing the $\alpha$-admissible weight $w^r$ at scale $2^r$, in the induction-on-scales only rescalings (and Galilean shearings) of $w^r$ arise.
In the motivating example $w^r(x_1,x_2) = \chi_{\bar{I}^R_{\mu}}(x_2)$ these are simple rescalings $w^{r_1}(x) = w^r(2^{r-r_1} x)$.

To justify the above, we clarify the stability of weights under parabolic rescaling. To this end, recall that parabolic rescaling is composed of the shearing $A(x,t) = (x+2 \xi_0 t, t)$ and parabolic dilation $D_{\nu}(x,t) = (\nu x, \nu^2 t)$. Let $T= D_{\nu} \circ A$ the transformation in physical space. We have, as we are considering unit frequencies, $|\xi_0| \leq 1$ and by linearity it is enough to check one rescaling. 
\begin{lemma}[Stability~lemma]
Let $\nu \in [2^{-\frac{r}{2}},1]$, and let $w^r$ be a weight satisfying Assumption \ref{assump:weights} with Borel measure $\mu$ with doubling constant $D$ \eqref{eq:MildContinuity}, and regularity constants $D_1,D_2$ \eqref{eq:AlmostProductWeights}, \eqref{eq:LocalAhlfors}. Then, $w^r_{\nu} = \nu w^r \circ T^{-1}$ satisfies Assumption \ref{assump:weights} at scale $\nu^2 2^r$ with Borel measure $\mu_{\nu^2} = \mu(\nu^{-2} \cdot)$ and doubling constant $D$, regularity constants $C D_1$ \eqref{eq:AlmostProductWeights}, $D_2$ \eqref{eq:LocalAhlfors} with $C \geq 1$ a uniform constant.
\end{lemma}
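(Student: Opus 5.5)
The argument is a direct change of variables. The three properties \eqref{eq:MildContinuity}, \eqref{eq:AlmostProductWeights} and \eqref{eq:LocalAhlfors} are checked one at a time for $w^r_\nu = \nu\, w^r\circ T^{-1}$ with $T = D_\nu\circ A$. I keep track of how $T^{-1}$ maps cubes. Note that $T^{-1}(x,t) = (\nu^{-1}x - 2\xi_0\nu^{-2}t,\ \nu^{-2}t)$ has Jacobian $\nu^{-3}$ and maps the time coordinate by pure dilation $t\mapsto \nu^{-2}t$.

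\emph{Doubling and local Ahlfors conditions.} Both conditions only involve the measure. For $\mu_{\nu^2} = \mu(\nu^{-2}\cdot)$ the image of an interval $J_M$ is the interval $\nu^{-2}J_M$, and a mild dilation $J_{2M}\supseteq J_M$ is mapped to a mild dilation $\nu^{-2}J_{2M}\supseteq\nu^{-2}J_M$. Hence \eqref{eq:MildContinuity} holds for $\mu_{\nu^2}$ with the same constant $D$. For \eqref{eq:LocalAhlfors}, the ratio $M^{1/2}\mu_{\nu^2}(J_{M^{1/2}})/\mu_{\nu^2}(J_M)$ equals $M^{1/2}\mu(J_{\nu^{-2}M^{1/2}})/\mu(J_{\nu^{-2}M})$. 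I then plan to compare the rescaled quantities at scale $\nu'$ for the weight at scale $\nu^2 2^r$ with those at scale $\nu\nu'$ for the original weight, using \eqref{eq:QuantifiedLocalAhlfors}. This should be an identity with the same $D_2$, provided the admissible ranges $M\le \nu'(\nu^22^r)^{1/2} = \nu\nu'2^{r/2}$ match, which they do.

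\emph{The almost product condition.} This is the main step. Take a cube $Q_M$ of side $M$ in the new coordinates. Then
\[
w^r_\nu(Q_M) = \nu\int_{Q_M} w^r(T^{-1}y)\,dy = \nu\cdot\nu^{3}\, w^r(T^{-1}Q_M),
\]
and $T^{-1}Q_M$ is a parallelogram. Its time projection is $\nu^{-2}J_M$. Its horizontal cross-sections have length $\nu^{-1}M$, sheared by at most $2\nu^{-2}M$ because $|\xi_0|\le1$. The plan is to cover this parallelogram from inside and outside by $\sim \nu^{-1}\cdot(\text{number of horizontal slabs})$ cubes of side $\nu^{-2}M$, or more precisely by rows of cubes of side $\nu^{-1}M$ stacked over the time interval $\nu^{-2}J_M$. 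On each row I apply \eqref{eq:AlmostProductWeights} for $w^r$ to the cubes of side $\nu^{-1}M$. This gives contributions $\nu^{-1}M\,\mu(J_{\nu^{-1}M})$ summed over subintervals partitioning $\nu^{-2}J_M$, that is $\approx \nu^{-1}M\,\mu(\nu^{-2}J_M)$.

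The obstacle is that in the $x$-direction the parallelogram has width $\nu^{-1}M$ while the time intervals have length $\nu^{-2}M\ge \nu^{-1}M$. The pieces are therefore thin slabs of size $\nu^{-1}M\times\nu^{-1}M$ that are shifted by the shear, and I have to make sure that \eqref{eq:AlmostProductWeights} applied cube by cube sums up correctly. Since the lower and upper bounds in \eqref{eq:AlmostProductWeights} hold for every cube regardless of its horizontal position, summing over the disjoint stacked cubes gives
\[
w^r(T^{-1}Q_M)\sim_{D_1}\nu^{-1}M\sum_j\mu(J^{(j)}_{\nu^{-1}M}) = \nu^{-1}M\,\mu(\nu^{-2}J_M).
\]
Boundary cubes that are only partially contained in the parallelogram are handled by inner and outer approximations: the outer cover is enlarged by a factor $2$ in width, and the inner one is shrunk by a factor $2$, which requires $M\gtrsim$ a constant so that the cubes fit. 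Passing between these approximations and $\mu(\nu^{-2}J_M)$ uses \eqref{eq:MildContinuity}, and this is where the constant $C$ in $CD_1$ comes from. Multiplying by $\nu^4$ gives $w^r_\nu(Q_M)\sim \nu^3 M\mu(\nu^{-2}J_M)$. So the normalization factor should have been $\nu^{-3}$ rather than $\nu$, or else the resulting power of $\nu$ is absorbed into a rescaling of $\mu_{\nu^2}$. Either way this is harmless, because every estimate is homogeneous in $\mu$; I will fix the normalization so that $M\mu_{\nu^2}(J_M)$ appears with constant $CD_1$.
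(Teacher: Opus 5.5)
Your proposal is correct and is essentially the paper's argument. The paper handles the shear and the dilation as two separate steps. For the shear it takes $\nu=1$, places the sheared cube in a box of width $3M$, and fits a stack of cubes of side $M/4$ inside it. For the dilation it stacks cubes of side $\nu^{-1}M$ over $I_{\nu^{-2}M}$ and sums \eqref{eq:AlmostProductWeights}. You do both in one pass, and you additionally verify \eqref{eq:LocalAhlfors} through the index shift $\nu'\mapsto\nu\nu'$, which the paper leaves implicit.

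Your Jacobian bookkeeping is right and exposes a genuine slip: the paper's proof writes $w^r_\nu(Q_M)=\nu\, w^r(I_{\nu^{-1}M}\times I_{\nu^{-2}M})$ and so drops a factor $\nu^{3}$. Two details in your version need correcting, though. First, the normalization that makes the lemma hold with $\mu_{\nu^2}=\mu(\nu^{-2}\cdot)$ is $\nu^{-2}\, w^r\circ T^{-1}$, not $\nu^{-3}$; equivalently, keep the prefactor $\nu$ and use $\nu^{3}\mu(\nu^{-2}\cdot)$. Second, when $|\xi_0|$ is close to $1$, each row of height $\nu^{-1}M$ needs an outer box of width $3\nu^{-1}M$, and the inner cubes must have side at most $\nu^{-1}M/3$; the factors $2$ and $1/2$ you state do not suffice. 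Both corrections affect only uniform constants or an overall normalization, which is harmless by homogeneity in $w$ and $\mu$.
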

\begin{proof}
The doubling property is immediate. For the regularity properties we consider the linear transformation separately. First, let $\nu = 1$. $A^{-1}$ maps cubes $Q_M = I_M \times I_t^M$ to shearings, with time-interval unchanged. Since $|\xi_0| \leq 1$, clearly $A^{-1} Q_M \subseteq I'_{3M} \times I_t^M$ such that $w^r(A^{-1} Q_M) \leq 3 D_1 \mu(I_t^M)$. For the reverse bound cover $A^{-1} Q_M$ from the inside with cubes of size comparable to $M$. Clearly, since $|\xi_0| \leq 1$, with cubes of size $M/4$ we can cover the full height. Consequently, $w(A^{-1} Q_M) \geq D^{-1}_1 \frac{M}{4} \mu(J_M)$. So, $w^r \circ A^{-1}$ with $\mu$ satisfies Assumption \ref{assump:weights} with mildly enlarged regularity constants at scale $2^r$.

Next, we consider a weight $w^r$ and claim that $w^r_{\nu} = \nu w^r \circ D_{\nu}^{-1}$ satisfies Assumption \ref{assump:weights} at scale $\nu^2 2^r$ with Borel measure $\mu_{\nu^2}(A) = \mu(\nu^{-2} A)$ and regularity constants unchanged.

Indeed,
\begin{equation*}
\begin{split}
w_{\nu}^r(Q_M) &= \nu w^r(I_{\nu^{-1} M} \times I_{\nu^{-2} M}) = \nu \sum_{I_{\nu^{-1} M} \subseteq I_{\nu^{-2} M}} w^r(I_{\nu^{-1} M} \times I_{\nu^{-1} M}) \\
&\leq \nu D_1 \nu^{-1} M \sum_{I_{\nu^{-1} M}} \mu(I_{\nu^{-1} M}) \leq \nu D_1 \nu^{-1} M \sum_{I_{\nu^{-1} M}} \mu(I_{\nu^{-1} M}) \\
&\leq D_1 M \mu_{\nu^{-2}}(I_M).
\end{split}
\end{equation*}
The reverse bound is proved along the same lines.
\end{proof}

 The dependence on $D,D_1,D_2$ will be suppressed in the following. It is implicit that the decoupling estimate depends on $w^r$. 
The global decoupling constant $\text{Dec}(p,r,\alpha)$ is defined as infimum over $C> 0$ such that for any $\text{supp}(\hat{F}) \subseteq \mathcal{N}_{2^{-r}}(\mathbb{P}^1)$ like above, there are $w^{I,F}$, which are convex combinations of translates of $w^r$, such that
\begin{equation*}
\| F \|_{L^p(w^r)} \leq C \big( \sum_{I \in \mathbb{I}_{r/2}} \| P_I F \|^2_{L^p(w^{I,F})} \big)^{\frac{1}{2}}.
\end{equation*}
Let $I_i \subseteq [-1,1]$ and $\text{dist}(I_1,I_2) \geq 1/8$. $\text{BilDec}(p,r,\alpha)$ is defined as infimum over $C_2$ such that for $\text{supp}(\hat{F}) \subseteq \mathcal{N}_{2^{-r}}( \mathbb{P}^1)$ there are weights $w^{I,F}$ such that
\begin{equation*}
\| |P_{I_1} F P_{I_2} F|^{1/2} \|_{L^p(w^{r})} \leq C_2 \big( \sum_{I \in \mathbb{I}_{r/2}} \| P_I F \|^2_{L^p(w^{I,F})} \big)^{\frac{1}{2}}.
\end{equation*}

\subsection{Linear-to-bilinear reduction} Clearly, $\text{BilDec} \leq \text{Dec}$ by H\"older's inequality.
Like in the original decoupling iteration, we shall take advantage of a bilinear Kakeya estimate - here we use the fractal Kakeya estimate. To bring the bilinear estimate to bear, we reduce the linear to a bilinear estimate by a simple broad--narrow argument. We do not use a Whitney decomposition like in the previous section as this does not comply with a non-product weight. 

In the following we focus on the critical exponent $p=6$. We can estimate the linear decoupling constant in terms of the bilinear constant as follows:
\begin{lemma}
\label{lem:LinearBilinearReduction}
It holds for any $\delta > 0$:
\begin{equation*}
\text{Dec}(6,r,\alpha) \leq C_{\delta} 2^{r \delta} (1+ \sup_{1 \leq m \leq r} \text{BilDec}(6,m,\alpha)).
\end{equation*}
\end{lemma}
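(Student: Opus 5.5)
We run the standard broad--narrow argument, as in \cite{BourgainDemeter2015} and \cite[Section~10]{Demeter2020}, at a fixed intermediate scale $K = 2^{k}$ with $k$ chosen depending on $\delta$. Partition $[-1,1]$ into intervals $\tau \in \mathbb{I}_k$ of length $K^{-1}$ and write $F = \sum_{\tau} P_\tau F$. Pointwise we have the elementary dichotomy
\begin{equation*}
|F(x)| \lesssim \max_{\tau} |P_\tau F(x)| + K \max_{\substack{\tau_1,\tau_2 \in \mathbb{I}_k \\ \text{dist}(\tau_1,\tau_2) \geq K^{-1}}} |P_{\tau_1} F(x) P_{\tau_2} F(x)|^{\frac{1}{2}}.
\end{equation*}
We raise this to the sixth power, replace maxima by sums, and integrate against $w^r$.

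For the narrow part, $\sum_\tau \|P_\tau F\|^6_{L^6(w^r)} \leq (\sum_\tau \|P_\tau F\|^2_{L^6(w^r)})^3$. Each $P_\tau F$ is parabolically rescaled by $T = D_{K^{-1}} \circ A$ (the shear centred at $\tau$, then dilation with $\nu = K^{-1}$). By the Stability lemma, $w^r_\nu = \nu w^r \circ T^{-1}$ is again admissible at scale $2^{r-2k}$, with the same constants up to a uniform factor $C$ in $D_1$, and it satisfies \eqref{eq:QuantifiedLocalAhlfors} for the rescaled parameters. Hence the definition of $\text{Dec}(6,r-2k,\alpha)$ applies to the rescaled function. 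Reversing the scaling maps the intervals of $\mathbb{I}_{(r-2k)/2}$ to those of $\mathbb{I}_{r/2}(\tau)$, and convex combinations of translates of $w^r_\nu$ to convex combinations of translates of $w^r$. This gives
\begin{equation*}
\|P_\tau F\|_{L^6(w^r)} \leq \text{Dec}(6,r-2k,\alpha) \big(\sum_{I \in \mathbb{I}_{r/2}(\tau)} \|P_I F\|^2_{L^6(w^{I,F})}\big)^{\frac12},
\end{equation*}
where the Jacobian factors $\nu$ cancel by the normalisation of $w^r_\nu$.

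For the broad part, each pair $(\tau_1,\tau_2)$ at separation $\geq K^{-1}$ is only $K^{-1}$-transverse, not $1/8$-transverse. We therefore rescale the interval of length $\sim K^{-1}$ containing $\tau_1 \cup \tau_2$ (dilation by $\nu \sim K^{-1}$, again via the Stability lemma), after which the two pieces are unit-separated and $\text{BilDec}(6, r - 2k + O(1),\alpha)$ applies. The number of pairs is $K^2$, which gives the contribution
\begin{equation*}
C K^{O(1)} \sup_{m \leq r} \text{BilDec}(6,m,\alpha) \big(\sum_{I \in \mathbb{I}_{r/2}} \|P_I F\|^2_{L^6(w^{I,F})}\big)^{\frac12}.
\end{equation*}
If $r - 2k$ is too small for rescaling to make sense, we use the trivial bound: Cauchy--Schwarz over the $\lesssim 2^{r/2}$ intervals, together with $\text{Dec} \lesssim 1$ at bounded scales.

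Combining the two parts, and merging the weights from the narrow and broad terms into one convex combination per $I$ (taking the weight for $I$ to be a normalised average of those produced, which costs only a constant), we obtain
\begin{equation*}
\text{Dec}(6,r,\alpha) \leq C\,\text{Dec}(6,r-2k,\alpha) + C_K \sup_{m \leq r}\text{BilDec}(6,m,\alpha).
\end{equation*}
Iterating this $r/(2k)$ times gives $\text{Dec}(6,r,\alpha) \leq C^{r/(2k)} C_K (1+\sup_m \text{BilDec})$. Choosing $k$ so large that $C^{1/(2k)} \leq 2^{\delta}$ yields the claim.

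The main obstacle is the bookkeeping of the weights across iterations. The admissibility constants must not degrade: the constant $C$ from the Stability lemma would compound over the $r/(2k)$ rescalings if applied naively. To avoid this, each iterate should be viewed as a single rescaling of the original $w^r$ by the cumulative $\nu$, with cumulative shears still bounded since $|\xi_0| \leq 1$. The induction class is then always "rescalings of $w^r$" with constants $CD_1$, as prescribed in the setup. Second in difficulty is merging the different convex combinations arising for the same $I$ into one admissible $w^{I,F}$. This is harmless because the estimate only requires some convex combination, and $\|\cdot\|^2_{L^6(w')}$ is controlled by averages over the weights involved.
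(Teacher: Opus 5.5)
Your proposal is correct and uses the same broad--narrow reduction with parabolic rescaling as the paper. The only real difference is how the recursion $\text{Dec}(6,r,\alpha)\le C\,\text{Dec}(6,r-2k,\alpha)+C_K\sup_m \text{BilDec}(6,m,\alpha)$ is iterated.

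\begin{itemize}
\item You fix $k=k(\delta)$ and iterate about $r/(2k)$ times, so $C$ must be uniform over $\sim r$ steps. This is why your remark about compositions is essential for you: a composition of shears and parabolic dilations is again a single rescaling of $w^r$, with shear parameter the centre of the nested frequency interval and hence bounded. It keeps the admissibility constants at $CD_1$, $D_2$ instead of compounding.
\item The paper instead takes $k=\delta r$. Then only $1/(2\delta)$ iterations occur, so $C^{1/(2\delta)}$ is a harmless constant and only boundedly many rescalings are needed.
\item The price of the paper's choice is a broad-term loss $2^{ck}=2^{c\delta r}$ rather than your constant $C_K$.
\end{itemize}

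Two points need fixing.

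First, in the broad part, a pair $\tau_1,\tau_2$ at separation $d\in[K^{-1},2]$ must be rescaled by $\nu\sim d$, not $\nu\sim K^{-1}$. The interval containing $\tau_1\cup\tau_2$ has length $\sim d$, and rescaling far-separated pairs by $K^{-1}$ would push them outside $[-1,1]$. The correct rescaling yields $\text{BilDec}(6,r-2\log_2(1/d)+O(1),\alpha)$. This is exactly why the supremum over $m$ is needed, as in the paper's splitting $2^{-m}\le b-a<2^{-m+1}$.

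Second, your phrase ``normalised average'' needs to be made precise when merging weights. Give the narrow-term weight a fixed share, say $\tfrac12$, which costs only $2^{1/6}$. Distribute the remaining half over the $\lesssim K$ broad-term weights attached to $I$; this costs $K^{O(1)}$, which is absorbed into $C_K$. An equal share among all $\sim K+1$ weights would instead put a factor $K^{1/6}$ into $C$. The iteration would then lose $2^{r/12}$, in your route as well as in the paper's.
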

\begin{proof}
We find the pointwise bound by broad--narrow decomposition:
\begin{equation*}
|F(x)| \leq 4 \max_{I \in \mathbb{I}_k} |P_I F(x)| + 2^{c k} \sum_{\substack{J_1,J_2 \in \mathbb{I}_k : \\ 2 J_1 \cap 2 J_2 = \emptyset }} |P_{J_1} F(x) P_{J_2} F(x)|^{\frac{1}{2}}.
\end{equation*}
For the first term we find
\begin{equation*}
\| \max_{I \in \mathbb{I}_k} |P_I F| \|_{L^p(w)} \leq \big( \sum_{I \in \mathbb{I}_k} \| P_I F \|^p_{L^p(w)} \big)^{\frac{1}{p}}.
\end{equation*}
Fix $I \in \mathbb{I}_k$. By a Galilean transform $x \to x + 2 \xi_I t$, which in frequency space centers $I$ at the origin, and parabolic rescaling $x \to 2^k x$, $t \to 2^{2k} t$, we find
\begin{equation*}
\| P_I F \|_{L^p(w)} = \| \tilde{F} \|_{L^p(\tilde{w})}
\end{equation*}
with $\tilde{w}$ $\alpha$-admissible at scale $2^{r-2k}$ (and notably, constants $D,D_1,D_2$ from \eqref{eq:MildContinuity}, \eqref{eq:AlmostProductWeights}, \eqref{eq:QuantifiedLocalAhlfors} unchanged) and $\text{supp}(\widehat{\tilde{F}}) \subseteq \mathcal{N}_{2^{2k-r}}(\mathbb{P}^1)$. Consequently, by the definition of the decoupling constant and the embedding $\ell^2 \hookrightarrow \ell^p$ we find
\begin{equation*}
\| \max_{I \in \mathbb{I}_k} |P_I F| \|_{L^p(w)} \leq C \text{Dec}(6,r-2k,\alpha) \big( \sum_{I \in \mathbb{I}_{r/2}} \| P_I F \|_{L^p(w^{I,F})}^2 \big)^{\frac{1}{2}}.
\end{equation*}
For the bilinear expression with separated $\text{dist}(J_1,J_2) \geq 2^k$, we use again a parabolic rescaling to find
\begin{equation*}
\| |P_{J_1} F P_{J_2} F|^{\frac{1}{2}} \|_{L^p(w)} \lesssim \text{BilDec}(6,r-2m+1,\alpha) \big( \sum_{I \in \mathbb{I}_{r/2}} \| P_I F \|^2_{L^p(w^{I,F})} \big)^{\frac{1}{2}}.
\end{equation*}
Here $J_1 = [a,a+2^{-k}]$, $J_2 = [b,b+2^{-k}]$ and $2^{-m} \leq b-a < 2^{-m+1}$ for $m \in \N$. The details follow the classical case, for which we refer to \cite{Demeter2020}.

In summary (cf. \cite[Eq.~(10.7)]{Demeter2020}),
\begin{equation*}
\text{Dec}(6,r,\alpha) \leq C \text{Dec}(6,r-2k,\alpha) + 2^{c k} \sup_{1 \leq m \leq n} \text{BilDec}(6,m,\alpha).
\end{equation*}
Iterating the above yields
\begin{equation*}
\text{Dec}(6,r,\alpha) \lesssim C^{\frac{r}{2k}} (1 + 2^{ck} r \sup_{1 \leq m \leq r} \text{BilDec}(6,m,\alpha)).
\end{equation*}
Choosing $k = \delta r$ finishes the proof.
\end{proof}

It suffices to analyze the bilinear decoupling quantity. 
Let $\text{supp}(\hat{F_i}) \subseteq (I_i \times \R) \cap \mathcal{N}_{2^{-r}}(\mathbb{P}^1)$ with $|I_1| \sim |I_2| \sim \text{dist}(I_1,I_2) \sim 1$.
In the first step, to analyze the global quantity we decompose the plane into cubes of size $2^r$:
\begin{equation}
\label{eq:GlobalDecomposition}
\| |F_1 F_2|^{1/2} \|_{L^6(w)}^6 = \sum_{Q^r \in \mathcal{Q}^r(\R^2)} \| |F_1 F_2|^{1/2} \|_{L^6(Q^r)}^6.
\end{equation}
$\mathcal{Q}^r(\R^2)$ denotes a partition of $\R^2$ into cubes of size $2^r$. Most of the time it will be convenient to analyze a local averaged quantity:
\begin{equation*}
\| |F_1 F_2|^{1/2} \|_{L_{\sharp}^6(w)}^6 := \frac{1}{w(Q^r)} \| |F_1 F_2|^{1/2} \|_{L^6(Q^r_{\mu})}^6.
\end{equation*}
Having analyzed the local expression, we obtain a multi-scale inequality for the global decoupling inequality by virtue of \eqref{eq:GlobalDecomposition}. Recall that $2^r$ is the scale on which $w$ is $\alpha$-admissible.

\subsection{Jump-starting the decoupling iteration}
In the previous section we used Kakeya leaps essentially starting from the unit scale. Here we start at $m=(r/2)/2^s$ with $s$ determined below - starting from the unit scale will be too costly. We use the following which relies on the reverse H\"older inequality through the scattered modulation sum:
\begin{lemma}
Let $m= r/2^s$. The following holds:
\begin{equation*}
\begin{split}
&\quad \frac{1}{w(Q^r)} \| |F_1 F_2|^{1/2} \|^6_{L^6(Q^r_{\mu})} \\
&\leq 2^{6 r/2^s} \frac{1}{w(Q^r)} \sum_{Q^m \in \mathcal{Q}^{m,r}} w(Q^m) \big[ \sum_{J_1 \in \mathbb{I}_m(I_1)} \| P_{J_1} F \|^2_{L^2_{\sharp}(w_{Q^m_{\mu}})} \sum_{J_2 \in \mathbb{I}_m(I_2)} \| P_{J_2} F \|^2_{L^2_{\sharp}(w_{Q^m_{\mu}})} \big]^{\frac{3}{2}} \\
 &= 2^{\frac{6r}{2^s}} A_6^6(r/2^s,Q^r_{\mu},F).
\end{split}
\end{equation*}
\end{lemma}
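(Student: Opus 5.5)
The plan is to pass from the $L^6$ bilinear average on $Q^r_\mu$ to averages on the small cubes $Q^m$, and then to use the locally constant property on each $Q^m$ to replace the pointwise $L^6$ quantity by $L^2$ averages. The price of replacing $\ell^2$ sums by $\ell^\infty$ type bounds is a power of the number of intervals $\#\mathbb{I}_m(I_i) \lesssim 2^m$. Write $m = r/2^s$.

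\emph{Step 1 (localize to $Q^m$).} Since $\mathcal{Q}^{m,r}$ covers $Q^r$, we get
\begin{equation*}
\frac{1}{w(Q^r)}\int_{Q^r_\mu}|F_1F_2|^3 \le \frac{1}{w(Q^r)}\sum_{Q^m\in\mathcal{Q}^{m,r}} w(Q^m)\dashint_{Q^m_\mu}|F_1F_2|^3 .
\end{equation*}
The overlap is at most $C\log(r/m)=Cs$, but an upper bound only requires the covering property, so the overlap costs nothing here.

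\emph{Step 2 (pointwise bound via Cauchy--Schwarz and local constancy).} For $x\in Q^m$, Cauchy--Schwarz in $J_i\in\mathbb{I}_m(I_i)$ gives
\begin{equation*}
|F_i(x)|^2 \le \#\mathbb{I}_m(I_i)\sum_{J_i}|P_{J_i}F(x)|^2 \lesssim 2^m\sum_{J_i}|P_{J_i}F(x)|^2 .
\end{equation*}
Each $P_{J_i}F$ has Fourier support in a rectangle contained in a cube of side $\sim 2^{-m}$. So Lemma~\ref{lem:ScatteredModulationSum} with $\sigma=2^{-m}$ yields $|P_{J_i}F(x)|\le [P_{J_i}F]_\sigma(x_0)\le[[P_{J_i}F]]_\sigma(y)$ for all $x,y\in Q^m$, where $x_0=c(Q^m)$. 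Squaring, averaging in $y$ over $Q^m_\mu$, and using Jensen on $[[\cdot]]^2_\sigma$ gives
\begin{equation*}
\sup_{x\in Q^m}|P_{J_i}F(x)|^2\lesssim \dashint_{Q^m_\mu}[[P_{J_i}F]]_\sigma^2 .
\end{equation*}
Then, exactly as at the end of the proof of Proposition~\ref{prop:BilinearSquareFunction}, the translates $\tau^\sigma_{l_1+l_2}$ with weights $A_{l_1}A_{l_2}$ are absorbed into $w_{Q^m_\mu}$, giving $\lesssim \|P_{J_i}F\|^2_{L^2_\sharp(w_{Q^m_\mu})}$. This requires choosing the decay $M$ of $A_l$ larger than $100$, and using that the fractal averages over translated cubes $Q^m_\mu+2^m k$ are normalized by $w(Q^m+2^mk)$. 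That normalization is comparable to $w(Q^m)$ only up to the doubling constants in \eqref{eq:MildContinuity} and \eqref{eq:AlmostProductWeights}, and those constants grow polynomially in $|k|$.

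\emph{Step 3 (assemble).} Combining Steps 1 and 2, for $x\in Q^m$ we have
\begin{equation*}
|F_1F_2|^3(x)\lesssim 2^{3m}\Big[\prod_{i=1}^2\sum_{J_i}\|P_{J_i}F\|^2_{L^2_\sharp(w_{Q^m_\mu})}\Big]^{3/2}.
\end{equation*}
Averaging over $Q^m_\mu$ and summing as in Step 1 produces $A_6^6(m,Q^r_\mu,F)$, up to a factor $2^{3m}\le 2^{6m}=2^{6r/2^s}$. The implicit constant is absorbed because $m\ge 2$ means $2^{3m}C\le 2^{6m}$ for the relevant range. Alternatively, one can use the cruder count $\#\mathbb{I}_m\le 2^{m+1}$ and keep the slack.

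\emph{Main obstacle.} The delicate point is Step 2's absorption of the modulation translates into $w_{Q^m_\mu}$. One must check that $w(Q^m+2^m l)/w(Q^m)$ grows at most polynomially in $|l|$. This follows by iterating \eqref{eq:MildContinuity} on $O(\log|l|)$ dyadic enlargements together with \eqref{eq:AlmostProductWeights}, which gives a bound $(1+|l|)^{O(\log D)}$. That growth is beaten by taking $M$ large, so the fixed polynomial decay $(1+|k|)^{-100}$ in $w_{Q^m_\mu}$ still dominates after convolving with $A_{l_1}A_{l_2}$.
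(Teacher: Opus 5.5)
Your proof is correct and follows the paper's route. You localize to the cubes $Q^m$, pay $\#\mathbb{I}_m(I_i)\lesssim 2^m$ per factor through Cauchy--Schwarz in $J_i$, and then pass to $L^2$ averages via Lemma~\ref{lem:ScatteredModulationSum} with $\sigma=2^{-m}$, Jensen, and absorption of the modulation translates into $w_{Q^m_\mu}$. The paper applies Cauchy--Schwarz in $J_i$ to $L^6(Q^m_\mu)$-norms after separating $F_1,F_2$ by Cauchy--Schwarz in $x$, whereas you do it pointwise; these amount to the same estimate.

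Your stated ``main obstacle'' is not actually an obstacle. Since $w_{Q^m_\mu}$ is built from translates of $w\chi_{Q^m}$, the change of variables
\[
\int_{Q^m}|\tau^\sigma_l P_{J}F|^2\, w=\int|P_{J}F|^2\,(w\chi_{Q^m})(\cdot+2^m l)
\]
lands exactly on the $k=l$ term of $w_{Q^m_\mu}$, with the same normalization $w(Q^m)$. So you never need to compare $w(Q^m+2^m l)$ with $w(Q^m)$, and no iteration of the doubling condition is required.
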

\begin{proof}
First we decompose $Q^r$ into cubes $Q^m$ and then apply the Cauchy-Schwarz inequality:
\begin{equation*}
\begin{split}
&\quad \| |F_1 F_2|^{\frac{1}{2}} \|_{L^6(Q^r_{\mu})}^6 \\
 &= \sum_{Q^m \in \mathcal{Q}^{m,r}} \| |F_1 F_2|^{\frac{1}{2}} \|^6_{L^6(Q^m_{\mu})} \\
&\leq \sum_{Q^m \in \mathcal{Q}^{m,r}} \| F_1 \|^3_{L^6(Q^m_{\mu})} \| F_2 \|^3_{L^6(Q^m_{\mu})} \\
&\leq 2^{\frac{6 r}{2^s}} \sum_{Q^m \in \mathcal{Q}^{m,r}} \big( \sum_{J_1 \in \mathbb{I}_{r/2^s}(I_1)} \| P_{J_1} F_1 \|^2_{L^6(Q^m_{\mu})} \big)^{\frac{3}{2}} \big( \sum_{J_2 \in \mathbb{I}_{r/2^s}(I_2)} \| P_{J_2} F_2 \|^2_{L^6(Q^m_{\mu})} \big)^{\frac{3}{2}} \\
&= 2^{\frac{6 r}{2^s}} \sum_{Q^m \in \mathcal{Q}^{m,r}} w(Q^m_{\mu}) \big( \sum_{J_1 \in \mathbb{I}_{r/2^s}} \| P_{J_1} F_1 \|^2_{L^6_{\sharp}(Q^m_{\mu})} \big)^{\frac{3}{2}} \big( \sum_{J_2 \in \mathbb{I}_{r/2^s}} \| P_{J_2} F_2 \|^2_{L^6_{\sharp}(Q^m_{\mu})} \big)^{\frac{3}{2}}.
\end{split}
\end{equation*}
Now we estimate the averaged $L^6$-norm by an $L^2$-norm using the scattered modulation sum on the scale $\sigma = 2^{-m}$. Let $x_q \in Q^m_{\mu}$. Appealing to Lemma \ref{lem:ScatteredModulationSum}, we find
\begin{equation*}
\big( \dashint_{Q^m_{\mu}} |P_{J_1} F|^6 dx \big)^{\frac{1}{6}} \leq \big( \dashint_{Q^m_{\mu}} [P_{J_1} F]_{\sigma}^2(x_q) dx \big)^{\frac{1}{2}} \leq \big( \dashint_{Q^m_{\mu}} [[P_{J_1} F]]_{\sigma}^2(x) dx \big)^{\frac{1}{2}}.
\end{equation*}
Jensen's inequality yields
\begin{equation*}
[[P_{J} F]]_{\sigma}^2(x) \lesssim \sum_{l_1,l_2 \in \Z^2} A_{l_1} A_{l_2} |P_{J} F|^2(x+2^m (l_1 +l_2)).
\end{equation*}
We find
\begin{equation*}
\big( \int_{Q^m_{\mu}} [[P_{J_1} F]]_{\sigma}^2(x) dx \big)^{\frac{1}{2}} \lesssim \big( \int_{\R^2} |P_{J_1} F|^2(x) w_{Q^m_{\mu}}(x) dx \big)^{\frac{1}{2}},
\end{equation*}
which finishes the proof.
\end{proof}

\subsection{The partial decoupling} Proposition \ref{prop:BilinearSquareFunctionEstimate} is generalized by the following. We introduce the weights $w^{J}_{Q^{r}_{\mu}}$, which are combinations of $(w \chi_{3 Q^r})(\cdot + v)$ (see Proposition \ref{prop:FractalKakeya} for notation), and defined as follows: For $J \in \mathbb{I}_m$ let $\theta = (J \times \R) \cap \mathcal{N}_{2^{-r}}(\mathbb{P}^1)$ be a rectangle of size $2^{-m} \times 2^{-2m}$ and $\theta^*$ be the dual set, which is a rectangle of size $2^m \times 2^{2m}$ into directions $e_{\theta}$, $n_{\theta}$. We let
\begin{equation}
\label{eq:GeneralizedWeight}
w_{3 Q^r_{\mu}}^{J}(x) = \sum_{k,l \in \Z^2} (1+|k|)^{-100} (1+|l|)^{-200} (w \cdot \chi_{3 Q^r})(x + k 2^m + l_1 e_{\theta} 2^m + l_2 n_{\theta} 2^{2m})
\end{equation}
and
\begin{equation*}
\| F \|^{p}_{L^{p}_{\sharp}(w_{3 Q^r_{\mu}}^{J_i})} := \frac{1}{w(3 Q^r)} \| F \|_{L^p(w_{3 Q^r_{\mu}}^{J_i})}^p.
\end{equation*}

\begin{proposition}
\label{prop:PartialDecoupling}
Let notation be like above, $p \in [4,6]$, and $r/2 \geq 2m$. Let $\kappa_p = \frac{p-4}{p-2}$. Then it holds
\begin{equation}
\label{eq:PartialDecoupling}
A_p(m,Q^r_{\mu},F) \leq C_{\delta} 2^{r \delta} (C \log(r/m))^c 2^{\frac{2m}{p} \big( \frac{1-\alpha}{2} \big)} A_p(2m,Q^r_{\mu},F)^{1-\kappa_p} \tilde{D}_p(m,Q^r_{\mu},F)^{\kappa_p}
\end{equation}
with $c,C$ uniform in $m, r, Q^r_{\mu}$, and $F$, and
\begin{equation*}
\tilde{D}_p(m,Q^r_{\mu},F) = \big( \sum_{J \in \mathbb{I}_m} \| P_J F \|^2_{L^p_{\sharp}(w_{3 Q^r_{\mu}}^J)} \big)^{\frac{1}{2}}.
\end{equation*}
\end{proposition}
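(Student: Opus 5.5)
The plan follows the Bourgain--Demeter ball inflation plus Hölder interpolation, as in \cite[Ch.~10]{Demeter2020}, with the bilinear Kakeya replaced by Proposition \ref{prop:FractalKakeya}.

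\emph{Reduction to $r=2m$.} First we treat $r = 4m$ at the level of a single cube $Q^{4m}\in\mathcal{Q}^{4m,r}$. The general case $r/2\geq 2m$ then follows by summing over the cubes $Q^{4m}$ with weights $w(Q^{4m})/w(Q^r)$, using the $C\log(r/m)$ overlap of $\mathcal{Q}^{m,r}$, and applying Hölder in the sum over cubes. This is exactly the step deferred in the proof of Proposition \ref{prop:BilinearSquareFunction}. It is also where the translated weights $w^J_{3Q^r_\mu}$ come from: the averages over $3Q^{4m}$ are dominated by averages against convex combinations of translates of $w\chi_{3Q^r}$ along the tube tiling dual to $\theta_J$. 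This explains the $l$-sum with directions $e_\theta 2^m$, $n_\theta 2^{2m}$ in \eqref{eq:GeneralizedWeight}.

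\emph{Ball inflation.} Fix $Q^{2m}$ with $M=2^m$. For each sub-cube $Q^m$ we write $D_2(m,Q^m_\mu)^2$ as a product of two factors $\sum_{J_i}\|P_{J_i}F\|^2_{L^2_\sharp(w_{Q^m_\mu})}$. Exactly as in the proof of Proposition \ref{prop:BilinearSquareFunction}, we apply the wave packet decomposition of $P_{J_i}F$ at frequency scale $2^{-m}\times 2^{-2m}$, with tubes of size $M\times M^2$. By the locally constant property (Lemma \ref{lem:ScatteredModulationSum}), each factor is pointwise on $Q^m$ bounded by $g_i=\sum_{T_i}\tilde a_{T_i}M^{-3}1_{T_i}$ plus rapidly decaying tails. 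The tails are handled by the dyadic decomposition $G_{i0}+\sum_A G_{iA}$ already carried out there.

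We then need the $L^{p/2}$ version of the Kakeya step, namely
\[
\dashint_{Q^{2m}_\mu}(g_1g_2)^{p/4}\lesssim \kappa_{M^2}\Big(\dashint_{3Q^{2m}_\mu}g_1\Big)^{\frac p4}\Big(\dashint_{3Q^{2m}_\mu}g_2\Big)^{\frac p4}\cdot(\text{$\ell^{p/2}$ correction}).
\]
To obtain it, we pigeonhole the coefficients $\tilde a_{T_i}$ into dyadic levels (costing $C_\delta 2^{r\delta}$ after absorbing $\log$ factors). For tubes of comparable coefficient, we interpolate Proposition \ref{prop:FractalKakeya} (the $L^1$ bilinear bound for $(g_1g_2)$) with the trivial $L^\infty$ bound. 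This is the standard route where the gain is $\kappa_{M^2}^{1/\ldots}$. Under \eqref{eq:LocalAhlfors} we have $\kappa_{M^2}\lesssim M^{1-\alpha}$, and after taking $p$-th roots this produces the factor $2^{\frac{2m}{p}\frac{1-\alpha}{2}}$. Afterwards, Lemma \ref{lem:WavePacketLocalization} and local $L^2$-orthogonality (Lemma \ref{lem:LocalL2Orthogonality}) convert the $\ell^1$ tube sums on $3Q^{2m}$ into $\ell^2$ sums of $\|P_{J'}F\|_{L^{p/2}}$-type averages at scale $2m$. Here $L^{p/2}$ arises because the tube sums over $J'\subseteq J$ are controlled via the $L^{p/2}$ average of $\sum_{J'}|P_{J'}F|^2$ over $3Q^{2m}$, using the fractal scattered modulation argument to pass back to $w$-averages.

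\emph{Hölder interpolation.} The resulting quantity at scale $2m$ is $D_{p/2}$-type, namely $\ell^2$ of $L^{p/2}_\sharp$ norms. We interpolate $L^{p/2}$ between $L^2$ and $L^p$:
\[
\|G\|_{L^{p/2}}\leq\|G\|_{L^2}^{1-\kappa_p}\|G\|_{L^p}^{\kappa_p}, \qquad \frac{2}{p}=\frac{1-\kappa_p}{2}+\frac{\kappa_p}{p},
\]
which gives $\kappa_p=\frac{p-4}{p-2}$. We then apply Hölder in the $\ell^2$ sums and over $Q^{2m}\subseteq Q^r$. The $L^2$ part reassembles into $A_p(2m,Q^r_\mu,F)$, since the $L^2$ averages over $Q^{2m}$ form exactly $D_2(2m,\cdot)$. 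The $L^p$ part is bounded by the single-cube quantity $\tilde D_p(m,Q^r_\mu,F)$ with weights $w^J_{3Q^r_\mu}$. This step uses that $\sum_{Q^{2m}}w(Q^{2m})\|\cdot\|^p_{L^p(w_{3Q^{2m}_\mu}^J)}\lesssim\|\cdot\|^p_{L^p(w^J_{3Q^r_\mu})}$, by finite overlap and the doubling condition \eqref{eq:MildContinuity}.

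\emph{Main obstacle.} The hardest step is the fractal $L^{p/2}$ Kakeya inflation. Proposition \ref{prop:FractalKakeya} is only an $L^1$ bilinear estimate, so we must run the pigeonholing carefully: on each level set the $L^{p/4}$ norm must reduce to an $L^1$ bound with the correct power of $\kappa$. At the same time, the translated fractal weights (decaying only polynomially) must remain admissible convex combinations of translates of $w$ throughout.
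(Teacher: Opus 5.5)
There is a genuine gap, and it sits exactly where the $p>4$ content of the proposition lives: producing the factor $\tilde D_p(m,Q^r_\mu,F)^{\kappa_p}$.

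You keep $L^2$ averages on the cubes $Q^m$ and dominate them by the $L^2$ wave packet functions $g_i=\sum_{T_i}\tilde a_{T_i}M^{-3}1_{T_i}$. You then want to control $\dashint_{Q^{2m}_\mu}(g_1g_2)^{p/4}$ by pigeonholing the tube coefficients and interpolating Proposition~\ref{prop:FractalKakeya} with $\|g_1g_2\|_\infty$. That loses a power of $M$. The reason is that $\|g_i\|_\infty$ counts how many \emph{directions} pass through a point, and $\ell^2L^p$ averages do not see this.

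Here is a concrete example. Take the unweighted case with all $|\alpha_T|^2=\lambda$. Let each $g_i$ consist of one direction whose $M$ tubes fill $Q^{2m}$, together with $N\approx M^{2/p}$ further directions carrying one tube each, all through a common point. Then $\dashint(g_1g_2)^{p/4}$ and the local form of the right-hand side of \eqref{eq:PartialDecoupling} are both $\approx(\lambda M^{-3})^{p/2}$. (The local right-hand side means the $\ell^2L^2$ and $\ell^2L^p$ averages over $3Q^{2m}$, taken with exponents $1-\kappa_p$ and $\kappa_p$.) However, $\|g_1g_2\|_\infty^{p/4-1}\dashint g_1g_2\approx(\lambda M^{-3})^{p/2}N^{p/2-2}$, an excess of $M^{1-4/p}$. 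Pigeonholing coefficients does nothing here, since they are all equal. So the ``$\ell^{p/2}$ correction'' you leave open is not a technicality; it is the whole estimate.

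The missing idea is the Bourgain--Demeter $L^{p/2}$ ball inflation, which is the paper's Lemma~\ref{lem:PartialDecouplingAux}. Its steps are:
\begin{enumerate}
\item Pass from $L^2$ to $L^{p/2}$ averages on $Q^m$ by H\"older.
\item Pigeonhole the \emph{per-direction} sizes $\|P_JF\|_{L^{p/2}_\sharp(w^J_{3Q^{2m}_\mu})}\sim S_i$, at cost $C_\delta 2^{r\delta}$.
\item Apply H\"older $\ell^2\to\ell^{p/2}$ with the cardinalities $N_i$. This linearizes the power $p/4$, so Proposition~\ref{prop:FractalKakeya} applies directly to the tube-constant functions $\max_{y\in 2T_J(x)}[P_JF]^{p/2}_{\sigma,100}(y)$.
\item Use the anisotropic modulation sum to convert these back into $|P_JF|^{p/2}$ integrated against $w^J_{3Q^{2m}_\mu}$. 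This is where those weights come from.
\item Use the comparable sizes $S_i$ to reverse H\"older.
\end{enumerate}
The result is $\ell^2_{J\in\mathbb{I}_m}$ of $L^{p/2}$ norms on $3Q^{2m}$, still at scale $m$.

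Only then does one interpolate $L^{p/2}$ between $L^2$ and $L^p$. Almost orthogonality (Lemma~\ref{lem:AlmostOrthogonalityFractal}) is applied to the $L^2$ factor alone, to reach $\mathbb{I}_{2m}$ and hence $A_p(2m,Q^r_\mu,F)$. The $L^p$ factor stays at scale $m$ and gives $\tilde D_p(m,Q^r_\mu,F)$.

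Your ordering (orthogonality first, then interpolation at scale $2m$) cannot produce this, for three reasons.
\begin{itemize}
\item Lemmas~\ref{lem:WavePacketLocalization} and~\ref{lem:LocalL2Orthogonality} output $L^2$ norms, not $L^{p/2}$ norms.
\item For $J'\in\mathbb{I}_{2m}$, the $L^2$, $L^{p/2}$ and $L^p$ averages over $Q^{2m}$ are essentially comparable by local constancy, so interpolating there carries no information.
\item Interpolating at scale $2m$ would in any case land on $\ell^2_{\mathbb{I}_{2m}}L^p$ rather than on $\tilde D_p(m,Q^r_\mu,F)$.
\end{itemize}
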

This extends Proposition \ref{prop:BilinearSquareFunction} to $p \in [4,6]$, compare \cite[Proposition~10.23]{Demeter2020}.

\smallskip

\emph{Proof of Proposition \ref{prop:PartialDecoupling}}
We start with applying H\"older's inequality to find
\begin{equation*}
\begin{split}
&\quad A_p^p(m,Q^{2m}_{\mu},F) \\
&= \frac{1}{w(Q^r)} \sum_{Q^m \in \mathcal{Q}^{m,r}} w(Q^m) \big[ \sum_{J_1 \in \mathbb{I}_m(I_1)} \| P_{J_1} F \|^2_{L^2_{\sharp}(w_{Q^m_{\mu}})} \sum_{J_2 \in \mathbb{I}_m(I_2)} \| P_{J_2} F \|^2_{L^2_{\sharp}(w_{Q^m_{\mu}})} \big]^{\frac{p}{4}} \\
&\lesssim \frac{1}{w(Q^r)} \sum_{Q^m \in \mathcal{Q}^{m,r}} w(Q^m) \big[ \sum_{J_1 \in \mathbb{I}_m(I_1)} \| P_{J_1} F \|^2_{L^{\frac{p}{2}}_{\sharp}(w_{Q^m_{\mu}})} \sum_{J_2 \in \mathbb{I}_m(I_2)} \| P_{J_2} F \|^2_{L^{\frac{p}{2}}_{\sharp}(w_{Q^m_{\mu}})} \big]^{\frac{p}{4}}.
\end{split}
\end{equation*}
Next, this is expanded as
\begin{equation}
\label{eq:NestedApp}
    \begin{split}
        &\quad A_p^p(m,Q^r_{\mu},F) \\
        &\lesssim w(Q^r)^{-1} \sum_{Q^{2m} \in \mathcal{Q}^{2m,r}} w(Q^{2m}) \frac{1}{w(Q^{2m})} \\
        &\quad \sum_{Q^m \in \mathcal{Q}^{m,2m}} w(Q^m) \big[ \sum_{J_1 \in \mathbb{I}_m(I_1)} \| P_{J_1} F \|^2_{L^{\frac{p}{2}}_{\sharp}(w_{Q^m_{\mu}})} \sum_{J_2 \in \mathbb{I}_m(I_2)} \| P_{J_2} F \|^2_{L^{\frac{p}{2}}_{\sharp}(w_{Q^m_{\mu}})}  ]^{\frac{p}{4}}.
        \end{split}
\end{equation}

For $2 m \leq r/2$ we analyze the local quantity
\begin{equation*}
(A_p^{loc}(Q^{2m}_{\mu},F))^p = \frac{1}{w(Q^{2m})} \sum_{Q^m \in \mathcal{Q}^{m,2m}(Q^{2m})} w(Q^m) \big[ \prod_{i=1}^2 \sum_{J_i \in \mathbb{I}_m(I_i)} \| P_{J_i} F \|^2_{L^{\frac{p}{2}}_{\sharp}(w_{Q^m_\mu})} \big]^{\frac{p}{4}}
\end{equation*}
with the understanding that the weight is $\alpha$-admissible on the larger ambient scale $2^r$. This is the content of Lemma \ref{lem:PartialDecouplingAux}. 
\begin{lemma}
\label{lem:PartialDecouplingAux}
The following holds:
\begin{equation*}
\begin{split}
    &\quad A^{loc}_p(Q^{2m}_{\mu},F) \leq C_\delta 2^{r \delta} 2^{\frac{2m}{p} \big( \frac{1-\alpha}{2} \big) } \times \\
    &\quad \big( \sum_{J_1 \in \mathbb{I}_m(I_1)} \| P_{J_i} F \|^{2}_{L^{p/2}_{\sharp}(w_{3 Q^{2m}_{\mu}}^{J_1})} \sum_{J_2 \in \mathbb{I}_m(I_1)} \| P_{J_2} F \|^{2}_{L^{p/2}_{\sharp}(w_{3 Q^{2m}_{\mu}}^{J_2})} \big)^{1/4}
\end{split}
\end{equation*}
for weights $w_{3 Q^{2m}_{\mu}}^{J_i}$ defined in \eqref{eq:GeneralizedWeight}. 
\end{lemma}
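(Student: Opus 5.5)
The plan is to mirror the proof of Proposition \ref{prop:BilinearSquareFunction}, with $L^{p/2}$ replacing $L^2$, and to interpolate with a trivial $L^\infty$-type bound. Write $q = p/2 \in [2,3]$. For each $J_i \in \mathbb{I}_m(I_i)$ we expand $P_{J_i} F$ in wave packets adapted to $\theta_{J_i}$, so that $T_i$ has dimensions $2^m \times 2^{2m}$. Using the scattered modulation sum (Lemma \ref{lem:ScatteredModulationSum}) on the scale $\sigma = 2^{-m}$, we pass from $\|P_{J_i}F\|^2_{L^{q}_\sharp(w_{Q^m_\mu})}$ to a function that is locally constant on $Q^m$. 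Concretely, we bound $\|P_{J_i}F\|^q_{L^q_\sharp(w_{Q^m_\mu})}$ by $G_{J_i}(x)$ for $x\in Q^m$, where $G_{J_i}$ is a modulated average of $|P_{J_i}F|^q$ at scale $2^m$. This average is in turn dominated by $\sum_{T_i} c_{T_i} 2^{-3m}1_{T_i}$, with coefficients $c_{T_i}$ comparable to $\int_{T_i}|P_{J_i}F|^q$ smeared by the polynomial factors appearing in \eqref{eq:GeneralizedWeight}. The smearing by $l_1 e_\theta 2^m + l_2 n_\theta 2^{2m}$ is exactly what produces the weight $w^{J}_{3Q^{2m}_\mu}$.

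We then restrict to the model case $r' = 2m$, i.e.\ the cube $Q^{2m}$. The tubes with $T_i \cap 16Q^{2m}=\emptyset$ are treated by the dyadic decay splitting ($A \ge 16$) already carried out in the proof of Proposition \ref{prop:BilinearSquareFunction}.

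\emph{Key step (interpolation).} For $q=2$ (the case $p=4$), the claim is Proposition \ref{prop:FractalKakeya} applied on $Q^{2m}$ with $N=2^{2m}$. The local Ahlfors assumption \eqref{eq:QuantifiedLocalAhlfors} bounds the supremum there by $D_2\, 2^{m(1-\alpha)}$, giving
\begin{equation*}
\frac{1}{w(Q^{2m})}\sum_{Q^m} w(Q^m)\prod_{i=1}^2 \sum_{J_i}\|P_{J_i}F\|^2_{L^2_\sharp(w_{Q^m_\mu})} \lesssim 2^{m(1-\alpha)} \prod_{i=1}^2\sum_{J_i}\|P_{J_i}F\|^2_{L^2_\sharp(w^{J_i}_{3Q^{2m}_\mu})}.
\end{equation*}
Taking fourth roots yields the factor $2^{\frac{2m}{4}\cdot\frac{1-\alpha}{2}}$. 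For general $q$ we use H\"older's inequality over the $Q^m$-sum, in the form $[\prod_i X_i]^{p/4} \le [\prod_i X_i]\cdot \sup_{Q^m}[\prod_i X_i]^{(p-4)/4}$. The bilinear factor is estimated by the fractal Kakeya bound applied to $L^q$-norms: Minkowski's inequality shows $\sum_{J}\|P_JF\|^2_{L^q_\sharp}$ is dominated by tube sums with coefficients $\|P_JF\|^{2}_{L^q(T)}$-type quantities, to which \eqref{eq:FractalKakeya} applies verbatim, since it only needs nonnegative coefficients. The sup factor is trivially bounded by $\prod_i\sum_{J_i}\|P_{J_i}F\|^2_{L^q_\sharp(w^{J_i}_{3Q^{2m}_\mu})}$, using $w(Q^m)$ versus $w(Q^{2m})$ and the doubling condition \eqref{eq:MildContinuity}. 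The exponent of the Kakeya constant then becomes $1/p$ after taking $p$-th roots, matching $2^{\frac{2m}{p}\frac{1-\alpha}{2}}$. The factor $C_\delta 2^{r\delta}$ absorbs the logarithmic overlap losses and the constants $D,D_1,D_2$.

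\emph{Main obstacle.} The most delicate point is the passage from the Euclidean $L^q$-localization of wave packets to the fractal weight. Lemma \ref{lem:WavePacketLocalization} is only an $L^2$ statement, so for $q>2$ I would avoid wave packet coefficients altogether. Instead I would use the locally constant property on tubes $T$, dual to $\theta$, of size $2^m\times 2^{2m}$. The plan is to write $|P_JF|^q$ averaged over $Q^m$ as dominated by its average over parallel tube translates, via a scattered modulation sum adapted to the rectangle $\theta$ (Lemma \ref{lem:ScatteredModulationSum} after an affine change of variables). The Euclidean tube averages are then converted back to $w$-averages by reapplying the modulation sum on cubes of size $2^m$ inside each tube. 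Assumption \ref{assump:weights} gives $w(T)\sim 2^m\mu(\pi_2 T)$, and this is what creates precisely the translates in \eqref{eq:GeneralizedWeight}. Keeping the decay exponents ($100$, $200$) consistent through these two conversions is where I expect the bookkeeping to be tight.
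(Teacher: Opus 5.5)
Your $p=4$ case and your handling of the locally constant property and of the weight \eqref{eq:GeneralizedWeight} are fine, but the interpolation step for $p>4$ has a genuine gap. Write $X_i(Q^m)=\sum_{J_i}\|P_{J_i}F\|^2_{L^q_\sharp(w_{Q^m_\mu})}$ and $\tilde X_i=\sum_{J_i}\|P_{J_i}F\|^2_{L^q_\sharp(w^{J_i}_{3Q^{2m}_\mu})}$. You claim $\sup_{Q^m}[X_1X_2(Q^m)]^{(p-4)/4}\lesssim[\tilde X_1\tilde X_2]^{(p-4)/4}$. Comparing normalizations only gives $X_i(Q^m)\le (w(3Q^{2m})/w(Q^m))^{2/q}\tilde X_i$. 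By Assumption \ref{assump:weights}, the ratio $w(Q^{2m})/w(Q^m)\sim 2^m\mu(J_{2^{2m}})/\mu(J_{2^m})$ is $\gtrsim 2^m$, and indeed $\gtrsim 2^{m(1+\alpha)}$ under \eqref{eq:QuantifiedLocalAhlfors}. Doubling, iterated over the $m$ dyadic scales from $2^m$ to $2^{2m}$, only gives $D^{O(m)}$. The loss is real. Take $P_{J_1}F$, $P_{J_2}F$ to be single transverse wave packets crossing in one cube $Q^m_0$. Each tube carries only a $\sim 2^{-m}$ fraction of $w(3Q^{2m})$, so $X_1X_2(Q^m_0)\gtrsim 2^{4m/q}\tilde X_1\tilde X_2$. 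Your scheme therefore produces an extra factor $(w(3Q^{2m})/w(Q^m))^{2(p-4)/p}$, a positive power of $2^m$, which ruins the exponent $\frac{2m}{p}\frac{1-\alpha}{2}$ and with it the bootstrap. In that same example, your bilinear-factor bound (Kakeya on the squared norms, then the power-mean inequality over the $\sim 2^m$ parallel tubes) is valid but lossy by exactly $2^{2m(1-2/q)}$. That is precisely the amount by which the sup factor is underestimated, so the two steps must be coupled rather than estimated separately.

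The missing idea is dyadic pigeonholing of the global quantities $\|P_{J_i}F\|_{L^{p/2}_\sharp(w^{J_i}_{3Q^{2m}_\mu})}$, which do not depend on $Q^m$. Intervals whose norm is below $2^{-Cr}$ times the maximum are removed by the triangle inequality; only there is your crude comparison via $w_{Q^m_\mu}\le w^{J_i}_{3Q^{2m}_\mu}$ harmless. The remaining intervals fall into $O(r)$ dyadic classes. Restricting to one class $\mathbb{I}'_m(I_i)$ of $N_i$ intervals with norms $\sim S_i$ costs $C_\delta 2^{r\delta}$, and this, not overlap, is the source of that factor. Then apply H\"older in $J$, not in $Q^m$: $(\sum_{J\in\mathbb{I}'_m(I_i)}a_J^2)^{p/4}\le N_i^{\frac p4-1}\sum_J a_J^{p/2}$. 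This turns each factor on $Q^m$ into the linear quantity $\sum_{J_i}\|P_{J_i}F\|^{p/2}_{L^{p/2}_\sharp(w_{Q^m_\mu})}$. After the scattered modulation sum, this is dominated by $\sum_{J_i}G_{J_i}^{p/2}$ with $G_{J_i}$ constant on $2^m\times 2^{2m}$ tubes. Proposition \ref{prop:FractalKakeya} then applies once, with no power-mean loss, and the anisotropic modulation sum converts the tube maxima into the weight $w^{J_i}_{3Q^{2m}_\mu}$. Finally, since all retained norms are $\sim S_i$, you have $N_i^{\frac p4-1}\sum_{J_i\in\mathbb{I}'_m(I_i)}\|P_{J_i}F\|^{p/2}_{L^{p/2}_\sharp(w^{J_i}_{3Q^{2m}_\mu})}\sim (N_iS_i^2)^{p/4}$, which reverses H\"older. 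Taking the $p$-th root gives the factor $2^{\frac{2m}{p}\frac{1-\alpha}{2}}$.
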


\begin{proof}[Proof~of~Lemma~\ref{lem:PartialDecouplingAux}]
For $I \in \mathbb{I}_m$ the weight $w^I_{3 Q^{2m}_{\mu}}$ satisfies $w_{Q^m_{\mu}} \leq w^I_{3 Q^{2m}_{\mu}}$. For this reason, 
\begin{equation*}
\max_{Q^m \in \mathcal{Q}^{m,r}} \| P_{J_i'} F \|_{L^p_{\sharp}(w_{Q^m_{\mu}})} \leq 2^{r} \max_{J_i \in \mathbb{I}_m(I_i)} \| P_{J_i} F \|_{L^p_{\sharp}(w^{J_i}_{3 Q^{2m}_{\mu}})}.
\end{equation*}
Consequently, for
\begin{equation*}
\| P_{J_i'} F \|_{L^{\frac{p}{2}}_{\sharp}(w_{Q^m_{\mu}})} \leq 2^{- C r} \max_{J_i \in \mathbb{I}_m} \| P_{J_i} F \|_{L^{\frac{p}{2}}_{\sharp}(w^{J_i}_{3 Q^{2m}_{\mu}})}
\end{equation*}
and $C \geq 1$ chosen large enough, we can estimate this contribution to $A_p^p(m,r,Q^r_{\mu})$ directly by the triangle inequality.
 
We can suppose that $\| P_{J_i} F \|_{L^{\frac{p}{2}}_{\sharp}(w_{3 Q^{2m}_{\mu}}^{J_i})}$ ranges over $c r$ dyadic scales because
\begin{equation*}
\| P_{J_i} F \|_{L^{\frac{p}{2}}_{\sharp}(w_{Q^m_{\mu}})} \leq \| P_{J_i} F \|_{L^{\frac{p}{2}}_{\sharp}(w^{J_i}_{3 Q^{2m}_{\mu}})} .
\end{equation*}
(Notably, the constant $c$ can be chosen independent of $Q^{2m}$.)
For this reason, we can choose scales $S_i$ such that 
\begin{equation*}
\sum_{J_i \in \mathbb{I}_m(I_i)} \| P_{J_i} F \|^2_{L^{\frac{p}{2}}_{\sharp}(w_{Q^m_{\mu}})} \leq C_\delta 2^{r \delta} \sum_{J_i \in \mathbb{I}'_m(I_i)} \| P_{J_i} F \|^2_{L^{\frac{p}{2}}_{\sharp}(w_{Q^m_{\mu}})}
\end{equation*}
for $\mathbb{I}_m'(I_i) \subseteq \mathbb{I}_m(I_i)$ with $\| P_{J_i} F \|_{L^{\frac{p}{2}}_{\sharp}(w^{J_i}_{Q^{2m}_{\mu}})} \sim S_i$.
Note that the scales $S_i$ may vary over $Q^{2m} \in \mathcal{Q}^{2m,r}$. We find for $A_p^{loc}$:
\begin{equation*}
\begin{split}
(A_p^{loc})^p &\leq C_{\delta} 2^{r \delta} \frac{1}{w(Q^{2m})} \sum_{Q^m \in \mathcal{Q}^{m,2m}(Q^{2m})} w(Q^m) \big[ \prod_{i=1}^2 \sum_{J_i \in \mathbb{I}_m'(I_i)} \| P_{J_i} F \|^2_{L^{\frac{p}{2}}_{\sharp}(w_{Q^{m}_{\mu}})} \big]^{\frac{p}{4}} \\
&\quad + 2^{-cr} \big( \prod_{i=1}^2 \sum_{J_i \in \mathbb{I}_m(I_i)} \| P_{J_i} F \|^2_{L^{p/2}_{\sharp}(w_{3Q^{2m}_{\mu}})} \big). 
\end{split}
\end{equation*}
In the following we estimate the first term. The remainder term is already acceptable.
Let $N_i$ be the number of intervals $J_i \in \mathbb{I}'_m(I_i)$, over which we are summing in the above display.
We can apply H\"older's inequality to find
\begin{equation*}
\begin{split}
&\quad \frac{1}{w(Q^{2m})} \sum_{Q^m \in \mathcal{Q}^{m,2m}} \big[ \prod_{i=1}^2 \big( \sum_{J_i \in \mathbb{I}'_m(I_i)} \| P_{J_i} F \|^2_{L^{\frac{p}{2}}_{\sharp}(w_{Q^m_{\mu}})} \big)^{\frac{1}{2}} \big]^{\frac{p}{2}} \\
&\lesssim \frac{1}{w(Q^{2m})} \prod_{j=1}^2 N_j^{\frac{p}{2} \big( \frac{1}{2} - \frac{2}{p} \big)} \sum_{Q^m \in \mathcal{Q}^{m,2m}} w(Q^m) \big[ \prod_{i=1}^2 \big( \sum_{J_i \in \mathbb{I}'_m(I_i)} \| P_{J_i} F \|^{\frac{p}{2}}_{L^{\frac{p}{2}}_{\sharp}(w_{Q^m_{\mu}})} \big) \big] \\
&= \frac{1}{w(Q^{2m})} \prod_{j=1}^2 N_j^{\frac{p}{2} \big( \frac{1}{2} - \frac{2}{p} \big)} \sum_{Q^m \in \mathcal{Q}^{m,2m}} w(Q^m) \big[ \prod_{i=1}^2 \dashint \sum_{J_i} |P_{J_i} F |^{\frac{p}{2}} w_{Q^m_{\mu}} \big].
\end{split}
\end{equation*}
We use the scattered modulation sum to rewrite with $\sigma = 2^{-m}$ for $x \in Q^m$:
\begin{equation*}
\begin{split}
\frac{1}{w(Q^m)} \int_{\R^2} \sum_{J_i \in \mathbb{I}'_m} |P_{J_i} F|^{\frac{p}{2}} w_{Q^m_{\mu}} &\lesssim \sum_{k \in \Z^2} (1+|k|)^{-100} \sum_{J_i \in \mathbb{I}'_m} [P_{J_i} F]^{\frac{p}{2}}_{\sigma}(x_q + k 2^m) \\
&\lesssim \sum_{J_i \in \mathbb{I}'_m} \sum_{k \in \Z^2} (1+|k|)^{-100} [[ P_{J_i} F ]]^{\frac{p}{2}}_{\sigma}(x+k 2^m) \\
&\lesssim \sum_{J_i \in \mathbb{I}'_m} \sum_{k \in \Z^2} (1+|k|)^{-100} | P_{J_i} F |^{\frac{p}{2}}(x+ k 2^m).
\end{split}
\end{equation*}
For brevity we let $\sum_{k \in \Z^2} B_k | P_{J_i} F |^{\frac{p}{2}}(x+ k 2^m) =: [ P_{J_i} F ]_{\sigma,100}^{p/2}(x)$, $B_k = (1+|k|)^{-100}$. Next, we average over $x \in Q^m$ to find
\begin{equation*}
\dashint_{Q^m_{\mu}} \big[ \prod_{i=1}^2 \frac{1}{w(Q^m)} \int \sum_{J_i} |P_{J_i} F|^{\frac{p}{2}} w_{Q^m_{\mu}} \big] \lesssim \dashint_{Q^m_{\mu}} \prod_{i=1}^2 \sum_{J_i \in \mathbb{I}'_m(I_i)} [P_{J_i} F]^{p/2}_{\sigma,100}(x)  dx.
\end{equation*}
Carrying out the sum over $Q^m \in \mathcal{Q}^{m,2m}$, we find
\begin{equation*}
\begin{split}
    &\quad \frac{1}{w(Q^{2m})} \sum_{Q^m \in \mathcal{Q}^{m,2m}} w(Q^m) \big[ \prod_{i=1}^2 \frac{1}{w(Q^m)} \int \sum_{J_i} |P_{J_i} F |^{\frac{p}{2}} w_{Q^m_{\mu}} \big] \\
    &\lesssim \dashint_{Q^{2m}_{\mu}} \sum_{J_1 \in \mathbb{I}'_m(I_1)} \big[ P_{J_1} F \big]^{p/2}_{\sigma,100}(x) \sum_{J_2 \in \mathbb{I}'_m(I_2)} \big[ P_{J_2} F \big]^{p/2}_{\sigma,100}(x) dx.
\end{split}
\end{equation*}
To bring the fractal Kakeya estimate into play, for given $J_i$ we tile $Q^{2m}$ with tubes $T \in \mathbb{T}_{J_i}$ of size $2^m \times 2^{2m}$ aligned dually to $\theta \supseteq \text{supp}(\widehat{P_{J_i} F})$, $\theta$ a $2^{-m} \times 2^{-2m}$-rectangle into directions $(e_\theta,n_{\theta})$. Then, we define
\begin{equation*}
    G_{J_i}^{p/2}(x) = \max_{y \in 2 T_{J_i}(x)} [ P_{J_i} F ]^{p/2}_{\sigma,100}(y),
\end{equation*}
with $T_{J_i}(x) \in \mathbb{T}_{J_i}$ denoting the tube encasing $x$. These functions are constant on tubes $T_{J_i} \in \mathbb{T}_{J_i}$ and we find applying Proposition \ref{prop:FractalKakeya} by hypothesis \eqref{eq:LocalAhlfors}
\begin{equation*}
    \begin{split}
        &\quad \dashint_{Q^{2m}_{\mu}} \sum_{J_1 \in \mathbb{I}'_m(I_1)} [P_{J_1} F]^{p/2}_{\sigma,100}(x) \sum_{J_2 \in \mathbb{I}'_m(I_2)} [P_{J_2} F]^{p/2}_{\sigma,100}(x) dx \\
        &\lesssim \dashint_{Q^{2m}_{\mu}} \sum_{J_1} G_{J_1}^{p/2}(x) \sum_{J_2} G_{J_2}^{p/2}(x) dx \\
        &\lesssim 2^{2m( \frac{1-\alpha}{2} )} \dashint_{3 Q^{2m}_{\mu}} \sum_{J_1} G_{J_1}^{p/2}(x) dx \dashint_{3 Q^{2m}_{\mu}} \sum_{J_2} G_{J_2}^{p/2}(x) dx.
    \end{split}
\end{equation*}

To compare $G_{J_i}$ with $P_{J_i} F$ we use an anisotropic scattered modulation sum. Indeed, by anisotropic rescaling, Lemma \ref{lem:ScatteredModulationSum} yields with $A_l = (1+|l|)^{-200}$
\begin{equation*}
  \max_{y \in 2 T_{J_i}(x)}  |P_{J_i} F|(y) \leq C \sum_{l \in \Z^2} A_l |P_{J_i} F|(x+2^m e_{\theta} l_1 + 2^{2m} n_{\theta} l_2). 
\end{equation*}
Applying Jensen's inequality gives
\begin{equation*}
    [P_{J_i} F]^{p/2}_{\sigma,100}(y) \lesssim \sum_{k \in \Z^2} B_k |P_{J_i} F|^{p/2}(y+k 2^m).
\end{equation*}
Another application of Jensen's inequality combined with the scattered anisotropic modulation sum estimate, we find
\begin{equation*}
    \max_{y \in 2 T_{J_i}(x)} [P_{J_i} F]^{p/2}_{\sigma,100}(y) \lesssim \sum_{k \in \Z^2} B_k \sum_{l \in \Z^2} A_l |P_{J_i} F|^{p/2}(x+k 2^m + l_1 2^m e_{\theta} + l_2 2^{2m} n_{\theta}).
\end{equation*}
By translation, we find
\begin{equation*}
    \begin{split}
    &\quad \int_{\R^2} \sum_{k \in \Z^2} \sum_{l \in \Z^2} B_k A_l |P_{J_i} F|^{p/2}(x+k 2^m + l_1 2^m e_{\theta} + l_2 2^{2m} n_{\theta}) (w \chi_{3 Q^{2m}})(x)  dx \\
    &= \sum_{k,l \in \Z^2} B_k A_l \int_{\R^2} |P_{J_i} F|^{p/2}(x) (w \chi_{3 Q^{2m}})(x +k 2^m + l_1 2^m e_{\theta} + l_2 2^{2m} n_{\theta}) dx \\
    &=: \int_{\R^2} |P_{J_i} F|^{p/2}(x) w_{3 Q^{2m}_{\mu}}^{J_i} dx
    \end{split}
\end{equation*}
Since $Q^m \in \mathcal{Q}^{m,r}$, that is $Q^m \subseteq Q^{2m} \subseteq 3 Q^{2m}$, this weight indeed satisfies
$ w_{Q^m_{\mu}} \leq w_{3Q^{2m}_{\mu}}^{J_i}$. In conclusion,
\begin{equation}
\label{eq:PartialDecouplingAuxIII}
\begin{split}
    &\quad w(Q^{2m})^{-1} \sum_{Q^m \in \mathcal{Q}^{m,r}} w(Q^m) \big[ \sum_{J_1 \in \mathbb{I}'_m(I_1)} \| P_{J_1} F \|^2_{L^{p/2}_{\sharp}(w_{Q^m_{\mu}})} \sum_{J_2 \in \mathbb{I}'_m(I_2)} \| P_{J_2} F \|^2_{L^{p/2}_{\sharp}(w_{Q^m_{\mu}})}  ]^{\frac{p}{2}} \\
    &\leq C 2^{2m ( \frac{1-\alpha}{2} )} \prod_{i=1}^2 N_i^{\frac{p}{2} (\frac{1}{2} - \frac{2}{p}) } \sum_{J_1 \in \mathbb{I}'_m(I_1)} \|P_{J_1} F \|^{p/2}_{L^{p/2}_{\sharp}(w_{3 Q^{2m}_{\mu}}^{J_1})} \sum_{J_2 \in \mathbb{I}'_m(I_2)} \|P_{J_2} F \|^{p/2}_{L^{p/2}_{\sharp}(w_{3Q^{2m}_{\mu}}^{J_2})}.
    \end{split}
\end{equation}
By the comparability $\| P_{J_i} F \|_{L^{p/2}_{\sharp}(w_{3 Q^{2m}_{\mu}}^{J_i})} \sim S_i$ and the number of $J_i \in \mathbb{I}'_m(I_i)$ in the restricted sum being comparable to $N_i$, we can reverse H\"older's inequality to find
\begin{equation*}
  \eqref{eq:PartialDecouplingAuxIII}  \leq C 2^{2m \big( \frac{1-\alpha}{2} \big)} \big( \sum_{J_1 \in \mathbb{I}_m(I_1)} \| P_{J_i} F \|^{2}_{L^{p/2}_{\sharp}(w_{3 Q^{2m}_{\mu}}^{J_i})} \sum_{J_2 \in \mathbb{I}_m(I_1)} \| P_{J_2} F \|^{2}_{L^{p/2}_{\sharp}(w_{3 Q^{2m}_{\mu}}^{J_2})} \big)^{p/4}. 
\end{equation*}
Taking the $p$th root finishes the proof of Lemma \ref{lem:PartialDecouplingAux}.
\end{proof}

The following identity follows from two applications of H\"older's inequality:
\begin{equation}
    \label{eq:PartialDecouplingAuxII}
    \begin{split}
    &\quad \big( \sum_{J_1 \in \mathbb{I}_m(I_1)} \| P_{J_1} F \|^2_{L^{\frac{p}{2}}_{\sharp}(w_{3 Q^{2m}_{\mu}}^{J_1})} \sum_{J_2 \in \mathbb{I}_m(I_2)} \| P_{J_2} F \|^2_{L^{\frac{p}{2}}_{\sharp}(w_{3 Q^{2m}_{\mu}}^{J_1})} \big)^{\frac{1}{4}} \\
    &\lesssim \big( \sum_{J_1 \in \mathbb{I}_m(I_1)} \| P_{J_1} F \|^2_{L^2_{\sharp}(w_{3 Q^{2m}_{\mu}}^{J_1})} \sum_{J_2 \in \mathbb{I}_m(I_2)} \| P_{J_2} F \|^2_{L^2_{\sharp}(w_{3 Q^{2m}_{\mu}}^{J_2})} \big)^{\frac{1-\kappa_p}{4}} \\ 
    &\quad \times \big( \sum_{J_1 \in \mathbb{I}_m(I_1)} \| P_{J_1} F \|^2_{L^p_{\sharp}(w_{3 Q^{2m}_{\mu}}^{J_1})} \sum_{J_2 \in \mathbb{I}_m(I_2)} \| P_{J_2} F \|^2_{L^p_{\sharp}(w_{3 Q^{2m}_{\mu}}^{J_2})} \big)^{\frac{\kappa_p}{4}}. 
    \end{split}
\end{equation}

To further process the $L^2$-averages, we argue similarly to the previous section. We have the following:
\begin{lemma}[Almost~orthogonality~for~fractal~weights]
\label{lem:AlmostOrthogonalityFractal}
   Let $r=2m$, $J_i \in \mathbb{I}_m$ and with remaining notation like above, the following holds:
    \begin{equation}
        \label{eq:LocalOrthogonality}
        \| P_{J_i} F \|^2_{L^2_{\sharp}(w_{3 Q^r_{\mu}}^{J_i})} \lesssim \sum_{J_i' \in \mathbb{I}_{2m}(J_i)} \| P_{J_i'} F \|^2_{L^2_{\sharp}(w_{Q^r_{\mu}})}.
    \end{equation}
\end{lemma}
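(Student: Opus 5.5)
The plan follows the last part of the proof of Proposition \ref{prop:BilinearSquareFunction}, with two extra steps: the translates in the definition \eqref{eq:GeneralizedWeight} of $w^{J_i}_{3Q^r_{\mu}}$, and the passage from the fractal weight to a standard polynomial weight and back. With $r=2m$, the function $P_{J_i}F$ has Fourier support in $\theta$, a $2^{-m}\times 2^{-2m}$ rectangle. Its refinements $P_{J_i'}F$, $J_i'\in\mathbb{I}_{2m}(J_i)$, have Fourier supports in $2^{-2m}$-boxes. These boxes are finitely overlapping even after enlarging them by $c2^{-r}$.

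\emph{Step 1: unfold the weight.} Write $w^{J_i}_{3Q^r_{\mu}}$ as a weighted sum of translates $(w\chi_{3Q^r})(\cdot+v_{k,l})$ with $v_{k,l}=k2^m+l_12^me_\theta+l_22^{2m}n_\theta$ and coefficients $(1+|k|)^{-100}(1+|l|)^{-200}$. Then
\[
\| P_{J_i} F \|^2_{L^2(w^{J_i}_{3Q^r_{\mu}})} = \sum_{k,l} (1+|k|)^{-100}(1+|l|)^{-200} \int_{3Q^r-v_{k,l}} |P_{J_i}F|^2 w(\cdot+v_{k,l}).
\]
Each translate $3Q^r - v_{k,l}$ is contained in $3Q^r + B(0,C2^{r}(1+|k|+|l|))$, since $2^{2m}=2^r$.

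\emph{Step 2: fractal weight to standard weight.} On each cube $Q'$ of size $2^m$, $P_{J_i}F$ is locally constant in the sense of Lemma \ref{lem:ScatteredModulationSum} with $\sigma=2^{-m}$. Hence $\int_{Q'}|P_{J_i}F|^2 w \lesssim \frac{w(Q')}{|Q'|}\int |[[P_{J_i}F]]_\sigma|^2 \chi_{Q'}$. By Assumption \ref{assump:weights}, $w(Q')\sim 2^m\mu(J_{2^m})$. Comparing with $w(3Q^r)\sim 2^r\mu(3I_t^r)$ requires a lower bound on $\mu(J_{2^m})/\mu(I^r_t)$, and this is the wrong direction: the ratio can be tiny. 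Therefore I would not convert the fractal weight into a Lebesgue weight on the small scale. Instead I keep the fractal structure and use the locally constant property at scale $2^r$, which costs only doubling via \eqref{eq:MildContinuity}. Concretely, following the argument after \eqref{eq:RigidFractalKakeya}, I use Lemma \ref{lem:ScatteredModulationSum} with $\tilde\sigma=2^{-r}$ applied to $P_{J_i'}F$, rather than to $P_{J_i}F$. After local orthogonality, each $P_{J_i'}F$ has Fourier support in a $2^{-r}$-cube, so $|P_{J_i'}F|^2$ is comparable on $Q^r$-translates to its Lebesgue average there, up to modulation sums.

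The order is then as follows.
\begin{enumerate}
\item Bound each translate term by a Lebesgue-weighted quantity: $\int |P_{J_i}F|^2 w(\cdot+v)\chi_{3Q^r-v} \le \|P_{J_i}F\|^2_{L^\infty\text{-ish}}\, w(3Q^r)$. More precisely, using the scale-$2^m$ locally constant property along $\theta^*$-tubes together with \eqref{eq:IntersectionEstII}, i.e.\ $w(T)/w(3Q^r)\sim |T|/|3Q^r|$ for tubes $T\subseteq 3Q^r$ transverse to the $x_1$-axis, one gets
\[
\frac{1}{w(3Q^r)}\int |P_{J_i}F|^2 w(\cdot+v)\chi_{3Q^r-v}\lesssim \frac{1}{|Q^r|}\|P_{J_i}F\|^2_{L^2(w^K_{Q^r+v})}.
\]
This uses that $|P_{J_i}F|^2$ is essentially constant on $\theta^*$-tubes (via the anisotropic modulation sum, as in Lemma \ref{lem:PartialDecouplingAux}), and that the fractal measure of a full tube is proportional to its Lebesgue measure by \eqref{eq:MildContinuity} and \eqref{eq:AlmostProductWeights}.
\item Sum over $k,l$, so that the translates assemble into a weight $w^{K'}_{Q^r}$ with decay about $90$.
\item Apply Lemma \ref{lem:LocalL2Orthogonality} at scale $2^r$ to split $P_{J_i}F$ into the pieces $P_{J_i'}F$.
\item Return to $w_{Q^r_\mu}$ via the scale-$2^{-r}$ scattered modulation sum, exactly as in the proof of Proposition \ref{prop:BilinearSquareFunction}.
\end{enumerate}

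\emph{Main obstacle.} The main difficulty is item 1: replacing the fractal measure by Lebesgue measure without loss. This only works because $|P_{J_i}F|^2$ is locally constant on full $2^m\times 2^{2m}$ tubes, which traverse the whole time-interval of $3Q^r$. Such tubes see the fractal set in a way proportional to $\mu(3I_t^r)$, and this is precisely where transversality $\angle(n_\theta,e_1)\ge 1/8$ enters. I would implement this by tiling $3Q^r-v$ with the tubes $T\in\mathbb{T}_{J_i}$ and bounding $\sup_T|P_{J_i}F|^2$ by an anisotropic modulation average, which produces the extra $l$-sum in the weight. Summability of the decay requires $200-3$ powers, which is why the exponent $200$ is built into \eqref{eq:GeneralizedWeight}.
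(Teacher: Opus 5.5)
Your argument is correct in substance, but the key first step takes a different route from the paper. The overall skeleton matches: convert the translated fractal weight $w^{J_i}_{3Q^r_\mu}$ into a Lebesgue polynomial weight at scale $2^r$, apply Lemma \ref{lem:LocalL2Orthogonality} to pass to $J_i'\in\mathbb{I}_{2m}(J_i)$, and return to $w_{Q^r_\mu}$ with scattered modulation sums at $\tilde\sigma=2^{-r}$.

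\textbf{How the paper does the first stage.} It splits the domain into cubes $Q^r+n2^r$ and expands $P_{J_i}F$ in wave packets. Jensen gives $|P_{J_i}F|^2\lesssim\sum_T|\alpha_T|^2 2^{-3m/2}|\phi_T|$, and each $|\phi_T|$ is bounded by a tail-weighted sum of indicators of parallel tubes. The resulting $2^{-4m}\sum_T|\alpha_T|^2$ is then controlled by Lemma \ref{lem:WavePacketLocalization}. This forces separate treatment of tubes at distance $\sim A2^r$ and of cubes with $|n|\ge 10$.

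\textbf{How you do it.} You work translate by translate and pointwise. You tile by $\theta^*$-tubes and bound $\sup_T|P_{J_i}F|^2$ by an anisotropic modulation average, the device the paper uses for $G_{J_i}$ in Lemma \ref{lem:PartialDecouplingAux}. You then use $w((T+v)\cap 3Q^r)\lesssim 2^{-m}w(3Q^r)$, which is the upper bound in \eqref{eq:IntersectionEstII}.

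\textbf{Comparison.} Both routes rest on the same geometric fact: a transverse $2^m\times 2^{2m}$ tube sees the fractal set in Lebesgue proportion, whereas isotropic $2^m$-cubes do not. Your route needs no $\ell^2$ wave packet localization and no near/far tube dichotomy, and the decay off $Q^r-v$ can be taken as high as you like. The paper's route has the advantage of reusing machinery already set up in Proposition \ref{prop:BilinearSquareFunction}.

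\textbf{One point needs repair: the decay exponent.} Summing the translates via $w^K_{Q^r-v}\le(1+2^{-r}|v|)^K w^K_{Q^r}$ forces $K<98$; that is your ``decay about $90$''. But your step 4 needs the Lebesgue weight to decay at least like the $(1+|k|)^{-100}$ built into $w_{Q^r_\mu}$. With exponent $90$ you land on a weaker fractal weight than the one in \eqref{eq:LocalOrthogonality}. A loss of this kind also could not be repeated across the iteration. The fix is a finer count. Take $x$ at distance $\sim D2^r$ from $Q^r$ and split the terms:
\begin{itemize}
\item translates with $|\Delta_{k,l}|\le D2^r/2$ contribute $\lesssim D^{-K}$;
\item translates with $|l|\gtrsim D$ contribute $\lesssim D^{-198}$;
\item the remaining translates have $|k|\gtrsim D2^m$. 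For fixed $l$, only $O(E^2 2^{2m})$ values of $k$ bring $Q^r-\Delta_{k,l}$ within $E2^r$ of $x$, so these contribute $\lesssim 2^{-98m}D^{-100}$.
\end{itemize}
Altogether the weight is $\lesssim D^{-100}$, which is exactly the count the paper performs for $|n|\ge 10$.

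\textbf{A minor slip in your Step 2.} The direction is reversed. The isotropic conversion fails because $\mu(J_{2^m})/\mu(I_t^r)$ can be much larger than $2^{-m}$ (by \eqref{eq:LocalAhlfors} it is only $\lesssim 2^{-m\alpha}$), not because it can be small.
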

\begin{proof}
In the first step we recast the fractal weight as the usual weight.
We break the lefthand side into dyadic cubes of size $2^r$. For the centered cube $Q^r$ and cubes $Q^r + n 2^r$, $|n| \leq 10$, we shall prove
\begin{equation}
\label{eq:FractalAlmostOrthogonalityAux}
\begin{split}
&\; \frac{1}{w(Q^r)} \int_{Q^r+n 2^r} \sum_{k,l \in \Z^2} (1+|k|)^{-100} (1+|l|)^{-200} \\
&\quad  (w \chi_{3 Q^r})(x-k2^m - l_1 e_{\theta} 2^m - l_2 n_{\theta}2^{2m}) |P_{J_i} F|^2(x) dx \lesssim \| P_{J_i} F \|^2_{L^2_{\sharp}(w_{Q^r}^M)},
\end{split}
\end{equation}
which suffices by $w(3 Q^r) \sim w(Q^r)$ by virtue of \eqref{eq:MildContinuity} and \eqref{eq:AlmostProductWeights}. For the sake of brevity let $\Delta_{k,l} = k2^m + l_1 e_{\theta} 2^m + l_2 n_{\theta}2^{2m}$.

To this end, we plug in the wave packet decomposition.
Let $\T$ be a tiling of $\R^2$ with rectangles of size $2^m \times 2^{2m}$ aligned dual to $\text{supp}(\widehat{P_{J_i} F})$. 
With notation from Section \ref{section:Preliminaries}, we find after applying Jensen's inequality
    \begin{equation*}
        |P_{J_1} F|^2(x) \lesssim \sum_T |\alpha_T|^2 2^{-\frac{3m}{2}} |\phi_T(x)|
    \end{equation*}
    and $\phi_T$ being $L^1$-normalized and $|\phi_T(x)| \lesssim_N 2^{-\frac{3m}{2}} (1+ d_T(x))^{-N}$ by \eqref{eq:PointwiseBound}.

    We further break
    \begin{equation*}
        |P_{J_1} F|^2(x) \lesssim \sum_{T : \text{dist}(T,Q^r) \lesssim 2^r} |\alpha_T|^2 2^{-\frac{3m}{2}} |\phi_T(x)| + \sum_{A \geq 16} \sum_{T: \text{dist}(T,Q^r) \sim A 2^r} |\alpha_T|^2 2^{-\frac{3m}{2}} |\phi_T|.
    \end{equation*}
First estimate the tubes intersecting $16 Q^r$. By the estimate
\begin{equation*}
|\phi_T(x)| \lesssim 2^{-\frac{3m}{2}} \sum_{\tilde{T} \parallel T} 1_{\tilde{T}}(x) (1+\text{dist}_{\text{par}}(\tilde{T},T))^{-M},
\end{equation*}
we find by appealing to Lemma \ref{lem:WavePacketLocalization} for $|n| \leq 10$
\begin{equation*}
\begin{split}
&\quad \frac{1}{w(Q^r)} \int_{(Q^r + n 2^r) \cap (3 Q^r+\Delta_{k,l})} \sum_{T: T \cap 16 Q^r \neq \emptyset } |\alpha_T|^2 2^{-\frac{3m}{2}} 2^{-\frac{3m}{2}} \\
&\quad \quad \times \sum_{\tilde{T} \parallel T} 1_{\tilde{T}}(x) (1+\text{dist}_{\text{par}}(\tilde{T},T))^{-M} w(x- \Delta_{k,l}) dx \lesssim 2^{-4m} \| P_{J_i} F\|^2_{L^2(w_{Q^r}^M)}
\end{split}
\end{equation*}
for a large integer $M \geq 200$. The tubes with $A 2^r \leq  \text{dist}(T,Q^r) \leq 2 A 2^r$, $A \geq 16$ can be estimated by similar means, see the proof of Proposition \ref{prop:BilinearSquareFunction}. Here we use that $\text{dist}(Q^r + n 2^r,T) \sim A 2^r$ for $|n| \leq 10$ and \eqref{eq:FractalAlmostOrthogonalityAux} follows. 

Next, we consider for $|n| \geq 10$
\begin{equation*}
\frac{1}{w(Q^r)} \int_{(Q^r + n 2^r) \cap (3 Q^r+\Delta_{k,l})} w(x-\Delta_{k,l}) |P_{J_i} F|^2(x) dx.
\end{equation*}
Let $A^n_{k,l} = (Q^r + n 2^r) \cap (3 Q^r+\Delta_{k,l})$ and for $A_{k,l}^n \neq \emptyset$ we find by repeating the above steps
\begin{equation*}
\frac{1}{w(Q^r)} \int_{(Q^r + n 2^r) \cap (3 Q^r+\Delta_{k,l})} w(x-\Delta_{k,l}) |P_{J_i} F|^2(x) dx \lesssim \| P_{J_i} F \|^2_{L^2_{\sharp}(w^M_{Q^r+n2^r})}.
\end{equation*}
We evaluate the sum over $k,l$ such that $A^n_{k,l} \neq \emptyset$. To this end, observe that $\{ |k| \gtrsim |n| 2^m \vee |l| \gtrsim |n| \}$ must hold true for $A^n_{k,l} \neq \emptyset$. For fixed $l$, there are only $\lesssim 2^{2m}$ possible $k$ for which $A^n_{k,l} \neq \emptyset$. We find 
\begin{equation*}
\begin{split}
\sum_{|k| \gtrsim |n| 2^m, \, l: A^n_{k,l} \neq \emptyset} (1+|k|)^{-100} (1+|l|)^{-200} &\lesssim \sum_{l \geq 0} (1+|l|)^{-200} (1+|n| 2^m)^{-100} 2^{2m} \\
 &\lesssim (1+|n| )^{-100} 2^{-98m}.
 \end{split}
\end{equation*}
It is straightforward that $\sum_{|l| \gtrsim |n|, A^n_{k,l} \neq \emptyset} (1+|k|)^{-100} (1+|l|)^{-200} \lesssim (1+|n|)^{-198}$. 
Consequently,
\begin{equation*}
\begin{split}
&\quad \sum_{k,l \in \Z^2} (1+|k|)^{-100} (1+|l|)^{-200} \frac{1}{w(Q^r)} \int_{A^n_{k,l}} |P_{J_i} F|^2(x) dx \\
&\lesssim (1+|n|)^{-100} \| P_{J_i} F \|^2_{L^2_{\sharp}(w^M_{Q^r + n 2^r})}.
\end{split}
\end{equation*}

We sum the weights as
\begin{equation*}
\sum_{n \in \Z^2} (1+|n|)^{-100} \| P_{J_i} F \|^2_{L^2_{\sharp}(w^M_{Q^r + n 2^r})} \lesssim \| P_{J_i} F \|^2_{L^2_{\sharp}(w_{Q^r})}.
\end{equation*}
%
Now we use local $L^2$-orthogonality provided by Lemma \ref{lem:LocalL2Orthogonality}:
\begin{equation*}
\| P_{J_i} F \|^2_{L^2_{\sharp}(w_{Q^r})} \lesssim \sum_{\substack{I \in \mathbb{I}_{2m}, \\ I \subseteq J_i}} \| P_I F \|^2_{L^2_{\sharp}(w_{Q^r})}.
\end{equation*}
%
%
%
    Finally, we use the locally constant property to recast the $L^2_{\sharp}(w_{Q^r})$-norm as $L^2_{\sharp}(w_{Q_{\mu}^r})$, which was already carried out at the end of the proof of Proposition \ref{prop:BilinearSquareFunction}.
\end{proof}

\begin{proof}[Proof~of~Proposition~\ref{prop:PartialDecoupling},~Conclusion]
We plug the estimate from Lemma \ref{lem:PartialDecouplingAux} into \eqref{eq:NestedApp}, use \eqref{eq:PartialDecouplingAuxII}, and apply Lemma \ref{lem:AlmostOrthogonalityFractal} to continue
\begin{equation*}
\begin{split}
       \eqref{eq:NestedApp} &\lesssim_{\delta} 2^{2m ( \frac{1-\alpha}{2} )+r \delta} \frac{1}{w(Q^r)} \sum_{Q^{2m} \in \mathcal{Q}^{2m,r}} w(Q^{2m}) \\
        &\quad \times \big[ \sum_{J_1 \in \mathbb{I}_{2m}(I_1)} \| P_{J_1} F \|^2_{L^2_{\sharp}(w_{Q^{2m}_{\mu}})} \sum_{J_2 \in \mathbb{I}_{2m}(I_2)} \| P_{J_1} F \|^2_{L^2_{\sharp}(w_{Q^{2m}_{\mu}})} \big]^{(1-\kappa_p) \frac{p}{4}} \\ 
        &\quad \times \big[ \sum_{J_1 \in \mathbb{I}_{m}(I_1)} \| P_{J_1} F \|^2_{L^p_{\sharp}(w^{J_1}_{Q^{2m}_{\mu}})} \sum_{J_2 \in \mathbb{I}_{m}(I_2)} \| P_{J_2} F \|^2_{L^p_{\sharp}(w^{J_2}_{Q^{2m}_{\mu}})} \big]^{\kappa_p \cdot \frac{p}{4}} \\
     &\lesssim_{\delta} 2^{2m ( \frac{1-\alpha}{2} )+r \delta} A_p^{(1-\kappa_p) p}(2m,Q^r_{\mu},F) \\
        &\, \cdot \bigg[ \frac{1}{w(Q^r)} \sum_{Q^{2m} \in \mathcal{Q}^{2m,r}} w(Q^{2m}) \big[ \prod_{i=1}^2 \sum_{J_i \in \mathbb{I}_m(I_i)} \| P_{J_i} F \|^2_{L^p_{\sharp}(w^{J_i}_{3Q^{2m}_{\mu}})}  \big]^{\frac{p}{4}}  \bigg]^{\kappa_p} \\
        &\lesssim_{\delta} 2^{2m ( \frac{1-\alpha}{2} )+r \delta}  (C\log(r/m))^c A_p^{(1-\kappa_p)p}(2m,Q^r_{\mu},F) \tilde{D}_p^{p \cdot \kappa_p}(2m,Q^r_{\mu},F).
    \end{split}
\end{equation*}
The ultimate estimate follows from Minkowski's inequality and the  overlap of $Q^m \in \mathcal{Q}^{m,r}$. The finite overlap at any scale $Q^l \in \mathcal{Q}^{l,2l}$ accumulates an overlap $C \log(r/m)$. Taking the $p$th root finishes the proof of Proposition \ref{prop:PartialDecoupling}.
\end{proof}

\subsection{Iterating the partial decoupling}

We start with iterating Proposition \ref{prop:PartialDecoupling} to obtain a local multi-scale inequality for $A_6((r/2)/2^s,Q^r_{\mu},F)$. Recall that we stop at interval length $2^{-r/2}$. So we will iterate Proposition \ref{prop:PartialDecoupling} $s$ times.

\begin{proposition}[Multi-scale inequality]
\label{prop:MultiScale}
The following estimate holds:
\begin{equation}
\label{eq:LocalMultiscale}
\begin{split}
&\quad A_6((r/2)/2^s,Q^r_{\mu},F) \\
&\leq C_{\delta,s} 2^{\frac{r}{6 \cdot 2^s}(\frac{1-\alpha}{2} ) + r \delta} A_6^{\frac{1}{2}}((r/2)/2^{s-1},Q^r_{\mu},F) \tilde{D}_6^{\frac{1}{2}}((r/2)/2^s,Q^r_{\mu},F) \\
&\leq \ldots \\
&\leq C_{\delta,s}^2 2^{\frac{r}{6 \cdot 2^s}(\frac{1-\alpha}{2})s + 2r \delta} A_6^{\frac{1}{2^{s}}}(r/2,r,Q^r_{\mu}) \tilde{D}_6^{\frac{1}{2}}((r/2)/2^s,Q^r_{\mu},F) \ldots \tilde{D}_6(r/4,Q^r_{\mu},F)^{\frac{1}{2^{s}}}.
\end{split}
\end{equation}
\end{proposition}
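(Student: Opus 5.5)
The plan is to apply Proposition \ref{prop:PartialDecoupling} with $p=6$ repeatedly, climbing through the double-dyadic scales $m_j = (r/2)/2^{j}$ for $j = s, s-1, \ldots, 1$. For $p=6$ we have $\kappa_6 = \frac{6-4}{6-2} = \frac12$. Hence at scale $m=m_j$, noting $2m_j = m_{j-1} \leq r/2$, the proposition gives
\begin{equation*}
A_6(m_j,Q^r_{\mu},F) \leq C_{\delta} 2^{r\delta} (C\log(r/m_j))^c \, 2^{\frac{2m_j}{6}\left(\frac{1-\alpha}{2}\right)} A_6^{\frac12}(m_{j-1},Q^r_{\mu},F) \, \tilde{D}_6^{\frac12}(m_j,Q^r_{\mu},F).
\end{equation*}
For $j=s$ we have $2m_s = r/2^s$, so the exponent is $\frac{r}{6\cdot 2^s}\cdot\frac{1-\alpha}{2}$, which matches the first line of \eqref{eq:LocalMultiscale}. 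The factor $(C\log(r/m_s))^c \lesssim (Cs)^c$ is absorbed into $C_{\delta,s}$.

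Next, I substitute the same inequality for $A_6(m_{j-1},Q^r_{\mu},F)$ into the square-root factor, and continue inductively until the final $A_6$-term sits at scale $m_0 = r/2$. After $k$ steps the exponent of $A_6(m_{s-k})$ is $2^{-k}$. The $\tilde D_6(m_{s-i})$ factor enters with exponent $2^{-(i+1)}$, and the multiplicative constants accumulate as products $\prod_i (\cdot)^{2^{-i}}$. Since $\sum_i 2^{-i} \leq 2$, the losses $C_{\delta}2^{r\delta}$ and the logarithmic factors aggregate to at most $C_{\delta,s}^2 2^{2r\delta}$. This explains the square on $C_{\delta,s}$ and the $2r\delta$ in the last line.

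For the $\alpha$-gain factor, the $i$-th step contributes $2^{\frac{2m_{s-i}}{6}\frac{1-\alpha}{2}\cdot 2^{-i}}$, and $2m_{s-i}2^{-i} = (r/2^{s-i})2^{-i} = r/2^s$ for every $i$. So each of the $s$ steps contributes the same exponent $\frac{r}{6\cdot 2^s}\frac{1-\alpha}{2}$, and the product yields the factor $s$ in the stated exponent. This is the one point needing care: the scale doubles while the power halves, so the product is constant rather than geometrically decaying.

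The main obstacle is the consistency of the objects across iterations, rather than the arithmetic. Each application of Proposition \ref{prop:PartialDecoupling} requires the hypothesis $r/2 \geq 2m$, which holds for $m=m_j$, $j\geq 1$. It also requires that the weights in $A_6(2m,\cdot)$ produced at one step are exactly the $w_{Q^{2m}_{\mu}}$-weights fed into the next. This holds by the definition of $A_p$, since the output of the proposition is again $A_p$ at the doubled scale with the same $Q^r_{\mu}$. The $\tilde D_6$ factors simply accumulate and are left untouched, to be handled later by bootstrapping the linear decoupling constant.
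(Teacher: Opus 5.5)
Your proposal is correct and is the argument the paper intends: apply Proposition \ref{prop:PartialDecoupling} with $p=6$ and $\kappa_6=\frac12$ a total of $s$ times, from $m=(r/2)/2^s$ up to $m=r/4$. At each step the $\alpha$-factor contributes the same exponent $\frac{r}{6\cdot 2^s}\frac{1-\alpha}{2}$, while the $2^{r\delta}$-losses and the $\log(r/m)\sim s$ factors compound geometrically into $C_{\delta,s}^2 2^{2r\delta}$. Your exponent bookkeeping and the check $2m_j\le r/2$ for $j\ge 1$ supply details the paper leaves implicit.
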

The dependence of $C_{\delta,s}$ on $s$ stems from the logarithmic factor in Proposition \ref{prop:PartialDecoupling} governed by the bi-dyadic difference of $r$ and $m$. In the present context this is $\sim s^c$.
Let $C(r,s,\delta) = C_{\delta,s}^2 2^{\frac{r}{6 \cdot 2^s} \big( \frac{1-\alpha}{2} \big) s + 2r \delta }$. 
We further process \eqref{eq:LocalMultiscale} applying H\"older's inequality
\begin{equation*}
A_6(r/2,Q^r_{\mu},F) \lesssim \big( \sum_{I \in \mathbb{I}_{r/2}} \| P_I F \|^2_{L^p_{\sharp}(w^I_{Q^r_{\mu}})} \big)^{\frac{1}{2}}
\end{equation*}
with $w^I_{Q^r_{\mu}} = \sum_{Q^m_{\mu} \in \mathcal{Q}^{m,r}_{\mu}(Q^r_{\mu})} w_{Q^m_{\mu}}$ for $I \in \mathbb{I}_{r/2}$. For $I \in \mathbb{I}_a$, $a = r/2^s$, $s \geq 1$, we define $w^I = \sum_{Q^r \in \mathcal{Q}^r} w^I_{3 Q^r_{\mu}}$.
We obtain a global multi-scale inequality from \eqref{eq:LocalMultiscale}, by raising it to the $p$th power, multiplying with $w(Q^r)$, summing over $Q^r \in \mathcal{Q}^r(\R^2)$, and applying H\"older's inequality
\begin{equation*}
\begin{split}
&\quad \| |F_1 F_2|^{\frac{1}{2}} \|_{L^6(w)} \\
&\leq C(r,s,\delta) \big( \sum_{I \in \mathbb{I}_{r/2}} \| P_I F \|^2_{L^6(w^I)} \big)^{\frac{1}{2} \cdot \frac{1}{2^{s}}} \ldots \big( \sum_{I \in \mathbb{I}_{(r/2)/2^s}} \| P_I F \|^2_{L^6(w^I)} \big)^{\frac{1}{2} \cdot \frac{1}{2}}.
\end{split}
\end{equation*}

In the next step we use parabolic rescaling to find for $a = r/2^s$, $s \geq 2$
\begin{equation*}
\big( \sum_{I \in \mathbb{I}_a} \| P_I F \|^2_{L^6(w^I)} \big)^{\frac{1}{2}} \lesssim \text{Dec}(6,r-2a,\alpha) \big( \sum_{I \in \mathbb{I}_{r/2}} \| P_I F \|^2_{L^6(w^{I,F})} \big)^{\frac{1}{2}}
\end{equation*}
with admissible weights $w^{I,F}$. Note that the induction hypothesis is not directly applicable to $L^6(w^I)$. However, after expanding $w^I$ into weighted translates, we can absorb the rapidly decaying coefficients into the admissible weights obtained from applying the induction hypothesis.

\subsection{Bootstrapping the global decoupling constant through the multi-scale inequality}
In the following we bootstrap the decoupling constant:
\begin{equation*}
\text{Dec}(6,r,\alpha) = C_\varepsilon 2^{r \big( \frac{1-\alpha}{6} \big) + r \varepsilon}.
\end{equation*}
By Cauchy-Schwarz we can allow for $\varepsilon = \frac{1}{2}$, $C_\varepsilon = 1$. The bootstrap argument will yield that we can choose $\varepsilon$ arbitrarily small. So, in the following we take the a priori bound $C_\varepsilon 2^{r \big( \frac{1-\alpha}{6} \big) + r \varepsilon}$ for granted and our goal is to decrease $\varepsilon$. 

By Lemma \ref{lem:LinearBilinearReduction}, it suffices to bound $\text{BilDec}(6,r,\alpha)$, which will be done through the multi-scale inequality in Proposition \ref{prop:MultiScale}. 
To this end, we choose $s$ minimally such that $s \geq \frac{4}{\varepsilon}$. If $2^s \geq R/16$, then $R=R(\varepsilon)$ and it suffices to choose $C_\varepsilon$ large enough.
Plugging in the a priori bound for $Dec$, we find
\begin{equation*}
\begin{split}
&\quad \big( \sum_{I \in \mathbb{I}_{(r/2)/2^s}} \| P_I F \|^2_{L^6(w^I)} \big)^{\frac{1}{2} \cdot \frac{1}{2}} \ldots \big( \sum_{I \in \mathbb{I}_{(r/2)/2}} \| P_I F \|^2_{L^6(w^I)} \big)^{\frac{1}{2} \cdot \frac{1}{2^s}} \\
&\leq 2^{(r-\frac{r}{2^s})(\frac{1-\alpha}{6}) \frac{1}{2}} 2^{(r-\frac{r}{2^{s-1}})(\frac{1-\alpha}{6})\frac{1}{4}} \ldots \\
&\quad 2^{(r-\frac{r}{2}2^{-s}) \frac{\varepsilon}{2}} 2^{(r-\frac{r}{2}2^{-(s-1)})\frac{\varepsilon}{4}} \ldots 2^{(r-r/2)\frac{\varepsilon}{2^s}} C_\varepsilon^{1-2^{-s}} \big( \sum_{I \in \mathbb{I}_{r/2}} \| P_I F \|^2_{L^6(w^{I,F})} \big)^{\frac{1}{2} (1-2^{-s})} \\
&\leq 2^{(\frac{1-\alpha}{6})r (1-2^{-s})} 2^{-\frac{r}{2} (\frac{1-\alpha}{6}) \frac{s}{2^s}} 2^{r(1-2^{-s})\varepsilon} 2^{-\frac{r}{2} \varepsilon \frac{s}{2^s}} C_\varepsilon^{1-2^{-s}} \big( \sum_{I \in \mathbb{I}_{r/2}} \| P_I F \|^2_{L^6(w^{I,F})} \big)^{\frac{1}{2} (1-2^{-s})}.
\end{split}
\end{equation*}
Consequently, we find for the global bilinear expression, taking into consideration the factor $2^{r/2^s}$ to jump start the decoupling iteration and $C_{\delta}^2 2^{\frac{r/2}{2^s} s \delta}$ from iterating as recorded in Proposition \ref{prop:MultiScale}:
\begin{equation*}
\begin{split}
\| |F_1 F_2|^{\frac{1}{2}} \|_{L^6(w)} &\leq 2^{\frac{r}{2^s}} C_{\delta,s}^2 2^{2 r \delta } 2^{\big( \frac{1-\alpha}{6} \big) r (1-2^{-s})} 2^{r (1-2^{-s}) \varepsilon} 2^{-\frac{r}{2} \varepsilon \frac{s}{2^s}} C_\varepsilon^{1-2^{-s}} \\
&\quad \quad \big( \sum_{I \in \mathbb{I}_{r/2}} \| P_I F \|^2_{L^6(w^{I,F})} \big)^{\frac{1}{2}}.
\end{split}
\end{equation*}
By the linear-to-bilinear reduction, we find for the global linear expression
\begin{equation*}
\begin{split}
\| F \|_{L^6(w)} &\leq C_{\delta'} 2^{r \delta'} C_\delta^2 2^{2 r \delta } (2^{\frac{r}{2^s}} 2^{-\frac{r}{2} \frac{s}{2^s} \varepsilon}) 2^{\big( \frac{1-\alpha}{6} \big) r (1-2^{-s})} 2^{r (1-2^{-s})\varepsilon} C_\varepsilon^{1- 2^{-s}} \\
&\quad \quad \big( \sum_{I \in \mathbb{I}_{r/2}} \| P_I F \|^2_{L^6(w^{I,F})} \big)^{\frac{1}{2}}.
\end{split}
\end{equation*}
By our choice of $s$ we find
\begin{equation*}
(2^{\frac{r}{2^s}} 2^{-\frac{r}{2} \frac{s}{2^s} \varepsilon}) \leq 2^{-r \frac{1}{2^s}}.
\end{equation*}
Choose $\delta = \frac{1}{2^{s+2}} = \delta(\varepsilon)$ and $\delta'= \frac{1}{2^{s+1}} = \delta'(\varepsilon)$ such that
\begin{equation*}
2^{r \delta'} 2^{2r \delta} (2^{\frac{r}{2^s}} 2^{-\frac{r}{2} \frac{s}{2^s} \varepsilon}) \leq 1. 
\end{equation*}
Hence, with $\varepsilon' := (1-2^{-s}) \varepsilon$ and $C_{\varepsilon'} := C_\varepsilon^{1-2^{-s}} C_{\delta'} C_{\delta,s}^2$ (recall $s=s(\varepsilon)$) we find
\begin{equation*}
\| F \|_{L^6(w)} \leq C_{\varepsilon'} 2^{\big( \frac{1-\alpha}{6} \big) r } 2^{r \varepsilon'}  \big( \sum_{I \in \mathbb{I}_{r/2}} \| P_I F \|^2_{L^6(w^{I,F})} \big)^{\frac{1}{2}}.
\end{equation*}
This finishes the proof of Theorem \ref{thm:FractalDecouplingTheorem}.
$\hfill \qed$

\section{Exponential sum estimates via weighted decoupling inequalities}
\label{section:ExpSum}
For the sake of completeness, we record a proof of the exponential sum estimate. We show the fractal $L^4$-estimate: 
\begin{proof}[Proof~of~Theorem~\ref{thm:FractalL4Strichartz}]
We let $(x_1,x_2) = (x,t)$. Let $\chi_{N^2}: \R \to \R$ be a Schwartz function  which satisfies $|\chi_{N^2}(z)| \gtrsim 1$ for $|z| \leq N^2$ and has compact Fourier support in $B_2(0,cN^{-2})$. By non-stationary phase it satisfies the decay estimate $|\chi_{N^2}(z)| \lesssim_m (1+N^{-2} |z|)^{-m}$ for any $m \geq 0$. Let 
\begin{equation*}
e^{it \Delta} P_N f = \frac{1}{2 \pi} \sum_{\substack{|k| \sim N,\\ k \in \Z}} e^{i(kx - t|k|^2)} a_k.
\end{equation*}
To estimate the exponential sum, we rescale $t \to N^2 t$, $x \to N x$ and use the periodicity in $x$ to find
\begin{equation}
\label{eq:ExponentialSumScaling}
\begin{split}
\| e^{it \Delta} P_N f \|^4_{L^4_{t,x}(\mathcal{I} \times \T)} &\lesssim N^{-4} \big\| \sum_{\substack{|k| \sim 1, \\ k \in \Z/N}} e^{i(kx+t|k|^2)} a_{N k} \big\|^4_{L^4_{t,x}(N^2 \mathcal{I} \times N^2 \T)} \\ &\lesssim N^{-4} \big\| \sum_{\substack{|k| \sim 1, \\ k \in \Z/N}} e^{i(kx + tk^2)} a_{N k} \chi_{N^2}(x,t)  \chi_{\bar{I}^{N^2}_{\mu}}(t) \big\|_{L^4_{t,x}(\R^2)}^4.
\end{split}
\end{equation}
The function
\begin{equation*}
F(x,t)= \sum_{\substack{|k| \sim 1,\\ k \in \Z/N}} e^{i(kx + tk^2)} a_{Nk} \chi_{N^2}(x) \chi_{N^2}(t)
\end{equation*}
 satisfies $\text{supp}(\hat{F}) \subseteq \mathcal{N}_{N^{-2}}(\mathbb{P}^1)$ and by hypothesis $w(x,t)= \chi_{\bar{I}^{N^2}_{\mu}}(t)$ is a weight which satisfies the assumptions of Theorem \ref{thm:FractalSquareFunction}. Consequently, we apply Theorem \ref{thm:FractalSquareFunction} and by \eqref{eq:SimplificationLocalAhlfors} we find for $F$ localized to $[0,N^2] \times [0,N^2] = Q^{N^2}$:
\begin{equation}
\label{eq:LocalSF}
\| F \|^4_{L^4_{t,x}(w)} \lesssim \log(N)^c N^{1-\alpha} \sup_{\nu \in [N^{-1},1]} \big\| \big( \sum_{J \in \mathbb{I}_{\log(N)}} |P_J F|^2 (\tilde{w}^{r,\nu,J}_{\mu})^{\frac{1}{2}} \big)^{\frac{1}{2}} \big\|_{L^4_{t,x}}^4.
\end{equation}
Let $n = \log(N)$ for brevity.
We apply Minkowski's inequality to find
\begin{equation*}
\big\| \big( \sum_{J \in \mathbb{I}_{n}} |P_J F|^2 (\tilde{w}^{r,\nu,J}_{\mu})^{\frac{1}{2}} \big)^{\frac{1}{2}} \big\|_{L^4_{t,x}} \leq \big[ \sum_J \big( \int |P_J F|^4 \tilde{w}^{r,\nu,J}_{\mu} dx dt \big)^{\frac{2}{4}} \big]^{\frac{1}{2}}.
\end{equation*}
Since by Fourier support assumptions we have
\begin{equation*}
P_J \big( \sum_{k \in \Z/N} e^{i (kx + tk^2)} a_k \chi_{N^2}(x) \big) \chi_{N^2}(t) = \sum_{k \in 2J} a_k P_J \big( e^{i (kx + tk^2)} a_k \chi_{N^2}(x) \big) \chi_{N^2}(t)
\end{equation*}
and $\tilde{w}^{r,\nu,J}_{\mu}$ is still of product form, the estimate
\begin{equation}
\label{eq:SingleTermEstimate} 
\int |P_J F|^4 \tilde{w}^{r,\nu,J}_{\mu} dx dt \lesssim \sum_{k \in 2J} |a_k|^4 N^2 N^{2 \alpha'}
\end{equation}
is straightforward by plugging in the convex combinations given by $\tilde{w}^{r,\nu,J}_{\mu}$. Taking \eqref{eq:ExponentialSumScaling}, \eqref{eq:LocalSF}, and \eqref{eq:SingleTermEstimate} together we find
\begin{equation*}
\| e^{it \Delta} P_N f \|_{L^4_{t,x}(\mathcal{I} \times \T)} \lesssim \log(N)^c N^{-\frac{1}{4}+\frac{\alpha'}{2} - \frac{\alpha}{4}} \big( \sum_{k \in \Z} |a_k|^2 \big)^{\frac{1}{2}},
\end{equation*}
as claimed.
\end{proof}
The proof of Theorem \ref{thm:FractalL6Strichartz} follows along the same argument and is omitted for this reason. 

\bigskip

\emph{Statement on AI use.}
At a later stage of preparing the manuscript, for checking references, consistency, and for the present formulation of Lemma 2.3, ChatGPT-6.1 Sol was used. All remaining mathematical content is entirely human generated. The author takes full responsibility for the contents of this paper.

\section*{Acknowledgements}
Financial support by the Humboldt Foundation through a Feodor-Lynen return fellowship is gratefully acknowledged.

\end{document}